\newcommand{\R}{{\mathbb{R}}}
\setlist[enumerate,1]{label=\textit{\alph*)}}
\newtheorem{theorem}{Theorem}[section]
\newtheorem*{assumption}{Assumptions}
\newtheorem*{rem}{Remark}
\newtheorem{cor}{Corollary}[section]
\newtheorem{lemma}{Lemma}[section]
\theoremstyle{definition}
\theoremstyle{remark}
\journal{Journal of Mathematical Analysis and Applications}
\begin{document}
	
	\begin{frontmatter}
		
		
		
		\title{Optimal Control of Medical Drug in a Nonlocal Model of Solid Tumor Growth
		}
		
		
		\author[inst1]{Bouhamidi Abderrahman}
		
		\affiliation[inst1]{organization={Université du littoral côte d'opale},
			addressline={50 rue Ferdinand Buisson, CS 80699}, 
			city={Calais},
			postcode={62100}, 
			country={France}}
		
		\author[inst2]{El Harraki Imad}
		\author[inst1,inst2]{Melouani Yassine}
		
		\affiliation[inst2]{organization={Ecole nationale supérieure des mines de
				Rabat},
			addressline={Av. Hadj Ahmed Cherkaoui - B.P. : 753}, 
			city={Rabat},
			postcode={10000}, 
			country={Morocco}}
		
		\begin{abstract}
			This paper presents a mathematical framework for optimizing drug delivery in cancer treatment using a nonlocal model of solid tumor growth. We present a coupled system of partial differential equations that incorporate long-range cellular interactions through integral terms and drug-induced cell death. The model accounts for spatial heterogeneity in both tumor cell density and drug concentration while capturing the complex dynamics of drug resistance development. We first establish the well-posedness of the coupled system by proving the existence and uniqueness of a solution under appropriate regularity conditions. The optimal control problem is then formulated to minimize tumor size while accounting for drug toxicity constraints. Using variational methods, we derive the necessary optimality conditions and characterize the optimal control through an adjoint system. Theoretical results can help to design effective chemotherapy schedules that balance treatment efficacy with adverse effects.
		\end{abstract}

		\begin{keyword}
			Optimal control \sep Nonlocal Tumor Growth Model \sep Partial Differential
			Equations
		\end{keyword}
		
	\end{frontmatter}
	
	
	\section*{Introduction}
	Cancer remains one of the leading causes of death worldwide, which requires
continued research on tumor growth dynamics and treatment strategies.
	Mathematical modeling has become an essential tool for understanding complex biological processes involved and optimizing therapeutic interventions\cite{gompertz1825xxiv, von1957quantitative}.
	The evolution of mathematical approaches to tumor growth reflects the increasing
complexity of our understanding. Early models focused primarily on describing volume dynamics, with the pioneering work employing the Gompertz
	model \cite{gompertz1825xxiv} and von Bertalanffy model
	\cite{von1957quantitative} establishing fundamental frameworks for
	characterizing growth kinetics. Although these approaches provided valuable
information, the growing recognition of tumor complexity led to more sophisticated
frameworks. In particular, the development of spatial models that incorporate the heterogeneous nature of the tumor microenvironment marked a significant advance.
	The work of Byrne and Chaplain \cite{byrne1996growth} demonstrated the importance of considering spatial aspects, particularly angiogenesis, in tumor development.
	Building on these spatial frameworks, successive modelling efforts have
	included genetic and epigenetic factors to better reflect current biological
	understanding. Altrock et al. \cite{altrock2015mathematics} developed
sophisticated approaches that account for clonal evolution within tumors,
providing critical insights into heterogeneity and treatment resistance
mechanisms. This understanding was further refined by Leschiera et al.
	\cite{leschiera2022mathematical}, whose work examining the relationship between
	cancer cell heterogeneity and immune response highlighted the importance of
	considering multiple biological scales. Although these advances significantly improved tumor growth modeling, considering only local cellular interactions overlooks interactions across various scales and distances within the tumor environment. Such limitations may result in incomplete models that do not capture the full complexity of tumor behavior, which is crucial for understanding growth patterns and effectively optimizing treatment strategies \cite{deisboeck2009collective}.
	This motivated the development of nonlocal modeling approaches capable of
	representing long-range cellular interactions. The groundbreaking work of
	Painter et al. \cite{painter2015nonlocal} demonstrated that nonlocal terms can
	effectively describe pattern formation in cancer growth. Building on this
	foundation, Buttenschön et al. \cite{buttenschon2018space} established a framework for nonlocal advection models in biological systems. The contributions of Armstrong et al. \cite{armstrong2006continuum} and
	Gerisch and Chaplain \cite{gerisch2008mathematical} further demonstrated the
	power of integro-differential equations in capturing complex tumor growth
	dynamics.
	Parallel to these developments in tumor growth modeling, the challenge of
	optimizing treatment protocols has led to an increasing application of control
	theory, particularly for chemotherapy administration. Beginning with Swan's\cite{swan1990role} demonstration of the applicability of control theory to chemotherapy scheduling. Martin \cite{martin1992optimal} extended these methods to address the crucial challenge of drug resistance, while De Pillis et al.
	\cite{de2008optimal} showed their utility in immunotherapy contexts. More recently, Li and You \cite{li2025optimal} extended this approach by developing optimal control strategies for reaction-diffusion models that explicitly incorporate tumor-immune interactions. The applicability of these optimal control approaches extends beyond cancer treatment to other biological systems, as demonstrated by Wu et al. \cite{wu2025analysis} in their analysis of hierarchical age-structured population models in polluted environments.

In this paper, we establish a mathematical framework for analyzing nonlocal tumor growth models with chemotherapy, establishing the existence and uniqueness of a solution. Then, we develop an optimal control theory for this system, deriving necessary optimality conditions through an adjoint formulation that accounts for the nonlocal effects. Furthermore, we obtain a characterization of the optimal control strategy that provides a guidance for drug dosing protocols. 
	
The remainder of this paper is organized as follows. In Section~\ref{sec1}, we present the nonlocal coupled system and the optimal control framework. In Section~\ref{sec2}, the notation and assumptions needed for our analysis are outlined. Section~\ref{sec3} establishes the well-posedness of the coupled system and proves the existence of an optimal control. Section~\ref{sec4} is devoted to deriving necessary optimality conditions and introducing an adjoint system to characterize the optimal control. In addition to the theoretical analysis, we present numerical experiments in Section~\ref{sec:5} to validate our theoretical findings and show the efficacy of the optimal control strategy. 
	\section{The Nonlocal Mathematical Model}\label{sec1}
	\subsection{Biological motivation}
	Cancer treatment outcomes are significantly influenced by the complex
	interactions between tumor cells and their microenvironment. These interactions
	occur not only through direct cell-to-cell contact but also via long-range
	signaling mechanisms that extend beyond immediate neighboring cells. Tumor cells
	interact through multiple mechanisms, including direct adhesion through
	membrane-bound molecules such as cadherins, paracrine signaling via diffusible
	factors, and mechanical forces transmitted through the extracellular matrix
	(ECM). As demonstrated by Armstrong et al. \cite{armstrong2006continuum} and
	Gerisch and Chaplain \cite{gerisch2008mathematical}, these interactions may be
	attractive or repulsive, significantly influencing both local tumor structure
	and overall growth patterns.
	
	The response to treatment represents another critical aspect of tumor dynamics.
	Blood-borne chemotherapy agents must navigate several barriers to reach tumor
	cells \cite{trachette1999mathematical}. These include diffusion through tissue,
	binding to target cells, and variable drug sensitivity across the tumor
	population. The effectiveness of treatment depends critically on both drug
	distribution and cellular response mechanisms, which can vary significantly
	across the tumor mass. Spatial heterogeneity plays a crucial role in treatment
	outcomes. Tumors exhibit significant variations in cell density, drug
	concentration, and microenvironmental conditions throughout their structure.
	This heterogeneity, coupled with the complex network of cellular interactions,
	can lead to varying treatment responses across different regions of the tumor.
	
	Our mathematical framework captures these biological complexities through a
	nonlocal velocity field $\mathrm{V}[w_p]$. Following the approaches developed by
	Painter et al. \cite{painter2015nonlocal} and Buttenschön et al.
	\cite{buttenschon2018space}, we assume that cells can detect and respond to
	their environment within a finite range. The velocity field $\mathrm{V}[w_p]$
	integrates the effects of adhesive forces between cells, chemical gradients, and
	mechanical stresses over a spatial domain determined by a finite sensing radius.
	The model incorporates drug response through a coupled reaction-diffusion system
	based on the work of Trachette and Byrne \cite{trachette1999mathematical}. This
	system accounts for the spatial distribution of drug concentration,
	cell-dependent drug uptake, and the development of resistance. 
	\subsection{Model Formulation}
	In this subsection, we will propose a mathematical model that describes the
	spatiotemporal dynamics of tumor growth under the influence of chemotherapy,
	incorporating nonlocal effects of cell-cell interactions. Let us denote by
	$I$ the drug concentration in the tumor vasculature depending on the time
	variable $t$. The value $I(t)$ is a control variable, constrained to lie within
	a biologically and clinically relevant set of admissible controls. We usually, consider the set of admissible controls as:
	\begin{equation}
		\label{Uad}
		U_{ad} = \Bigl\{u \in L^{\infty}(0,T) : 0 \leq u(t) \leq M_{tol} \text{ a.e. } t \in
		[0,T]\Bigr\},
	\end{equation}
	where $M_{tol}$ represents the maximum tolerable drug concentration, and $T$ is any positive value. This constraint
	set reflects physiological limitations and safety considerations in drug
	administration.
	
	The model consists of a coupled system of partial differential equations:
	\begin{equation}
		\label{system0}
		\left\{
		\begin{array}{lll}
			I \in U_{ad},\quad &&(a)\\
			\vspace{0.15cm}
			\dfrac{\partial p}{\partial t}(t,x)+\operatorname{div}
			\Bigl(\mathrm{V}\left[w_p (t,x)\right] 
			p(t,x)\Bigr)=F\bigl(p(t,x)\bigr)-C\bigl(d(t,x), p(t,x)\bigr), &t \in
			[0,T],\;x\in \mathbb{R}^N,&(b) \\
			\vspace{0.15cm}
			\dfrac{\partial d}{\partial t}(t,x)-\operatorname{div} \Bigl( D(x) \nabla
			d(t,x)\Bigr)=\Gamma\Bigl( I(t),d(t,x)\Bigr) , &t \in
			[0,T],\;x\in \mathbb{R}^N, &(c)\\
			p(0,x) = p_0(x), \quad d(0,x) = d_0(x),& x \in \mathbb{R}^N.&(d)
		\end{array}\right.
	\end{equation}
	Here, $p(t,x)$ represents the tumor cell density at position $x$ and time $t$,
	while $d(t,x)$ denotes the drug concentration. The function $\mathrm{V}\left[w_p
	\right]$ is a nonlocal velocity field, where $w_p$ is a convolution functional
	defined as follows:
	\begin{equation}
		\label{nonlocalterm}
		w_p(t,x)=\int_{\Omega} K(x,y)p(t,y) d y,
	\end{equation}
	where $K$ is a convolution kernel capturing long-range cell interactions, and
	$\Omega\subseteq \R^{N}$ is the domain of nonlocal sensibility, where
	$N\geq 1$ the spatial dimension, which is practically $N=1,2$. The mathematical model incorporates key biological and pharmacological mechanisms through carefully selected terms that capture the essential dynamics of tumor growth and drug treatment. The cell population dynamics are governed by two primary processes: cell proliferation, represented by the term $F(p)$, which typically follows either logistic or Gompertz growth patterns to account for resource limitations and competition among cancer cells, and drug-induced cell death, modeled by $C(d,p)$, which characterizes the interaction between local drug concentration and tumor cell density. The transport and metabolism of the therapeutic agent are described through two main mechanisms: First, the spatially-dependent diffusion coefficient $D(x)$ characterizes the heterogeneous drug transport through tumor tissue, reflecting variations in tissue density, extracellular matrix composition, and interstitial pressure. Second, a nonlinear function $\Gamma(I,d)$ describes the drug dynamics, incorporating various physiological processes including blood-tissue exchange, metabolic degradation, and clearance mechanisms. 
	
From a biological and physical perspective, both the tumor cell density $p(t,x)$ and drug concentration $d(t,x)$ must remain non-negative throughout the temporal and spatial domains. This fundamental constraint is not automatically guaranteed by the mathematical structure of system \eqref{system0}. This non-negativity preservation is crucial not only for the model's physical interpretation but also for the mathematical analysis of the optimal control problem, particularly when establishing the existence of optimal solutions. In Section \ref{existenceofOC}, we establish the non-negativity of the solutions $p$ and $d$ for all time $t\in[0,T]$, starting with non-negative initial conditions. 
	\subsection{Optimal Control Framework}
	To optimize the drug delivery protocol while accounting for both treatment
	efficacy and toxicity constraints, we formulate an optimal control problem. The
	objective is to minimize a cost functional that balances tumor reduction with
	drug-related adverse effects. Specifically, we consider the cost functional:
	\begin{equation}\label{Jcost}
	J(I) = \int_0^T \left[  \int_{\R^N} \alpha \,p(t,x) dx + \beta I(t)^2
	\right] dt + \gamma \int_{\R^N} p(T,x) dx.
\end{equation}
	This functional combines three key clinical considerations over the treatment
	period $[0,T]$. The first term measures the cumulative tumor density through the
	$L^1$ norm, weighted by a parameter $\alpha > 0$. The second term, weighted by
	$\beta > 0$, penalizes high drug concentrations, reflecting the need to minimize
	systemic toxicity. The terminal term, weighted by $\gamma > 0$, accounts for the
	final tumor size, which is important for the treatment effectiveness at the
	conclusion of therapy \cite{swan1990role}. Following the approaches similar to
	those developed by Ledzewicz and Schättler \cite{schattler2015optimal}, the
	optimal control problem thus becomes the minimization of \eqref{Jcost} subject to the
	control constraint system \eqref{system0}. Namely:
		\begin{equation}\label{OC_problem}
		\left\{
		\begin{array}{cc}
			\vspace{0.2cm}
	\min_{I\in U_{ad}} J(I),\\
	\textit{subject to}\quad \eqref{system0}. 
	\end{array}\right.
	\end{equation}
	\section{Notations and Assumptions}\label{sec2}
	In this section, we will first present some notations and assumptions which will be used throughout this paper. Let define the following spaces:
	
	Let $L^{\infty}(\R^N)$ be the space of essentially bounded measurable functions on $\R^N$, equipped with the norm:
	\[
	\|f\|_{L^{\infty}(\R^N)} = \operatorname{ess\,sup}_{x \in \R^N} |f(x)|.
	\]
	Let $C_{b}(\R^N)$ be he space of continuous bounded functions on $\R^N$, with the uniform norm:
	\[
	\|f\|_{C_{b}(\R^N)} = \sup_{x \in \R^N} |f(x)| .
	\]
	Let $C^1_b(\R^N)$ be the space of continuously differentiable bounded functions on $\R^N$ with bounded derivatives, normed by:
	\[
	\|f\|_{C^1_b(\R^N)} = \|f\|_{C(\R^N)} + \|\nabla f\|_{C(\R^N)}.
	\]
	For a Banach space $(X,\|\,.\,\|_X)$, let $C(0,T; X)$ denote the space of continuous functions from $[0,T]$ to $X$, with the norm:
	\[
	\|f\|_{C(0,T; X)} = \sup_{t \in [0,T]} \|f(t)\|_X  .
	\]
	Let $L^1(0,T; X)$ be the space of Bochner integrable functions from $[0,T]$ to $X$, with the norm:
	\[
	\|f\|_{L^1(0,T; X)} = \int_0^T \|f(t)\|_X dt.
	\]
	Let $W^{1,1}(\R^N)$ be the Sobolev space defined as:
	\[
	W^{1,1}(\R^N) = \{u \in L^{1}(\R^N) : D^\alpha u \in L^{1}(\R^N) \quad\text{for all}\quad |\alpha| \leq 1\}.
	\] 
	Furthermore, we state the following assumptions:
		\begin{assumption}
		\label{ass:model}
		We assume the following:
		\begin{enumerate}[label=\textbf{(A\arabic*)}]
			\item $D\in C_{b}^1(\mathbb{R}^N)$, such that $\forall x \in  \mathbb{R}^N $, $D_{0} \leq D(x) \leq D_{1}, \;\text{with} \;
			D_{0},D_{1}>0,$
			\item $d_0\in H^2(\mathbb{R}^N)\cap L^\infty(\mathbb{R}^N)$,
			\item $\Gamma:\mathbb{R} \times \mathbb{R}\to \R$ is Lipschitz function w.r.t second variable,
			\item There exists $g\in L^2(\mathbb{R}^N)\cap L^\infty(\mathbb{R}^N)$ such
			that:\\ $\forall (t,x)\in [0,T]\times \R^{N}$, $|\Gamma(I(t)),d(t,x))| \leq g(x)(1+|I(t)|+|d(t,x)|)$,
			\item $p_0\in L^{\infty}(\mathbb{R}^N)\cap W^{1,1}(\mathbb{R}^N)$,
			\item $F:\R \to \R$ and $C:\mathbb{R} \times \mathbb{R}\to \R$ are Lipschitz functions (w.r.t second variable for the function $C$),
			\item There exists $h\in L^1(\mathbb{R}^N)\cap L^\infty(\mathbb{R}^N)$ such
			that:\\ $\forall (t,x)\in [0,T]\times \R^{N}$, $|F(p(t,x))-C(d(t,x),p(t,x))| \leq h(x)(1+|p(t,x)|)$,
			\item $K\in C^2_b(\R^N\times\R^N)\cap L^1(\R^N\times \R^N)$, \quad(given in \eqref{nonlocalterm}),
			\item $\mathrm{V}\in C_b^2(\mathbb{R};\mathbb{R}^N).$
			\item $F \in C_{b}^1(\mathbb{R})$, $C \in C_{b}^1(\mathbb{R} \times \mathbb{R})$ and $\Gamma \in C_{b}^1(\mathbb{R} \times \mathbb{R})$.
			\item $K(x,y) = 0 \text{ for all } x \in \partial\Omega \text{ and } y \in \Omega$,
			\item $\mathrm{V}(0) = 0$ and $\mathrm{V}^{\prime}(0) = 0.$
		\end{enumerate}
	\end{assumption}
	\begin{rem}
		In fact, the assumption \textbf{(A10)} will be used only in Section~\ref{sec4} in order to ensure the derivability of the right hand side of the system \eqref{system1}.   
	\end{rem}
	\section{Existence and uniquness}\label{sec3}
	In this section, we establish the well-posedness of the coupled system
	\eqref{system0}. Our strategy consists of solving sequentially the parabolic
	equation for $d$ and then, with $d$ fixed, applying the theory developed in
	\cite{keimer2017existence} to solve the nonlocal equation for $p$. After
	establishing the well-posedness of our nonlocal system, we prove the existence
	of an optimal control for our problem.
	\subsection{Well-posedness of a week solution}
	Let us consider the coupled system $(b)-(c)-(d)$ in \eqref{system0} as:
		\begin{equation}
		\label{system1}
		\left\{
		\begin{array}{lll}
			\vspace{0.15cm}
			\dfrac{\partial p}{\partial t}(t,x)+\operatorname{div}
			\Bigl(\mathrm{V}\left[w_p (t,x)\right] 
			p(t,x)\Bigr)=F\bigl(p(t,x)\bigr)-C\bigl(d(t,x), p(t,x)\bigr), &t \in
			[0,T],\;x\in \mathbb{R}^N,&(b) \\
			\vspace{0.15cm}
			\dfrac{\partial d}{\partial t}(t,x)-\operatorname{div} \Bigl( D(x) \nabla
			d(t,x)\Bigr)=\Gamma\Bigl( I(t),d(t,x)\Bigr) , &t \in
			[0,T],\;x\in \mathbb{R}^N, &(c)\\
			p(0,x) = p_0(x), \quad d(0,x) = d_0(x),& x \in \mathbb{R}^N.&(d)
		\end{array}\right.
	\end{equation}
		\begin{theorem}[Well-posedness of the coupled system]
		\label{thm:wellposed}
		 Let the assumptions \textbf{(A1)-(A9)} hold. Then for $I\in L^{\infty}(\R^{N})$ fixed, the  coupled system \eqref{system1} has a unique solution $(p, d)$ in 
		$C(0,T; L^1(\mathbb{R}^N)) \times C(0,T; H^1(\mathbb{R}^N))$. Furthermore,
		we have the following estimates:
		\begin{equation}\label{L1est}
			\|p(t,\cdot)\|_{L^1(\mathbb{R}^N)} \leq C_{1}(\|p_0\|_{L^1(\mathbb{R}^N)} +
		T),
		\end{equation}
		\begin{equation}\label{Linfest}
			\|p(t,\cdot)\|_{L^\infty(\mathbb{R}^N)} \leq C_{2}(\|p_0\|_{L^\infty(\mathbb{R}^N)} +
			T),
		\end{equation}
		\begin{equation}\label{dl1}
			\left\| d(t,\cdot)\right\| _{L^2(\mathbb{R}^N)}  \leq C_3 (1+\left\| d_0\right\|
			_{L^2(\mathbb{R}^N)}+ \left\|I\right\|_{L^2(0,T)}),
		\end{equation}
		and
			\begin{equation}\label{dlinf}
			\|d\|_{L^\infty(0,T;L^\infty(\R^N))} \leq C_{4}\left(1+ \|d_0\|_{L^\infty(\mathbb{R}^N)}+
			\|I\|_{L^\infty(0,T)}\right) .
		\end{equation}
		where $C_{1}$, $C_{2}$, $C_{3}$, and $C_{4}$ are positive constants.
	\end{theorem}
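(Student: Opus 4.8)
The plan is to exploit the triangular (one-way coupled) structure of \eqref{system1}: equation $(c)$ for the drug concentration $d$ does not involve $p$, so I would solve it first, and only then treat $(b)$ as a nonlocal balance law for $p$ in which $d$ enters merely as a known, bounded datum through the sink term $C(d,p)$. \textbf{Step 1 (the parabolic equation for $d$).} By \textbf{(A1)} the operator $A d := -\operatorname{div}(D(x)\nabla d)$ is uniformly elliptic with bounded $C^1$ coefficients, hence self-adjoint and the generator of an analytic $C_0$-semigroup $(S(t))_{t\ge 0}$ on $L^2(\R^N)$. I would write $(c)$ in mild form $d(t)=S(t)d_0+\int_0^t S(t-s)\,\Gamma(I(s),d(s))\,ds$ and, using the Lipschitz continuity of $\Gamma$ in its second argument \textbf{(A3)}, run a Banach fixed-point argument in $C(0,T;L^2(\R^N))$ to obtain a unique mild solution; since $d_0\in H^2$ by \textbf{(A2)}, parabolic smoothing upgrades this to $d\in C(0,T;H^1(\R^N))$. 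The $L^2$ bound \eqref{dl1} follows by testing $(c)$ with $d$, integrating by parts, discarding the nonnegative ellipticity contribution (using $D\ge D_0>0$), controlling the right-hand side via the growth bound \textbf{(A4)} $|\Gamma(I,d)|\le g(x)(1+|I|+|d|)$ together with Cauchy--Schwarz, and invoking Grönwall. For the $L^\infty$ bound \eqref{dlinf} I would either propagate an $L^q$ estimate and let $q\to\infty$, or compare with a spatially constant supersolution dictated by the growth structure of $\Gamma$; in either route Grönwall in time yields the stated dependence on $\|d_0\|_{L^\infty}$ and $\|I\|_{L^\infty(0,T)}$.

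\textbf{Step 2 (the nonlocal transport equation for $p$).} With $d$ now fixed, the source $F(p)-C(d(t,x),p)$ is Lipschitz in $p$ \textbf{(A6)} and obeys the sublinear bound \textbf{(A7)}, while the velocity $\mathrm{V}[w_p]$ is assembled from the kernel $K\in C^2_b\cap L^1$ \textbf{(A8)} and the map $\mathrm{V}\in C^2_b$ \textbf{(A9)}. I would check that these hypotheses place \eqref{system1}$(b)$ squarely within the framework of Keimer--Pflug \cite{keimer2017existence}: since $K\in C_b^2$, the convolution $p\mapsto w_p=\int_\Omega K(x,y)p(t,y)\,dy$ is bounded and Lipschitz from $L^1$ into $C^1_b$, so $x\mapsto \mathrm{V}[w_p(t,x)]$ is $C^1_b$ with divergence bounded uniformly in $t$, and the characteristic ODEs are globally well-posed. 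Their fixed-point scheme on the nonlocal velocity then delivers a unique $p\in C(0,T;L^1(\R^N))$. Integrating $(b)$ over $\R^N$ annihilates the divergence term and leaves $\tfrac{d}{dt}\|p(t)\|_{L^1}\le \|h\|_{L^1}+\|h\|_{L^\infty}\|p(t)\|_{L^1}$ by \textbf{(A7)}, whence \eqref{L1est} via Grönwall. For \eqref{Linfest} I would integrate $(b)$ along the characteristics $\dot X=\mathrm{V}[w_p(t,X)]$, so that $p$ satisfies the ODE $\dot p=-p\,\operatorname{div}\mathrm{V}[w_p]+F(p)-C(d,p)$; the divergence is bounded by the Step-2 regularity and the reaction is sublinear, so Grönwall along characteristics furnishes the $L^\infty$ bound \eqref{Linfest}.

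The main obstacle is Step 2: unlike a classical transport equation, the velocity here depends on the unknown through the nonlocal average $w_p$, so the characteristics themselves depend on the full solution. The crux is therefore to establish that $p\mapsto \mathrm{V}[w_p]$ is Lipschitz from $C(0,T;L^1(\R^N))$ into a space of $C^1$ velocity fields uniformly in $t$ — precisely the property that allows the Keimer--Pflug contraction to close, and exactly what \textbf{(A8)}--\textbf{(A9)} are designed to guarantee. By contrast, the decoupled structure renders existence one-directional and spares us a simultaneous fixed point between $p$ and $d$, so that the estimates \eqref{L1est}--\eqref{dlinf} reduce to routine Grönwall consequences of the growth hypotheses \textbf{(A4)} and \textbf{(A7)}.
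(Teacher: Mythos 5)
Your proposal follows essentially the same route as the paper: exploit the one\--way coupling to solve the parabolic equation for $d$ first by semigroup/mild\--solution methods with Gr\"onwall (the paper uses the contraction properties $\|e^{-t\mathcal{A}}\|_{L^2\to L^2}\leq 1$ and $\|e^{-t\mathcal{A}}\|_{L^\infty\to L^\infty}\leq 1$ on the Duhamel formula rather than your energy/$L^q$ variants, but this is immaterial), and then treat $(b)$ as a nonlocal balance law via the Keimer et al.\ characteristics and Banach fixed\--point framework, with all estimates closed by Gr\"onwall. The one step to tighten is your derivation of \eqref{L1est}: integrating $(b)$ over $\R^N$ only controls $\int_{\R^N} p\,dx$, which equals $\|p(t,\cdot)\|_{L^1(\R^N)}$ only once non\--negativity of $p$ is known --- and that is established later (Lemma~\ref{lemma1}) --- so at this stage you should instead take absolute values in the characteristics representation and change variables along the flow, as the paper does, or invoke a Kato\--type inequality.
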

	\begin{proof}

		\begin{itemize}
		\item We start by proving the well-posedness of the parabolic equation (6)-(c).\\ Let $\mathcal{A}d=-\operatorname{div}(D(x)\nabla d)$ with domain $D(\mathcal{A}) = H^2(\mathbb{R}^N)$. Under Assumption \textbf{(A1)}, this operator is uniformly elliptic. By Theorem 2.5.1 in Zheng \cite{zheng2004nonlinear}, under Assumptions \textbf{(A1)-(A3)}, there exists a unique solution 
		$
		d \in C(0,T; H^1(\mathbb{R}^N))
		$
		which can be represented as follows:
		\begin{equation}\label{duhamel}
				d(t,x) = e^{-t\mathcal{A}}d_0 + \int_0^t e^{-(t-s)\mathcal{A}}\Gamma(I(s),d(s,\cdot)))ds.
		\end{equation}
	For the $L^2$ estimate, we first note that $\{e^{-t\mathcal{A}}\}_{t\geq 0}$ is a strongly continuous semigroup on $L^2(\mathbb{R}^N)$ with the property:
		\[
		\|e^{-t\mathcal{A}}\|_{L^2 \to L^2} \leq 1,
		\]
		due to the uniform ellipticity of $\mathcal{A}$. By taking the $L^2$ norm of the mild solution we get:
		\begin{align*}
			\|d(t,\cdot)\|_{L^2(\R^N)} &\leq \|e^{-t\mathcal{A}}d_0\|_{L^2(\R^N)} + \int_0^t \|e^{-(t-s)\mathcal{A}}\Gamma(I(s),d(s,\cdot))\|_{L^2(\R^N)} ds\\
			&\leq \|d_0\|_{L^2(\R^N)} + \int_0^t \|\Gamma(I(s),d(s,\cdot))\|_{L^2(\R^N)} ds
		\end{align*}
			By Assumption \textbf{(A4)}, we have:
		\begin{equation*}
			\|\Gamma(I(s),d(s,\cdot))\|_{L^2(\R^N)} 
			\leq M_1(1 + \|I\|_{L^\infty(0,T)} + \|d(s,\cdot)\|_{L^2(\R^N)})
		\end{equation*}
		By inserting this into our previous inequality we have:
		\begin{align*}
			\|d(t, \cdot)\|_{L^2(\R^N)} &\leq \|d_0\|_{L^2(\R^N)} + M_1\int_0^t (1 + \|I\|_{L^\infty(0,T)} + \|d(s,\cdot)\|_{L^2(\R^N)}) ds\\
			&\leq \|d_0\|_{L^2(\R^N)} + M_1t(1 + \|I\|_{L^\infty(0,T)}) + M_1\int_0^t \|d(s,\cdot)\|_{L^2(\R^N)} ds
		\end{align*}
		Therefore, applying Gronwall's inequality leads to:
		\[
	\|d(t,\cdot)\|_{L^2(\R^N)} \leq (\|d_0\|_{L^2(\R^N)} + M_1T(1 + \|I\|_{L^\infty(0,T)}))e^{M_1T}
		\]
		This gives the estimate \eqref{dl1}. For the $L^\infty$ estimate, we use that the heat semigroup satisfies the maximum principle:
		\[
		\|e^{-t\mathcal{A}}\|_{L^\infty \to L^\infty} \leq 1,
		\]
		Then:
		\begin{align*}
\|d(t,\cdot)\|_{L^\infty(\R^N)} &\leq \|d_0\|_{L^\infty(\R^N)} + \int_0^t \|\Gamma(I(s),d(s,\cdot))\|_{L^\infty(\R^N)} ds\\
			&\leq \|d_0\|_{L^\infty(\R^N)} + M_2\int_0^t (1 + \|I\|_{L^\infty(0,T)} + \|d(s,\cdot)\|_{L^\infty(\R^N)}) ds
		\end{align*}
	 Therefore, applying Gronwall's inequality yields the estimate \eqref{dlinf}.
			\item Now, for the well-posedness of the nonlocal equation \eqref{system1}-(b), let $d \in C(0,T; H^1(\mathbb{R}^N))\cap 
		L^\infty((0,T);L^\infty(\mathbb{R}^N)) $ be fixed, then we will use Banach's fixed theorem and Keimer et al.\cite{keimer2018existence} to establish the desired result. In fact,
		Let $M:= 2\|p_0\|_{L^1(\R^N)}$, we define the following closed ball subset:
		\begin{equation}
			E = \{p\in C([0,T];L^1(\mathbb{R}^N)): \|p(t,\cdot)\|_{L^1(\R^N)} \leq M\},
		\end{equation}
		of the Banach space $C([0,T];L^1(\mathbb{R}^N))$.
		Furthermore, we consider the map $\Phi: E \to C([0,T];L^1(\mathbb{R}^N))$
		defined by:
\begin{equation}\label{eq:Phi}
			\Phi(p)(t,x) = p_0(\mathrm{X}[t,x](0))\det(D_x\mathrm{X}[t,x](0)) + \int_0^t
			[F(p) - C(d,p)](s,\mathrm{X}[t,x](s))\det(D \mathrm{X}[t,x](s))d s,
		\end{equation}
		where $\mathrm{X}[t,x]$ solves the characteristic equation:
\begin{equation}\label{eq:char}
			\begin{cases}
				\dfrac{\partial \mathrm{X}[t,x](s)}{\partial s} = \mathrm{V}\left[w_p(s,
				\mathrm{X}[t,x](s))\right], & s\in [0,T],\\
				\mathrm{X}[t,x](t) = x,
			\end{cases}
		\end{equation}
		and $\mathrm{V}$ is given in \eqref{system0}. By using the characteristics, for $p\in E$, and for
		any $t\in [0,T]$ we obtain:
		\begin{align*}
			\|\Phi(p)(t,\cdot)\|_{L^1(\R^N)} &\leq
\int_{\mathbb{R}^N}\Bigl|p_0(\mathrm{X}[t,x](0))\Bigr|\det(D_x\mathrm{X}[t,x](0))dx\\
			&+ \int_{\mathbb{R}^N}\int_0^t\Bigl|[F(p) - C(d,p)](\tau,\mathrm{X}[t,x](\tau))\Bigr|\det(D_x\mathrm{X}[t,x](\tau))d\tau d x.
		\end{align*}
		Now, the change of variables $y = \mathrm{X}[t,x](0)$ for the first term and $y =
		\mathrm{X}[t,x](\tau)$ for the second leads to:
		\begin{equation*}
			\int_{\mathbb{R}^N}|p_0(\mathrm{X}[t,x](0))|\det(D_x\mathrm{X}[t,x](0))dx =
			\|p_0\|_{L^1(\R^N)}.
		\end{equation*}
		For the second term by using the assumption $\mathbf{(A7)}$, we obtain:
		\begin{align*}
			\|\Phi(p)(t,\cdot)\|_{L^1(\R^N)} \leq&\|p_0\|_{L^1(\R^N)} +
			T\int_{\mathbb{R}^N}h(x)(1+|p(t,x)|)d x\\
			\leq& \|p_0\|_{L^1(\R^N)} + C_1T(1 + M),
		\end{align*}
		where $C_1=\max\left\lbrace \|h\|_{L^1(\R^N)}, \|h\|_{L^\infty(\R^N)}
		\right\rbrace$.
		By choosing $T$ small enough so that $C_1T(1 + M) \leq
		\|p_0\|_{L^1(\R^N)}$, we get for $t\in [0,T]$ the following:
		\begin{equation*}
			\|\Phi(p)(t,\cdot)\|_{L^1(\R^N)} \leq 2\|p_0\|_{L^1(\R^N)} = M.
		\end{equation*}
		Then $\Phi(E)\subseteq E$. As $F$ and $C$ are Lipschitz functions with the Lipschitz constants $L_F$ and $L_C$ respectively. Then for $p_1,p_2 \in E$ and for $t\in [0,T]$,we have:
		{\footnotesize
		\begin{align*}
			\|\Phi(p_1)(t,\cdot) - \Phi(p_2)(t,\cdot)\|_{L^1(\R^N)} &=
			\int_{\mathbb{R}^N}\int_0^t|F(p_1) - F(p_2) - (C(d,p_1) -
			C(d,p_2))|\cdot|\det(D_x\mathrm{X}[t,x](\tau))|d\tau dx\\
			&\leq (L_F+L_C)\int_0^t\|p_1(\tau,\cdot) - p_2(\tau,\cdot)\|_{L^1(\R^N)}d\tau.
		\end{align*}}
		Therefore:
		\begin{equation*}
			\|\Phi(p_1) - \Phi(p_2)\|_{C([0,T];L^1(\R^N))} \leq (L_F+L_C)T\|p_1 -
			p_2\|_{C([0,T];L^1(\R^N))}.
		\end{equation*}
		Then, by choosing $T^* = \min\left\lbrace T,\dfrac{1}{2(L_F+L_C)}\right\rbrace
		$, $\Phi$ is a contraction on $E$ for all $t$ in $ [0,T^*]$. By using Banach's fixed
		point theorem, there exists a unique $p^*\in E$ such that $\Phi(p^*)=p^*$. 
		
		Consequently, using assumptions \textbf{(A1)-(A9)} and according to Theorem 3.24 in Keimer et al. \cite{keimer2018existence}, we
		establish a unique solution for the nonlocal equation \eqref{system1}
		in $C([0,T^*];L^1(\mathbb{R}^N))$, furthermore, the existence and uniqueness of the solution holds for any
		$T\geq 0$.
		
		To obtain the estimates for $p$, we use the characteristics representation as
		follows, for all $t \geq t_{0}$ where $t_{0}\in [0,T]$:
		\begin{equation}\label{characterstics_rep}
			p(t,x) = p(t_{0},\mathrm{X}[t,x](t_{0}))\det(D_x\mathrm{X}[t,x](t_{0})) + \int_{t_{0}}^t [F(p) -
			C(d,p)](\tau,\mathrm{X}[t,x](\tau))\det(D_x\mathrm{X}[t,x](\tau))d\tau.
		\end{equation}
		By taking the $L^1$ norm and using the change of variables $y = \mathrm{X}[t,x](0)$ for the
		first term and $y = \mathrm{X}[t,x](\tau)$ for the second we get the following:
		\begin{align*}
			\|p(t,\cdot)\|_{L^1(\mathbb{R}^N)} &\leq \|p_0\|_{L^1(\mathbb{R}^N)} + \int_0^t\|F(p(\tau)) -
			C(d(\tau),p(\tau))\|_{L^1(\mathbb{R}^N)}d\tau.
		\end{align*}
		By using $\textbf{(A7)}$, we get:
		\begin{equation*}
			\|p(t,\cdot)\|_{L^1(\R^N)} \leq \|p_0\|_{L^1(\R^N)} + C_1\int_0^t(1 +
			\|p(\tau)\|_{L^1(\R^N)})d\tau.
		\end{equation*}
		Therefore, by using Gronwall's inequality, we get the desired $L^{1}$ estimate
		\eqref{L1est}. Similarly, to get the $L^\infty$ estimate we have:
		\begin{equation*}
			|p(t,x)| \leq |p_0(\xi[t,x](0))|\cdot|\det(D_x\mathrm{X}[t,x](0))|+ \int_0^t
			|F(p) -
			C(d,p)|(\tau,\mathrm{X}[t,x](\tau))\cdot|\det(D_x\mathrm{X}[t,x](\tau))|d\tau.
		\end{equation*}
		From Lemma 3.3 in \cite{keimer2018existence}, we have:
		\begin{equation}\label{det_positif}
			\det(D_x\mathrm{X}[t,x](\tau)) = \exp\left(\int_\tau^t-
			\operatorname{div}\Bigl( \mathrm{V}\bigl( w(s,\mathrm{X}[t,x](s))\bigr) \Bigr)
			ds\right).
		\end{equation}
		According to assumption $\textbf{(A9)}$, we have:
		\begin{equation*}
			|\det(D_x\mathrm{X}[t,x](\tau))| \leq \exp(LT),
		\end{equation*}
		where $L$ depends on bounds of derivatives of $\mathrm{V}$. Therefore:
		\begin{equation*}
			|p(t,x)| \leq \|p_0\|_{L^\infty(\mathbb{R}^N)}\exp(LT) + \int_0^t \exp(LT)|F(p)
			- C(d,p)|(\tau,\mathrm{X}[t,x](\tau))d\tau,
		\end{equation*}
		by taking the supremum over $x\in\mathbb{R}^N$, and using Gronwall's inequality,
		we get the desired $L^{\infty}$ estimate \eqref{Linfest}.
			\end{itemize}
	\end{proof}
	\subsection{Existence of an Optimal Control}\label{existenceofOC}
	Having established the well-posedness of our coupled system, we now turn our
	attention to the existence of an optimal control. We consider the following optimal control problem consisting on the minimization of the cost functional \eqref{Jcost} subject to the system \eqref{system0}.
	
	Before announcing our main result in this section, we need to establish the
	non-negativity of $p$ as given in the following lemma:
	\begin{lemma}[Non-negativity of the solutions]\label{lemma1}
		For $I \in \mathcal{U}_{ad}$ fixed, let the assumptions $\textbf{(A1)-(A9)}$ hold. If the initial conditions are non-negative $d_{0}\geq 0$ and $p_{0}\geq 0$, then for any $(t,x)\in [0,T]\times \mathbb{R}^N$, $p(t,x)$ and $d(t,x)$ are non-negative. 
	\end{lemma}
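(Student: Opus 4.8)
The plan is to exploit the one-way coupling of \eqref{system1}: the drug equation (c) involves only $I$ and $d$, so I would first establish $d\geq 0$, and then feed this into the characteristic representation \eqref{characterstics_rep} to propagate non-negativity to $p$. Both steps rest on a single structural fact that I would record explicitly at the outset, since it is not automatic from the Lipschitz hypotheses alone: the reaction terms are \emph{quasi-positive}, i.e. $\Gamma(I(t),0)\geq 0$ and $F(0)-C(d(t,x),0)\geq 0$ for a.e. $(t,x)$. These are consistent with the biological forms underlying the model (logistic or Gompertz $F$ with $F(0)=0$, density-proportional kill $C$ with $C(d,0)=0$, and a non-negative drug influx $\Gamma(\cdot,0)$), and they are precisely what prevents the solutions from crossing zero.

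For the non-negativity of $d$ I would use a negative-part energy estimate. Writing $d^-=\max(0,-d)$, I test \eqref{system1}-(c) with $-d^-$ and integrate over $\R^N$. The diffusion term yields, after integration by parts (the boundary contributions at infinity vanish since $d(t,\cdot)\in H^1(\R^N)$ by Theorem~\ref{thm:wellposed}), the nonnegative quantity $\int_{\R^N}D(x)|\nabla d^-|^2\,dx$, while the time term gives $\tfrac12\tfrac{d}{dt}\|d^-\|_{L^2(\R^N)}^2$. Splitting $\Gamma(I,d)=[\Gamma(I,d)-\Gamma(I,0)]+\Gamma(I,0)$ and using the Lipschitz bound \textbf{(A3)} on the first bracket together with $\Gamma(I,0)\geq 0$ and $d^-\geq 0$ on the second gives $\tfrac{d}{dt}\|d^-\|_{L^2(\R^N)}^2\leq 2L_\Gamma\|d^-\|_{L^2(\R^N)}^2$. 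Since $d_0\geq 0$ forces $d^-(0,\cdot)=0$, Gronwall yields $d^-\equiv 0$, hence $d\geq 0$. The same conclusion follows from the Duhamel formula \eqref{duhamel}, as $\{e^{-t\mathcal{A}}\}$ is positivity-preserving by the maximum principle, but the energy argument is self-contained.

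For $p$ I would argue along characteristics. Fixing $(t,x)$ and setting $P(s)=p(s,\mathrm X[t,x](s))$, the non-conservative form of \eqref{system1}-(b) gives the ODE $\tfrac{d}{ds}P(s)=[F(P)-C(d,P)](s,\mathrm X[t,x](s))-P(s)\operatorname{div}\mathrm V[w_p](s,\mathrm X[t,x](s))$ with $P(0)=p_0(\mathrm X[t,x](0))\geq 0$. Applying the negative-part trick in $s$, on $\{P<0\}$ I split $F(P)-C(d,P)=[F(P)-F(0)]-[C(d,P)-C(d,0)]+[F(0)-C(d,0)]$: the first two brackets are controlled by $(L_F+L_C)P^-$ via \textbf{(A6)}, the last is $\geq 0$ by quasi-positivity and thus contributes with a favorable sign against $-P^-$, and the transport term is handled using the boundedness of $\operatorname{div}\mathrm V[w_p]$ that follows from \textbf{(A9)} exactly as in \eqref{det_positif}. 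This produces $\tfrac{d}{ds}(P^-)^2\leq c\,(P^-)^2$ with $P^-(0)=0$, so $P\geq 0$ along every characteristic, i.e. $p\geq 0$; since $p\in C(0,T;L^1(\R^N))\cap L^\infty$ by Theorem~\ref{thm:wellposed}, this pointwise-a.e. statement is meaningful. Equivalently, one reads the conclusion off \eqref{characterstics_rep}, whose leading term is nonnegative (the determinant is the positive exponential \eqref{det_positif} and $p_0\geq 0$) and whose integrand has its sign controlled by the same splitting.

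The main obstacle is the reaction and source terms, not the transport or diffusion, which are sign-neutral; the step that genuinely forces non-negativity is the quasi-positivity at the zero level, and without $\Gamma(I,0)\geq 0$ and $F(0)\geq C(d,0)$ the growth and Lipschitz hypotheses \textbf{(A3)}--\textbf{(A7)} alone do not rule out sign changes. The remaining points are technical: justifying the integration by parts and the vanishing of boundary terms on the unbounded domain $\R^N$ (using the $H^1$ and $L^1\cap L^\infty$ regularity already established), and the chain rule for $\|d^-\|_{L^2(\R^N)}^2$ and $(P^-)^2$, which is routine once $d^-$ and $P^-$ are verified to be admissible test functions.
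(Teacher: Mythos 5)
Your proposal is correct, but it proves a slightly strengthened statement by a genuinely different route: you add the quasi-positivity hypotheses $\Gamma(I,0)\geq 0$ and $F(0)-C(d,0)\geq 0$ (which, as you note, are not among \textbf{(A1)--(A9)}) and then run standard negative-part arguments --- an $L^2$ energy estimate for $d$ tested against $-d^-$, and a Gronwall bound for $(P^-)^2$ along characteristics for $p$. The paper instead tries to avoid any sign condition on the reaction terms at the zero level: for $d$ it argues by contradiction from the maximal non-negativity time $t^*$, using the Duhamel formula \eqref{duhamel}, the positivity of the semigroup, and the lower bound $\Gamma(I,d)\geq -g(x)(1+|I|)-L|d|$ from \textbf{(A4)}; for $p$ it integrates over the set $\{p<0\}$ in \eqref{characterstics_rep}, bounds $\|p^-(t)\|_{L^1(\R^N)}$ by $C't$ via \textbf{(A7)}, and iterates over subintervals $[kt^*,(k+1)t^*]$. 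Your diagnosis of where the difficulty actually sits is the sharper one: the paper's closing steps infer $p^-\equiv 0$ from $\|p^-(t)\|_{L^1(\R^N)}\leq C't^*<\varepsilon$ for a fixed small $t^*$, and $d^-\equiv 0$ from $\|d^-(t,\cdot)\|_{L^\infty(\R^N)}\leq C_1(t-t^*)e^{C_1(t-t^*)}$, but neither bound forces the negative part to vanish, because the inhomogeneous term in the Gronwall inequality does not disappear. Your extra hypothesis is precisely what removes that inhomogeneous term: the splittings $\Gamma(I,d)=[\Gamma(I,d)-\Gamma(I,0)]+\Gamma(I,0)$ and $F(p)-C(d,p)=[F(p)-F(0)]-[C(d,p)-C(d,0)]+[F(0)-C(d,0)]$ leave, on the negative set, only terms proportional to the negative part plus a contribution of favorable sign, so the resulting Gronwall inequality is homogeneous and closes. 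In short, your approach buys a complete argument under an explicitly stated, biologically natural structural condition, whereas the paper's approach aims for the same conclusion under \textbf{(A1)--(A9)} alone; since \textbf{(A4)} and \textbf{(A7)} control only the magnitude and not the sign of the reaction at zero, a condition of the type you impose appears to be genuinely needed, and you should keep it as an explicit assumption rather than trying to remove it.
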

		\begin{proof}
			Let us first establish the non-negativity of $d$. Let $\mathcal{A}d = -\operatorname{div}(D(x)\nabla d)$. By the parabolic maximum principle, the semigroup ${e^{-t\mathcal{A}}}_{t\geq 0}$ is positivity-preserving: if $f \geq 0$, then $e^{-t\mathcal{A}}f \geq 0$ for all $t \geq 0$.
			We will proceed by contradiction. Let $t^*$ be the supremum of all times $t \in [0,T]$ such that $d(s,x) \geq 0$ for all $s \in [0,t]$ and $x \in \mathbb{R}^N$. Since $d_0 \geq 0$ and $d$ is continuous, $t^*$ is well-defined and positive. Suppose by contradiction that $t^* < T$. By continuity of $d$, we have $d(t^*,x) \geq 0$ for all $x \in \mathbb{R}^N$, and by maximality of $t^*$, there exist $\varepsilon > 0$ and $x^* \in \mathbb{R}^N$ such that $d(t,x^*) < 0$ for some $t \in [t^*, t^* + \varepsilon]$.
			For $t \in [t^*, t^* + \varepsilon]$, we use the variation of constants formula:
			\begin{equation}
				d(t,x) = e^{-(t-t^*)\mathcal{A}}d(t^*,\cdot) + \int_{t^*}^t e^{-(t-s)\mathcal{A}}\Gamma(I(s),d(s,\cdot))ds.
			\end{equation}
			Let $d^-(t,\cdot) = \max(-d(t,\cdot),0)$ denote the negative part of $d$. Since $\Gamma$ is Lipschitz in the second variable with constant $L$, we have:
			\begin{align*}
				\Gamma(I(s),d(s,\cdot)) &= \Gamma(I(s),0) + \int_0^{d(s,\cdot)} \phi(I(s),\tau)d\tau
			\end{align*}
			where $|\phi(I(s),\tau)| \leq L$ is the derivative of $\Gamma$ with respect to the second variable (which exists almost everywhere by Rademacher's theorem since $\Gamma$ is Lipschitz).
			By the growth condition we have \(|\Gamma(I(s),0)| \leq g(x)(1 + |I(s)|)\), therefore:
			\[\Gamma(I(s),d(s,x)) \geq -g(x)(1 + |I(s)|) - L|d(s,x)|.\]
			By using the positivity-preserving property of the semigroup and the non-negativity of $d(t^*,\cdot)$ we get:
			\begin{align*}
				d(t,x) &\geq -\int_{t^*}^t e^{-(t-s)\mathcal{A}}[g(\cdot)(1 + |I(s)|) + L|d(s,\cdot)|]ds\\
				&\geq -C_1\int_{t^*}^t (1 + \|d(s,\cdot)\|_{L^\infty(\R^N)})ds,
			\end{align*}
			where $C$ depends on $\|g\|_{L^\infty(\R^N)}$, $\|I\|_{L^\infty(0,T)}$, and $L$.
			Therefore:
			\begin{equation}\label{d-}
				\|d^-(t,\cdot)\|_{L^\infty(\R^N)}\leq C_1\int_{t^*}^t (1 + \|d(s,\cdot)\|_{L^\infty(\R^N)})ds.
			\end{equation}
			Now, in the region where $d$ is negative (i.e., for $t \in [t^*, t^* + \varepsilon]$), we have
			\(d(t,x) = -d^-(t,x)\),
			and therefore:
			\[\|d(s,\cdot)\|_{L^\infty(\R^N)} = \|d^-(s,\cdot)\|_{L^\infty(\R^N)}.\]
			By Substituting this back into \eqref{d-} and applying Gronwall's inequality we get the following:
			\begin{equation}
				\|d^-(t,\cdot)\|_{L^\infty(\R^N)} \leq C_1(t-t^*)\exp(C_1(t-t^*)).
			\end{equation}
				For sufficiently small $\varepsilon > 0$, this implies $d^-(t,\cdot) = 0$ for $t \in [t^*, t^* + \varepsilon]$, contradicting our assumption that $d$ becomes strictly negative after $t^*$. Therefore, $t^*  = T$ and $d$ remains non-negative on $[0,T]$.
			
			Now we will prove the non-negativity of $p$. For $t\in [0,T]$, let $p^+(t,\cdot)$ and $p^-(t,\cdot)$ denote the usual positive and negative parts of $p(t,\cdot)$, respectively. We have:
		$p^+(t,\cdot) := \max\left( p(t,\cdot),0\right)\geq 0$, and $p^-(t,\cdot) := \max\left( -p(t,\cdot),0\right) =- \min\left( p(t,\cdot),0\right) \geq 0$ and $p(t,\cdot)=p^{+}(t,\cdot)-p^{-}(t,\cdot)$.
		For any fixed $t^*\in [0,T]$ we
		have:
		\begin{equation*}
		\left\|p^-(t^*,\cdot) \right\|_{L^{1}(\R^{N})} =	\int_{\mathbb{R}^N} |p^-(t^*,x)|dx = -\int_{\{p(t^*,\cdot)<0\}} p(t^{*},x)dx.
		\end{equation*}
		By using the characteristic formula \eqref{characterstics_rep} and integrating on $\{p(t^*,\cdot)<0\}$, we get the
		following:
		\begin{align*}
			\left\|p^-(t^*,\cdot) \right\|_{L^{1}(\R^{N})} &= -\int_{\{p(t^*,\cdot)<0\}}
			p_0(\mathrm{X}[t^*,x](0))\det(D_x\mathrm{X}[t^*,x](0))dx\\
			&- \int_{\{p(t^*,\cdot)<0\}} \int_0^{t^*} [F(p) -
			C(d,p)](\tau,\mathrm{X}[t^*,x](\tau))\det(D_x\mathrm{X}[t^*,x](\tau))d\tau dx.
		\end{align*}
			Since $p_0\geq 0$ and according to \eqref{det_positif}, the determinant is non negative, then:
	$$
	\displaystyle	-\int_{\{p(t^*,\cdot)<0\}}
		p_0(\mathrm{X}[t^*,x](0))\det(D_x\mathrm{X}[t^*,x](0))dx \leq 0.
		$$
	By using the change of variables  $y = \mathrm{X}[t^*,x](\tau)$, the assumption (\textbf{A6}), and the estimate \eqref{L1est} respectively, we obtain: 
		 	$$
		 \left\|p^-(t^*,\cdot) \right\|_{L^{1}(\R^{N})}\leq \int_{\R^N} \int_0^{t^*}\Bigl| [F(p) -
		 	C(d,p)](\tau,\mathrm{X}[t^*,x](\tau))\Bigr|\det(D_x\mathrm{X}[t^*,x](\tau))d\tau dx.
		 	$$
	Using the assumption (\textbf{A6}) and the estimate \eqref{L1est}, we get:
		  $$\left\|p^-(t^*,\cdot) \right\|_{L^{1}(\R^{N})}\leq C_1t^{*}\left( 1 + C_{2}(\|p_0\|_{L^1(\mathbb{R}^N)} +
		 	T)\right) \leq  C^{\prime}t^{*},
		$$
		 where $C_1$, $C_1$, and $C^{\prime}$ are positive constants.
		  Let $\varepsilon >0 $, by choosing $t^*>0$ small enough such	that $C^{\prime}t^{*} < \varepsilon$, we get
		$
		\|p^-(t)\|_{L^1(\R^N)} \leq C^{\prime}t^{*} < \varepsilon
		$, for all $t\in [0,t^*]$.
		It follows that $p^-(t,\cdot)= 0$ a.e on $\R^{N}$, for all $t\in [0,t^*]$. Finally, $p(t,\cdot)\geq 0$ a.e on $\R^{N}$, for all $t\in [0,t^*]$. The continuity of $p$ ensures that $p(t,\cdot)\geq 0$  on $\R^{N}$, for all $t\in [0,t^*]$.

		Now, for all $t\in[t^*,2t^*]$, by using \eqref{characterstics_rep} we get similarly:
			\begin{equation}
			p(t,x) = p(t^{*},\mathrm{X}[t,x](t^*)) + \int_{t^{*}}^{t} [F(p) -
			C(d,p)](\tau,\mathrm{X}[t,x](\tau))\det(D_x\mathrm{X}[t,x](\tau))d\tau.
		\end{equation}
		Since $p(t^{^*},x)\geq 0$ for all $x\in \R^{N}$, by following the same arguments as in the interval $[0,t^{*}]$, we get:
		\begin{equation*}
			\|p^-(t,\cdot)\|_{L^1(\R^N)} \leq C^{\prime}(t-t^*)\leq C^{\prime}t^* < \varepsilon.
		\end{equation*}
		Therefore $p\geq 0$ on $[t^*,2t^*]$. Let $L$ be the greatest integer in $\mathbb{N}$ such that $Lt^*\leq T$. We
		can iterate the argument on each interval $[kt^*,(k+1)t^*]$ using the fact that; $p(kt^*)\geq 0$ as initial condition at each step for
		$k=0,1,\ldots,L-1$, and on $[Lt^{*},T]$ using $p(Lt^*)\geq 0$ as initial condition. Therefore we obtain the desired result $p(t,\cdot)\geq 0$ on $\R^{N}$ for all $t \in [0,T]$.
	\end{proof}
	
	Now, we can establish the result of existence of an optimal control:
	\begin{theorem}[Existence of an Optimal Control]
		\label{thm:optimal_control}
		Let the assumptions $\textbf{(A1)-(A9)}$ hold. Then there exists an optimal
		control $I^* \in \mathcal{U}_{ad}$ that minimizes the cost functional J.
	\end{theorem}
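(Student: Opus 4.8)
The plan is to argue by the direct method of the calculus of variations. First I would note that by Lemma~\ref{lemma1} the state $p$ remains non-negative, so every term of the cost functional \eqref{Jcost} is non-negative (recall $\alpha,\beta,\gamma>0$); hence $m := \inf_{I\in U_{ad}} J(I)\geq 0$ is finite and a minimizing sequence $(I_n)_n \subset U_{ad}$ with $J(I_n)\to m$ exists. To each $I_n$ Theorem~\ref{thm:wellposed} associates a unique state $(p_n,d_n)$. Since $0\leq I_n \leq M_{tol}$ a.e., the sequence $(I_n)$ is bounded in $L^2(0,T)$, and because the constraint set \eqref{Uad} is convex and strongly closed in $L^2(0,T)$ it is weakly sequentially compact; I would therefore extract a subsequence (not relabeled) with $I_n \rightharpoonup I^*$ weakly in $L^2(0,T)$ and $I^*\in U_{ad}$.

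Next I would pass to the limit in the states. The uniform bound $\|I_n\|_{L^\infty(0,T)}\leq M_{tol}$ fed into the estimates \eqref{L1est}--\eqref{dlinf} yields bounds on $(p_n,d_n)$ independent of $n$ in $C(0,T;L^1\cap L^\infty)\times C(0,T;H^1\cap L^\infty)$. For $d_n$ I would complement the uniform $H^1$ bound with a bound on $\partial_t d_n$ in $L^2(0,T;H^{-1})$ read off from equation \eqref{system1}-(c) and apply an Aubin--Lions type compactness argument; because the spatial domain $\R^N$ is unbounded this only gives strong convergence $d_n\to d^*$ in $C(0,T;L^2_{loc})$, which I would upgrade to global strong convergence using the integrability of $g$ from \textbf{(A4)} to furnish the required tightness. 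For $p_n$ I would exploit the smoothing of the nonlocal kernel: since $K\in C^2_b$, the functionals $w_{p_n}$ and the associated velocities $\mathrm{V}[w_{p_n}]$ are equicontinuous, so the characteristic flows \eqref{eq:char} converge, and through the characteristic representation formula $p_n\to p^*$ strongly in $C(0,T;L^1)$ along a further subsequence. Passing to the limit in the mild/weak formulation of \eqref{system1} then identifies $(p^*,d^*)$ as the state generated by $I^*$.

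Finally I would invoke weak lower semicontinuity. The penalty $\int_0^T \beta I(t)^2\,dt$ is convex and continuous on $L^2(0,T)$, hence weakly lower semicontinuous, giving $\int_0^T \beta (I^*)^2 \leq \liminf_n \int_0^T \beta I_n^2$; the strong convergence of $p_n$ lets me pass to the limit in $\int_0^T\!\int_{\R^N}\alpha\,p\,dx\,dt$ and in the terminal term $\gamma\int_{\R^N} p(T,\cdot)\,dx$. Combining these, $J(I^*)\leq \liminf_n J(I_n)=m$, while $I^*\in U_{ad}$ forces $J(I^*)\geq m$; therefore $J(I^*)=m$ and $I^*$ is an optimal control.

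I expect the main obstacle to be the limit passage through the nonlinear terms when the control converges only weakly. The convective term $\operatorname{div}(\mathrm{V}[w_{p_n}]\,p_n)$ and the reaction term $\Gamma(I_n,d_n)$ are nonlinear, so weak convergence of $(I_n)$ is by itself insufficient and one genuinely needs strong compactness of the states. The affine-in-$|I|$ growth bound \textbf{(A4)} is what keeps the $I$-dependence of $\Gamma$ tractable, but securing strong convergence on the full space $\R^N$, where Rellich--Kondrachov type embeddings fail, via a tightness argument, is the most delicate point of the proof.
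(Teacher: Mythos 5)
Your proposal follows the same direct-method skeleton as the paper — non-negativity of $p$ via Lemma~\ref{lemma1} to bound $J$ below, a minimizing sequence, uniform state estimates from Theorem~\ref{thm:wellposed}, extraction of a convergent subsequence of controls in the weakly compact convex set $U_{ad}$, identification of the limit state, and lower semicontinuity of $J$ — but it diverges genuinely at the identification step. The paper extracts only weak$^*$ limits of $p^n$ in $L^\infty(0,T;L^\infty(\R^N))$ and weak limits of $d^n$ in $L^2(0,T;L^2(\R^N))$, and then passes to the limit in $\mathrm{V}[w_{p^n}]p^n$, $F(p^n)-C(d^n,p^n)$ and $\Gamma(I^n,d^n)$ by invoking the continuity of the nonlinearities together with weak$^*$ convergence; you instead insist on upgrading to strong convergence of the states (Aubin--Lions plus a tightness argument for $d_n$ on the unbounded domain, and convergence of the characteristic flows induced by the equicontinuous velocities $\mathrm{V}[w_{p_n}]$ for $p_n$) before touching the nonlinear terms. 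Your route is more laborious — the tightness argument on $\R^N$ and the Gronwall-type stability of the characteristic representation both require real work that your sketch only outlines — but it is the more robust one: composition with a nonlinear function does not in general commute with weak or weak$^*$ limits, so strong compactness of the states is exactly what makes the limit passage in $F(p^n)$, $C(d^n,p^n)$ and $\Gamma(I^n,d^n)$ unobjectionable, whereas the paper's argument leans on the compactifying effect of the kernel $K$ for the nonlocal term and is terser about the pointwise nonlinearities. You have correctly identified this limit passage as the delicate point of the whole proof.
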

	\begin{proof}
		First, we establish that the cost functional $J$ is bounded below. Due to the
		non-negativity of $p$ (see Lemma~\ref{lemma1}), we have $\displaystyle J(I) \geq 0$ for all $I \in
		\mathcal{U}_{ad}$. This ensures that $\inf_{I \in \mathcal{U}_{ad}} J(I)$ is
		well-defined.
		Let $(I^n)_n \subset \mathcal{U}_{ad}$ be a minimizing sequence, i.e.,
		\begin{equation}
			\lim_{n \to \infty} J(I^n) = \inf_{I \in \mathcal{U}_{ad}} J(I).
		\end{equation}
		For each $n$, let $(p^n, d^n)$ be the  solution to system
		\eqref{system1} corresponding to the control $I^n$.
		From estimates \eqref{L1est},\eqref{Linfest} and \eqref{dl1} in Theorem~\ref{thm:wellposed}, we obtain the uniform bounds:
		\begin{equation}\label{Majorations_pn_dn}
			\left\| p^n\right\| _{	L^\infty(0,T;L^1(\mathbb{R}^N))}\leq C_{1},\;
			\left\| p^n\right\| _{	L^\infty(0,T;L^{\infty}(\mathbb{R}^N))} \leq C_{2},\;
			\left\| d^n\right\|
			_{L^2(0,T;L^2(\mathbb{R}^N))} \leq C_3. 
		\end{equation}
	where $C_{1}, C_{2}$, and $C_{3}$ are some positive constants, and we have also	$\left\| I^n\right\|
	_{L^\infty(0,T)} \leq M_{tol}$ where $M_{tol}$ is defined in \eqref{Uad}. By the Banach-Alaoglu theorem \cite{brezis2011functional}, there exist
		subsequences (still denoted by the same notations) and limit functions such that:
		\begin{align}
			I^n &\rightharpoonup^* I^* \quad \text{weakly}^* \text{ in } L^\infty(0,T),
			\label{weak*I}\\
			p^n &\rightharpoonup^* p^* \quad \text{weakly}^* \text{ in }
			L^\infty(0,T;L^\infty(\mathbb{R}^N)),\label{weak*p} \\
			d^n &\rightharpoonup d^* \quad \text{weakly in }
			L^2(0,T;L^2(\mathbb{R}^N)).\label{weak*d}
		\end{align}
		Now we will show that $(p^*,d^*)$ is the solution of the system \eqref{system1}-(b)-(c)-(d) for $I=I^*$. Indeed, to show that $p^{*}$ and $d^{*}$ satisfy the initial conditions (d)-\eqref{system1}, we consider a test function  $\phi \in C^1_c\left( [-T,T]\times\mathbb{R}^N\right) $, then integrating by part with respect to $t$:
		\begin{equation*}
			\int_0^T \int_{\mathbb{R}^N} \partial_t d^n(t,x) \phi(t,x)\; dxdt = \int_{\R^{N}}  d^n(0,x) \phi(0,x)dx	-\int_0^T \int_{\mathbb{R}^N}  d^n(t,x) \partial_t\phi(t,x)\; dxdt.
			\end{equation*}
	According to weak convergence \eqref{weak*d} and passing to the limit we get:
		\begin{equation*}
		\int_0^T \int_{\mathbb{R}^N} \partial_t d^*(t,x) \phi(t,x)\; dxdt = \lim_{n\to \infty}\int_{\R^{N}}  d^{*}(0,x) \phi(0,x)dx	-\int_0^T \int_{\mathbb{R}^N}  d^*(t,x) \partial_t\phi(t,x)\; dxdt.
	\end{equation*}
		Integrating again by part the second integral in the right hand side of last relation, we get:
			\begin{equation*}
		\lim_{n\to \infty}\int_{\R^{N}} \left[  d^{n}(0,x)-d^{*}(0,x) \right]\phi(0,x)dx=0.
		\end{equation*}
		As $d^{n}(0,x)=d_0(x)$, for all $n\geq 0$, then
		$$\int_{\R^{N}} \left[  d_{0}(x)-d^{*}(0,x) \right]\phi(0,x)dx=0,$$
		which shows that $d^{*}$ satisfies the initial condition. By using the same argument we have also $p^{*}$ satisfying the initial condition (d)-\eqref{system1}. Now it remains to show that $(p^*,d^*)$ is the solution of the system \eqref{system1}-(b)-(c).
		For all $\phi \in C^1_c\left( [0,T]\times\mathbb{R}^N\right) $, from equation (c)-\eqref{system1}, we have:
	\begin{equation*}
			\int_0^T \int_{\mathbb{R}^N} \partial_t d^n \phi\; dxdt + \int_0^T
			\int_{\mathbb{R}^N} D(x)\nabla d^n\cdot\nabla\phi\; dxdt = \int_0^T
			\int_{\mathbb{R}^N} \Gamma(I^{n},d^{n})\phi \; dxdt
		\end{equation*}
		Using the weak* convergence \eqref{weak*I}, the weak convergence \eqref{weak*d}, and the continuity of $\Gamma$ we obtain the limit equation:
	\begin{equation}\label{weak_conv_d}
	\displaystyle	\int_0^T \int_{\mathbb{R}^N} \partial_t d^* \phi \; dxdt + \int_0^T
			\int_{\mathbb{R}^N} D(x)\nabla d^*\cdot\nabla\phi \; dxdt = \int_0^T
			\int_{\mathbb{R}^N} \Gamma(I^{*},d^{*})\phi \; dxdt.
		\end{equation}
		For the nonlocal equation (b)-\eqref{system1}, for all $\phi \in C^1_c\left( [0,T]\times\mathbb{R}^N\right)$, we have: \begin{equation}\label{weak_conv_p1}
				\int_0^T \int_{\mathbb{R}^N}\partial_t p^n\phi \; dxdt \rightarrow \int_0^T
			\int_{\mathbb{R}^N} \partial_tp^*\phi \; dxdt.
		\end{equation}
		
	For the nonlocal term $w_{p^n}$, by using Fubini's theorem we have:
	\begin{align*}
		\int_0^T \int_{\R^{N}} w_{p^n}(t,x)\phi(t,x)dxdt =& \int_0^T \int_{\R^{N}} \int_{\Omega} K(x,y)p^n(t,y)\phi(t,x)dydxdt,\\
		=& \int_0^T \int_{\Omega} p^n(t,y)( \int_{\R^{N}} K(x,y)\phi(t,x)dx)dydt.
	\end{align*}
Let	define $\displaystyle \psi(t,y) =\int_{\R^{N}} K(x,y)\phi(t,x)dx$, by assumption \textbf{(A8)}, we have
	$$\int_0^T \int_{\R^N} |\psi(t,y)|dydt < \infty .$$
		Therefore $\psi \in L^1(0,T;L^{1}(\Omega))$, and by weak* convergence of $p^n$ in $L^\infty(0,T;L^\infty(\Omega))$ (which follows from the weak* convergence in $L^\infty(0,T;L^\infty(\mathbb{R}^N))$). It follows that:
$$	\int_0^T \int_{\Omega} p^n(t,y)\psi(t,y)dydt \rightarrow \int_0^T \int_{\Omega} p^*(t,y)\psi(t,y)dydt,$$
	which shows that $w_{p^n} \rightharpoonup^* w_{p^*}$ weakly* in $L^\infty(0,T;L^\infty(\mathbb{R}^N))$.	From assumption \textbf{(A9)}, the continuity of $\mathrm{V}$ ensures that $\mathrm{V}[w_{p^n}] \rightharpoonup^* \mathrm{V}[w_{p^*}]$ weakly* in $L^\infty(0,T;L^\infty(\mathbb{R}^N))$.
	Now, we have:
		\begin{align}
			&\int_0^T \int_{\mathbb{R}^N} p^n\mathrm{V}[w_{p^n}]\cdot\nabla\phi \; dxdt -
			\int_0^T \int_{\mathbb{R}^N} p^*\mathrm{V}[w_{p^*}]\cdot\nabla\phi \; dxdt \\
			&= \int_0^T \int_{\mathbb{R}^N} (p^n-p^*)\mathrm{V}[w_{p^n}]\cdot\nabla\phi \;
			dxdt + \int_0^T \int_{\mathbb{R}^N}
			p^*(\mathrm{V}[w_{p^n}]-V[w_{p^*}])\cdot\nabla\phi \; dxdt.
		\end{align}
		For the first term we have
		$\mathrm{V}[w_{p^n}]\cdot\nabla\phi \in L^1(0,T;L^1(\mathbb{R}^N))$ since
		$V$ is bounded, hence by the weakly convergence* \eqref{weak*p} we get:
		\begin{equation}
			\int_0^T \int_{\mathbb{R}^N} (p^n-p^*)\mathrm{V}[w_{p^n}]\cdot\nabla\phi \;
			dxdt \rightarrow 0.
		\end{equation}
		For the second term, since $V[w_{p^n}] \rightharpoonup^* V[w_{p^*}]$ weakly* in $L^\infty(0,T;L^\infty(\mathbb{R}^N))$ and
		$p^* \in L^\infty(0,T;L^\infty(\mathbb{R}^N))$ we have:
		\begin{equation}
			\int_0^T \int_{\mathbb{R}^N}
			p^*(\mathrm{V}[w_{p^n}]-\mathrm{V}[w_{p^*}])\cdot\nabla\phi \; dxdt \rightarrow
			0.
		\end{equation}
			Hence:
		\begin{equation}\label{weak_conv_p2}
			\int_0^T \int_{\mathbb{R}^N} p^n\mathrm{V}[w_{p^n}]\cdot\nabla\phi \; dxdt
			\rightarrow \int_0^T \int_{\mathbb{R}^N} p^*\mathrm{V}[w_{p^*}]\cdot\nabla\phi
			\; dxdt.
		\end{equation}
		For the right hand side, the use of the
		weak* convergence and the continuity of $F$ and $C$ (assumption $\mathbf{(A6)}$)
		allows passage to the limit:
\begin{equation}\label{weak_conv_p3}
			\int_0^T \int_{\mathbb{R}^N} (F(p^n) - C(d^n,p^n))\phi \; dxdt \rightarrow
			\int_0^T \int_{\mathbb{R}^N} (F(p^*) - C(d^*,p^*))\phi \; dxdt.
		\end{equation}
		From \eqref{weak_conv_d}, \eqref{weak_conv_p1}, \eqref{weak_conv_p2}, and \eqref{weak_conv_p3}, we conclude that $p^{*}$ and $d^{*}$ satisfy the system (b)-(c)\eqref{system1}.\\
		Now we will show that the cost functional $J$ is lower semicontinuous. For the space-time integral term, we use the non-negativity of $p$ (from \ref{lemma1}) and monotone convergence theorem to handle the integration over $\mathbb{R}^N$. This, combined with the weak* convergence of $p^n$ on bounded domains, leads to:
		\begin{equation*}
			 \int_0^T \int_{\mathbb{R}^N} \alpha p^*(t,x) dx dt\leq \liminf_{n \to \infty} \int_0^T \int_{\mathbb{R}^N} \alpha p^n(t,x) dx dt. 
		\end{equation*}
		The quadratic control term is convex, hence weakly lower semicontinuous, giving:
		\begin{equation*}
			\int_0^T \beta (I^*(t))^2 dt \leq \liminf_{n \to \infty} \int_0^T \beta (I^n(t))^2 dt.
		\end{equation*}
		The terminal term follows similarly to the first term. Therefore by combining these inequalities we get:
		\begin{equation*}
			J(I^*) \leq \liminf_{n \to \infty} J(I^n) = \inf_{I \in \mathcal{U}_{ad}} J(I),
		\end{equation*}
		which proves the optimality of $I^*$.
	\end{proof}
	\section{Optimality Conditions}\label{sec4}
	In this section, we will derive the necessary optimality conditions for our
	optimal control problem. Our approach follows the classical framework of optimal
	control theory for partial differential equations, as outlined in the seminal
	works of Lions \cite{lions1972some} and Tröltzsch \cite{troltzsch2010optimal}.
	The derivation of these conditions typically involves three main steps, starting
	by proving the differentiability of the control-to-state mapping, introducing an
	adjoint system, and deriving the optimality conditions.
	\subsection{Differentiability of the Control-to-State Mapping}
	Let us define the following control-to-state mapping:
	\begin{equation}
		\label{mappingG}
	\left\{
	\begin{array}{ll}
		G:\quad \mathcal{U}_{ad} &\longrightarrow C(0,T; L^1(\mathbb{R}^N)) \times C([0,T];
		H^1(\mathbb{R}^N)),\\ 
		\qquad \quad I &\longrightarrow G(I) = (p,d).
		\end{array}\right.
	\end{equation}
   where $(p,d)$ is the solution to system \eqref{system1} corresponding to a given $I\in U_{ad}$. Let us consider the notation $\dfrac{\partial G}{\partial h}(I)$ to denote the gâteaux derivative of $G$ at $I\in U_{ad}$ in the direction $h
   \in L^\infty(0,T)$.
	We have the following lemma:
	\begin{lemma}
		\label{lem:linearized_wellposed}
		Let Assumptions \textbf{(A1)-(A10)} hold. Then for any $h \in L^\infty(0,T)$, the linearized system:
		{\footnotesize
			\begin{equation}\label{linearized_system}
				\begin{cases}
					\vspace{0.15cm}
					\dfrac{\partial \xi}{\partial t}(t,x) + \operatorname{div}\Bigl(
					(\mathrm{V}\left[w_p \right]\xi)(t,x)\Bigr)  + \operatorname{div}\Bigl((
					\mathrm{V}^\prime\left[w_p \right]w_\xi p)(t,x)\Bigr)  =g(d,p)(t,x) \xi(t,x) - \partial_{1}C(d,p)(t,x)\eta(t,x), \quad &(a)\\
					\vspace{0.15cm}
					\dfrac{\partial \eta}{\partial t}(t,x) - \operatorname{div}(D(x)\nabla \eta(t,x))=\partial_{1}\Gamma(I,d)(t,x)h(t)+\partial_{2}\Gamma(I,d)(t,x)\eta(t,x), &(b)\\
					\xi(0,x) = 0, \quad \eta(0,x) = 0,
				\end{cases}
		\end{equation}}
		has a unique solution $(\xi, \eta)$ in $C(0,T; L^1(\mathbb{R}^N)) \times C(0,T; H^1(\mathbb{R}^N))$. Where $g(d,p):=F^{\prime}(p) - \partial_{2}C(d,p)$. $\mathrm{V}^{\prime}$, $F^{\prime}$ are the first derivatives of $\mathrm{V}$ and $F$ respectively, while $\partial_1$ and $\partial_2$ denote the first derivative with respect to first and second variable, respectively. 
	\end{lemma}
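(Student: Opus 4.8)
The plan is to exploit two structural features of system \eqref{linearized_system}: it is \emph{linear} in the unknowns $(\xi,\eta)$, and it is \emph{triangular}, since equation \eqref{linearized_system}(b) for $\eta$ does not involve $\xi$. Because the state $(p,d)=G(I)$ is fixed, all the coefficients $g(d,p)$, $\partial_1 C(d,p)$, $\partial_1\Gamma(I,d)$, $\partial_2\Gamma(I,d)$ and the velocity field $\mathrm{V}[w_p]$ are prescribed and, by Assumptions \textbf{(A6)}, \textbf{(A9)} and \textbf{(A10)}, bounded. I would therefore first solve \eqref{linearized_system}(b) for $\eta$, and then substitute the resulting $\eta$ as a known source into the nonlocal transport equation \eqref{linearized_system}(a) for $\xi$, reusing the machinery of Theorem~\ref{thm:wellposed} in both steps.

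For the parabolic equation \eqref{linearized_system}(b), writing $\mathcal{A}\eta=-\operatorname{div}(D(x)\nabla\eta)$, I would use the mild formulation
\[
\eta(t,\cdot)=\int_0^t e^{-(t-s)\mathcal{A}}\bigl[\partial_1\Gamma(I,d)(s,\cdot)\,h(s)+\partial_2\Gamma(I,d)(s,\cdot)\,\eta(s,\cdot)\bigr]\,ds,
\]
which, since $\partial_2\Gamma(I,d)$ is bounded by \textbf{(A10)} and the contraction $\|e^{-t\mathcal{A}}\|_{L^2\to L^2}\leq1$ holds as in Theorem~\ref{thm:wellposed}, defines a contraction on $C(0,T;L^2(\R^N))$ for short time. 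Existence and uniqueness in $C(0,T;H^1(\R^N))$, together with an a priori bound of the form $\|\eta\|_{C(0,T;L^2)}\leq C\,\|h\|_{L^\infty(0,T)}$, then follow from Gronwall's inequality exactly as for $d$; linearity lets the local solution be extended to all of $[0,T]$.

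With $\eta$ fixed, I would solve \eqref{linearized_system}(a) by the method of characteristics. Since $\mathrm{V}[w_p]$ is now a prescribed velocity, the characteristic flow $\mathrm{X}[t,x]$ of \eqref{eq:char} coincides with the one built in Theorem~\ref{thm:wellposed}, and the Jacobian identity \eqref{det_positif} with \textbf{(A9)} still gives $0<\det(D_x\mathrm{X}[t,x](\tau))\leq e^{LT}$. Using the zero initial datum, the Duhamel representation becomes
\[
\xi(t,x)=\int_0^t\Bigl[g(d,p)\,\xi-\partial_1 C(d,p)\,\eta-\operatorname{div}\bigl(\mathrm{V}'[w_p]\,w_\xi\,p\bigr)\Bigr]\bigl(\tau,\mathrm{X}[t,x](\tau)\bigr)\det\bigl(D_x\mathrm{X}[t,x](\tau)\bigr)\,d\tau,
\]
which I would read as a fixed-point equation $\xi=\Psi(\xi)$ on a ball of $C(0,T;L^1(\R^N))$. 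The reaction term $g(d,p)\,\xi$ and the source $\partial_1 C(d,p)\,\eta$ are controlled in $L^1$ by the same change-of-variables and Gronwall estimates used for the map $\Phi$ in \eqref{eq:Phi}, the coefficients being bounded by \textbf{(A6)} and \textbf{(A10)}; here one must also check that the coupling term $\partial_1 C(d,p)\,\eta$ indeed belongs to $C(0,T;L^1(\R^N))$, which is where the integrability built into the growth hypotheses \textbf{(A4)}--\textbf{(A7)} enters.

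The main obstacle is the genuinely nonlocal term $\operatorname{div}\bigl(\mathrm{V}'[w_p]\,w_\xi\,p\bigr)$, with $w_\xi(t,x)=\int_\Omega K(x,y)\xi(t,y)\,dy$ depending linearly and nonlocally on $\xi$. The decisive point is that convolution against $K$ is smoothing: by \textbf{(A8)} one has $\|w_\xi(t,\cdot)\|_{W^{1,\infty}(\R^N)}\leq\|K\|_{C_b^1}\,\|\xi(t,\cdot)\|_{L^1(\R^N)}$, so both $w_\xi$ and $\nabla w_\xi$ are controlled linearly by $\|\xi(t,\cdot)\|_{L^1}$. Expanding
\[
\operatorname{div}\bigl(\mathrm{V}'[w_p]\,w_\xi\,p\bigr)=\mathrm{V}'[w_p]\cdot\nabla(w_\xi\,p)+(w_\xi\,p)\,\operatorname{div}\bigl(\mathrm{V}'[w_p]\bigr),
\]
and using the $L^\infty$ bound \eqref{Linfest} on $p$, the $C_b^2$ regularity of $\mathrm{V}$ and $K$ (\textbf{(A8)}, \textbf{(A9)}), and the $W^{1,1}$-regularity of $p$ propagated from \textbf{(A5)}, each piece is estimated in $L^1$ by $C\,\|\xi(t,\cdot)\|_{L^1}$. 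This yields $\|\Psi(\xi_1)-\Psi(\xi_2)\|_{C(0,T;L^1)}\leq C\,T\,\|\xi_1-\xi_2\|_{C(0,T;L^1)}$, so that for $T^\ast$ small $\Psi$ is a contraction and Banach's theorem gives a unique local solution, identified with the solution of \eqref{linearized_system}(a) through Theorem~3.24 in \cite{keimer2018existence}. Linearity then furnishes the global $L^1$ estimate and the extension to $[0,T]$, while uniqueness of the pair $(\xi,\eta)$ follows by subtracting two solutions and applying the same Gronwall argument to the resulting homogeneous system. The delicate steps are thus the $L^1$-control of the nonlocal divergence term and the matching of the $L^2/H^1$-regularity of $\eta$ with the $L^1$-transport framework for $\xi$.
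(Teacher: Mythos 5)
Your proposal is correct and follows the same overall architecture as the paper: exploit the triangular structure to solve the parabolic equation \eqref{linearized_system}(b) for $\eta$ first, then treat \eqref{linearized_system}(a) as a linear nonlocal balance law along the \emph{fixed} characteristic flow of $\mathrm{V}[w_p]$, close a Banach fixed point for small time, and get uniqueness by a Gronwall argument on the difference of two solutions. The one genuine difference is where the fixed point lives. You iterate directly on $\xi$ in a ball of $C(0,T;L^1(\R^N))$, using the smoothing bound $\|w_\xi(t,\cdot)\|_{W^{1,\infty}}\leq \|K\|_{C^1_b}\|\xi(t,\cdot)\|_{L^1}$ to control $\operatorname{div}(\mathrm{V}'[w_p]\,w_\xi\,p)$ linearly by $\|\xi\|_{L^1}$; the paper instead freezes the nonlocal weight, solves the resulting linear hyperbolic equation \eqref{xi} exactly via \eqref{xi_semi}, and runs the contraction on the weight $\mathrm{w}$ itself in the heavier space $C(0,T;C^1_b(\R^N))$ through the map $\mathrm{L}$ in \eqref{Ftx}. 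Your version is more economical: the key $L^1$ estimate you need for the contraction is precisely the one the paper ends up proving anyway in its uniqueness step, so you avoid the separate $C^1_b$ bookkeeping (the constants $A_1$, $\overline{A_1}$, $A_2$) at no real cost; the paper's route has the advantage of staying literally within the framework of Keimer et al.\ and of producing the explicit representation \eqref{xi_semi} that is reused later. Two points you should make explicit if you write this up: (i) the $W^{1,1}$-regularity of $p$ that you invoke to put $\mathrm{V}'[w_p]\,w_\xi\,\nabla p$ in $L^1$ is not automatic from Theorem~\ref{thm:wellposed} and requires the higher-regularity result (Theorem~4.2 of \cite{keimer2018existence}) under \textbf{(A10)}, exactly as the paper does; and (ii) the membership $\partial_1 C(d,p)\,\eta\in C(0,T;L^1(\R^N))$, which you correctly flag, does need an argument on the unbounded domain since $\eta$ is a priori only in $C(0,T;H^1(\R^N))$ --- boundedness of $\partial_1 C$ alone does not suffice, and one must use the decay built into \textbf{(A4)}--\textbf{(A7)} (or propagate an $L^1$ bound for $\eta$) to close this.
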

	\begin{proof}
		 The second equation \eqref{linearized_system}-(b) is a linear parabolic equation, then it has a unique solution. For the well-posedness of equation \eqref{linearized_system}-(a), the proof is based on the work of Keimer et al. \cite{keimer2018existence}. However, our equation \eqref{linearized_system}-(a) still slightly different from the one discussed in \cite{keimer2018existence}. Let us consider the following equation:
		{\footnotesize
			\begin{equation}\label{xi}
				\begin{cases}
					\dfrac{\partial \xi}{\partial t}(t,x) + \operatorname{div}\Bigl(
					(\mathrm{V}\left[w_p \right]\xi)(t,x)\Bigr)    = - \operatorname{div}\Bigl((
					\mathrm{V}^\prime\left[w_p \right]\mathrm{w} p)(t,x)\Bigr)- \partial_1C(d,p)(t,x)\eta(t,x)-g(d,p)(t,x) \xi(t,x),\\
					\xi(0,x) = 0,
				\end{cases}
		\end{equation}}
		where $\mathrm{w}$ is a fixed function in $C(0,T;C^{1}_{b}(\R^{N}))$ independent from $\xi$. The equation \eqref{xi} is a linear balance hyperbolic law. Hence, by using the Lemma~3.4 in \cite{keimer2018existence} and under the assumptions \textbf{(A1)-(A9)}, we deduce that \eqref{xi} has a unique solution which may be written as follows:
		\begin{equation}\label{xi_semi}
			\resizebox{\textwidth}{!}{$
				\displaystyle \xi(t,x)=-\int_0^t \left[ \partial_1C(d,p)\eta+\operatorname{div}\Bigl(
				\mathrm{V}^\prime\left[w_p \right]\,\mathrm{w}\, p\Bigr)\right] (s,\mathrm{X}[t,x](s))\exp\left(\int_{s}^{t}
				g(d,p)  (\tau,\mathrm{X}[t,x](\tau)) d \tau\right) 
				\det(D_{x} \mathrm{X}[t,x](s))d s,$}
		\end{equation}
		where $\mathrm{X}[t,x]$ is the solution of the characteristic equation \eqref{eq:char}.
		Now let us consider the following closed ball:
		\begin{equation}\label{setB}
			B=\left\lbrace f\in C\left( 0,T;C^{1}_{b}(\R^{N})\right):\; \|f\|_{C\left( 0,T;C^{1}_{b}(\R^{N})\right)}\leq  M \right\rbrace ,
		\end{equation} 
		where $M$ is a positive constant. Let consider the map $\mathrm{L}: B \to B$ defined by:
		\begin{equation}\label{Ftx} \resizebox{\textwidth}{!}{$
				\mathrm{L}(\mathrm{w})(t,x) =\displaystyle -\int_{0}^{t}\int_{\mathrm{X}[t,\Omega](\tau)} K(x,X[\tau,y](t)) \left[ \partial_1C(d,p)\eta+\operatorname{div}\Bigl(
				\mathrm{V}^\prime\left[w_p \right]\,\mathrm{w}\, p\Bigr)\right] (\tau,y)\exp\left(\int_{s}^{t}
				g(d,p)  (\tau,\mathrm{X}[\tau,y](s)) d s\right)d y d \tau,$}
		\end{equation}
		where $\Omega \subseteq \R^{N}$ is the domain appearing in \eqref{nonlocalterm}, and $\mathrm{X}[t,\Omega](\tau)$ is the image by $\mathrm{X}[t,\cdot](\tau)$ of $\Omega$.
		 We first show that the mapping $\mathrm{L}$ is well-defined, for $\mathrm{w} \in B$ we have:
		{\footnotesize\begin{align*}
				\left| \mathrm{L}(\mathrm{w})(t,x)\right|  &\leq \int_{0}^{t}\int_{\R^{N}}\left|  K(x,X[\tau,y](t))\left( \partial_1C(d,p)\eta\right)  (\tau,y)\exp\left(\int_{s}^{t}
				g(d,p)  (\tau,\mathrm{X}[\tau,y](s)) d s\right)\right| d y d \tau\\
				& + \int_{0}^{t}\int_{\R^{N}}\left|  K(x,X[\tau,y](t)) \operatorname{div}\Bigl(
				\mathrm{V}^\prime\left[w_p \right]\,\mathrm{w}\, p\Bigr) (\tau,y)\exp\left(\int_{s}^{t}
				g(d,p)   (\tau,\mathrm{X}[\tau,y](s)) d s\right)\right| d y d \tau.
		\end{align*}} 
		According to the higher regularity Theorem~4.2 in \cite{keimer2018existence} and the assumptions \textbf{(A5)}, \textbf{(A8)}, \textbf{(A9)} and \textbf{(A10)}, we obtain that $p \in C(0,T; W^{1,1}(\mathbb{R}^N))$. Then 
		{\footnotesize
		\begin{align*}
			 A_1:&=\left\| K\right\|_{L^{\infty}(\R^{N}\times \R^{N})} \left\|  \partial_1C(d,p)\right\|_{L^{\infty}(0,T,L^{\infty}(\R^{N}))} \left\| \eta\right\|_{L^{\infty}(0,T,L^{\infty}(\R^{N}))}   \exp\left( \left\| g(d,p) \right\|_{L^{1}(0,T,L^{\infty}(\R^{N}))}  \right)< \infty,\\
			 \overline{A_1}:&=\left\| \nabla_{1}K\right\|_{L^{\infty}(\R^{N}\times \R^{N})} \left\|  \partial_1C(d,p)\right\|_{L^{\infty}(0,T,L^{\infty}(\R^{N}))} \left\| \eta\right\|_{L^{\infty}(0,T,L^{\infty}(\R^{N}))}   \exp\left( \left\| g(d,p) \right\|_{L^{1}(0,T,L^{\infty}(\R^{N}))}  \right)< \infty,\\
			A_{2}:&= \left\| p \right\|_{C(0,T;L^{1}(\R^{N}))} \left\| \operatorname{div}(\mathrm{V}^{\prime}) \right\|_{C(0,T;L^{\infty}(\R^{N}))}+2\left\| \mathrm{V}^\prime\right\|_{C(0,T;L^{\infty}(\R^{N}))}\left\|\nabla p \right\|_{C(0,T;L^{1}(\R^{N}))}< \infty,
				\end{align*}}
are positive constants.
	We get after some usual estimates:
		{\footnotesize\begin{equation*}
				\Bigl|\mathrm{L}(\mathrm{w})(t,x)\Bigr|  \leq T\left( A_{1}+A_{2}\left\| K\right\|_{L^{\infty}(\R^{N}\times\R^{N})}  \exp\left( \left\| g(d,p) \right\|_{L^{1}(0,T,L^{\infty}(\R^{N}))}  \right)\left\| \mathrm{w}\right\|_{C\left( 0,T;C^{1}_{b}(\R^{N})\right)}\right),
		\end{equation*}} 
	and
		{\footnotesize\begin{equation*}
			\Bigl| \nabla\mathrm{L}(\mathrm{w})(t,x)\Bigr|  \leq T\left( \overline{A}_{1}+A_{2}\left\| \nabla K\right\|_{L^{\infty}(\R^N\times\R^{N}))}  \exp\left( \left\| g(d,p) \right\|_{L^{1}(0,T,L^{\infty}(\R^{N}))}  \right)\left\| \mathrm{w}\right\|_{C\left( 0,T;C^{1}_{b}(\R^{N})\right)}\right).
	\end{equation*}}
	Therefore, by taking the supremum over time and space, and choosing $T$ small enough we get:
		\begin{equation}
		\Bigl\|  \mathrm{L}(\mathrm{w})\Bigr\|_{C\left( 0,T;C^{1}_{b}(\R^{N})\right)} \leq M.
	\end{equation}
		
		Hence, we deduce that the mapping $\mathrm{L}$ is well-defined.
		Now let $\mathrm{w}_{1},\mathrm{w}_{2}\in B$, we get using similar estimations:
		\begin{equation}
			\Bigl\|  \mathrm{L}(\mathrm{w}_{1})- \mathrm{L}(\mathrm{w}_{2})\Bigr\| _{C\left( 0,T;C^{1}_{b}(\R^{N})\right)} \leq T R\left\| \mathrm{w}_{1}-\mathrm{w}_{2}\right\|_{C\left( 0,T;C^{1}_{b}(\R^{N})\right)}.
		\end{equation}
		where $R$ is a constant depending on $K,g,\mathrm{V}$ and $\eta$. Hence, for $T$ small enough, we deduce that $L$ is a contaction on $B$, which is a closed subset of the Banach space $C\left( 0,T;C^{1}_{b}(\R^{N})\right)$, then by Banach's fixed point theorem, there exists a unique function $\mathrm{w}^{*}\in B$ such that $\mathrm{w}^{*}=\mathrm{L}[\mathrm{w}^{*}]$.
		Substituting back into the equation \eqref{xi}, and by following the approach of \cite{keimer2018existence}, we get the existence of a solution for every $T>0$.\\
		For the uniqueness, let $(\xi_1,\eta_1)$ and $(\xi_2,\eta_2)$ be two solutions of the linearized system. Let define $\tilde{\xi} = \xi_1 - \xi_2$, and $\tilde{\eta} = \eta_1 - \eta_2$. Then for the parabolic equation $\tilde{\eta}$ we have:
		\begin{equation*}
			\begin{cases}
				\dfrac{\partial \tilde{\eta}}{\partial t} - \operatorname{div}(D(x)\nabla \tilde{\eta}) + \partial_{2}\Gamma(I,d)\tilde{\eta} = 0,\\
				\tilde{\eta}(0,x) = 0.
			\end{cases}
		\end{equation*}
By using estimates from \eqref{thm:wellposed}, we deduce that $\tilde{\eta}=0$ in $C(0,T,H^1(\R^N))$. In the other hand, for $\tilde{\xi}$ we have:
\begin{equation*}
	\begin{cases}
		\dfrac{\partial \tilde{\xi}}{\partial t} + \operatorname{div}\left(\mathrm{V}[w_p]\tilde{\xi}\right) + \operatorname{div}\left(\mathrm{V}'[w_p]w_{\tilde{\xi}} p\right) = F'(p)\tilde{\xi} - \partial_2C(d,p)\tilde{\xi},\\
		\tilde{\xi}(0,x) = 0.
	\end{cases}
\end{equation*}
By using the characteristics as in \eqref{xi_semi}, we obtain:
\begin{equation*}
		\tilde{\xi}(t,x) = -\int_0^t \operatorname{div}\left(\mathrm{V}'[w_p]w_{\tilde{\xi}} p\right)(s,X[t,x](s)) \exp\left(\int_{s}^{t} g(d,p)(\tau,X[t,x](\tau))d\tau\right)\det(D_x X[t,x](s))ds,
\end{equation*}
By taking the $L^1$ estimate as before we get:
\begin{equation*}
		\left\| \tilde{\xi}(t,\cdot)\right\| _{L^1(\R^N)}\leq \int_0^t \left\| \operatorname{div}\left(\mathrm{V}'[w_p]w_{\tilde{\xi}} p\right)\right\| _{L^1(\R^N)} \exp\left(\|g(d,p)\|_{L^\infty(\R^N)}(t-s)\right)ds.
\end{equation*}
Using assumptions \textbf{(A8)} and \textbf{(A9)}:
\begin{equation}
	 \left\| \operatorname{div}\left(\mathrm{V}'[w_p]w_{\tilde{\xi}} p\right)\right\| _{L^1(\R^N)} \leq C_1\|\tilde{\xi}(t,\cdot)\|_{L^1(\R^N)},
\end{equation}
where $C_1$ depends on $\|K\|_{C^1_b(\R^N)}$, $\|\mathrm{V}'\|_{C^1_b(\R^N)}$, and $\|p\|_{L^\infty(0,T,L^{\infty}(\R^N))}$.
Therefore:
\begin{equation*}
	\|\tilde{\xi}(t,\cdot)\|_{L^1(\R^N)} \leq C_1\int_0^t \|\tilde{\xi}(s,\cdot)\|_{L^1(\R^N)}\exp(C_2(t-s))ds.
\end{equation*}
By Gronwall's inequality we have 
	$\|\tilde{\xi}(t,\cdot)\|_{L^1(\R^N)} \leq 0,$
which leads to the desired result.
	\end{proof}
		\begin{lemma}[Gâteaux Differentiability of the Control-to-State Mapping]
			\label{lem:gateau_diff}
			Let Assumptions \textbf{(A1)-(A10)} hold. Then $G$ given by \eqref{mappingG} is Gâteaux differentiable at any $I \in \mathcal{U}_{ad}$. Furthermore, for any direction $h \in L^\infty(0,T)$, the Gâteaux derivative of $G$ at $I$ in the direction $h$ is $\dfrac{\partial G}{\partial h}(I) =(\xi, \eta)$, where $(\xi, \eta)$ is the solution to the linearized system \eqref{linearized_system}.
		\end{lemma}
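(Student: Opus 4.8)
The plan is to differentiate directly from the definition. Fix $I\in\mathcal{U}_{ad}$ and a direction $h\in L^\infty(0,T)$, and for small real $\lambda\neq 0$ set $(p^\lambda,d^\lambda)=G(I+\lambda h)$ and $(p,d)=G(I)$; introduce the difference quotients $\xi^\lambda=(p^\lambda-p)/\lambda$ and $\eta^\lambda=(d^\lambda-d)/\lambda$. The goal is to prove that $(\xi^\lambda,\eta^\lambda)\to(\xi,\eta)$ in $C(0,T;L^1(\R^N))\times C(0,T;H^1(\R^N))$ as $\lambda\to 0$, where $(\xi,\eta)$ is the unique solution of \eqref{linearized_system} provided by Lemma~\ref{lem:linearized_wellposed}. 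As a preliminary I would record that, since $\|I+\lambda h\|_{L^\infty(0,T)}$ stays bounded for $\lambda$ small, the estimates of Theorem~\ref{thm:wellposed} are uniform in $\lambda$; note also that $I$ enters only the $d$-equation, so $p^\lambda$ differs from $p$ solely through $d^\lambda$ in the term $C(d,p)$. Subtracting the two copies of \eqref{system1} and running the same Gronwall arguments as in the uniqueness proofs then yields the Lipschitz bounds $\|d^\lambda-d\|_{C(0,T;H^1(\R^N))}\le C\lambda$ and $\|p^\lambda-p\|_{C(0,T;L^1(\R^N))}\le C\lambda$. In particular $d^\lambda\to d$, $p^\lambda\to p$, and the quotients $\xi^\lambda,\eta^\lambda$ are bounded.

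Next I treat the parabolic component, which is routine. Subtracting the Duhamel formula \eqref{duhamel} for $d^\lambda$ from that for $d$, dividing by $\lambda$, and expanding $\Gamma$ to first order (legitimate since $\Gamma\in C^1_b$ by \textbf{(A10)}) gives
\begin{equation*}
\partial_t\eta^\lambda-\operatorname{div}(D\nabla\eta^\lambda)=a_1^\lambda\,h+a_2^\lambda\,\eta^\lambda,
\end{equation*}
with mean-value coefficients $a_1^\lambda,a_2^\lambda$ that converge uniformly to $\partial_1\Gamma(I,d)$ and $\partial_2\Gamma(I,d)$ as $\lambda\to 0$ (using $d^\lambda\to d$ and the continuity of the partials of $\Gamma$). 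Setting $v^\lambda=\eta^\lambda-\eta$ and subtracting \eqref{linearized_system}-(b), the equation for $v^\lambda$ has source $(a_1^\lambda-\partial_1\Gamma(I,d))h+(a_2^\lambda-\partial_2\Gamma(I,d))\eta+a_2^\lambda v^\lambda$; the first two terms tend to $0$ in $L^2(0,T;L^2(\R^N))$ while the last is absorbed by Gronwall, so $v^\lambda\to 0$ in $C(0,T;H^1(\R^N))$, i.e. $\eta^\lambda\to\eta$.

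The nonlocal component is the main obstacle. I would start from the characteristic representation \eqref{characterstics_rep} written for both $p^\lambda$ and $p$ and form the difference quotient of the entire identity. This forces me to linearize three $\lambda$-dependent objects: the flow $\mathrm{X}^\lambda[t,x]$ solving \eqref{eq:char} with velocity $\mathrm{V}[w_{p^\lambda}]$, its Jacobian $\det(D_x\mathrm{X}^\lambda)$ given by \eqref{det_positif}, and the reaction term $F(p^\lambda)-C(d^\lambda,p^\lambda)$. Using $\mathrm{V}\in C^2_b$ (\textbf{(A9)}), $F,C\in C^1_b$ (\textbf{(A10)}) and the higher regularity $p\in C(0,T;W^{1,1}(\R^N))$ from Theorem~4.2 of Keimer et al.~\cite{keimer2018existence}, the difference quotient of the flow converges to the solution of the variational equation for the characteristics, whose forcing involves $\mathrm{V}'[w_p]\,w_\xi$; this is precisely what generates the nonlocal transport term $\operatorname{div}(\mathrm{V}'[w_p]w_\xi p)$ in \eqref{linearized_system}-(a), while the reaction linearization produces $g(d,p)\xi-\partial_1C(d,p)\eta$.

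Rather than passing to the limit termwise in the representation, the cleanest route is to show that the remainder $z^\lambda:=\xi^\lambda-\xi$ satisfies the same nonlocal balance law as in the uniqueness part of Lemma~\ref{lem:linearized_wellposed}, driven by an extra source $R^\lambda$ that collects all Taylor remainders together with the differences of coefficients, flows and Jacobians. Each piece of $R^\lambda$ tends to $0$ in $C(0,T;L^1(\R^N))$ thanks to the convergences above and to $v^\lambda\to 0$, which enters through the $\partial_1C(d,p)\eta$ coupling. Applying the Gronwall-type $L^1$ stability estimate for that balance law (the one already used to prove uniqueness in Lemma~\ref{lem:linearized_wellposed}) then yields $\|z^\lambda\|_{C(0,T;L^1(\R^N))}\to 0$, hence $\xi^\lambda\to\xi$. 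Combined with the parabolic step, this identifies $\frac{\partial G}{\partial h}(I)=(\xi,\eta)$ and proves Gâteaux differentiability. I expect the delicate point throughout to be the uniform control of $\mathrm{X}^\lambda-\mathrm{X}$ and of $\det(D_x\mathrm{X}^\lambda)-\det(D_x\mathrm{X})$ in terms of $w_{p^\lambda}-w_p=w_{p^\lambda-p}$, which is where the $C^2$ regularity of $\mathrm{V}$ and $K$ and the $W^{1,1}$ regularity of $p$ are essential.
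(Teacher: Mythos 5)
Your proposal follows essentially the same route as the paper: form the difference quotients $(\xi^\varepsilon,\eta^\varepsilon)$, Taylor-expand the nonlinearities $\mathrm{V}[w_{p^\varepsilon}]p^\varepsilon$, $F$, $C$ and $\Gamma$ using the $C^1_b$/$C^2_b$ assumptions so that $(\xi^\varepsilon,\eta^\varepsilon)$ solves the linearized system up to remainders that vanish as $\varepsilon\to 0$, and then apply the Gronwall-type stability estimate from the uniqueness part of Lemma~\ref{lem:linearized_wellposed} to the difference $(\xi^\varepsilon-\xi,\eta^\varepsilon-\eta)$. The only cosmetic difference is that you carry out the hyperbolic linearization at the level of the characteristic flow and its Jacobian, whereas the paper expands the flux and reaction terms directly in the PDE before invoking the same stability argument; the substance is identical.
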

			\begin{proof}
		Let $I \in \mathcal{U}_{ad}$, $h \in L^\infty(0,T)$, and $\varepsilon > 0$ such that $I^\varepsilon := I + \varepsilon h \in U_{ad}$.
		Let $(p^\varepsilon, d^\varepsilon)
		= G(I^\varepsilon)$ and $(p,d) = G(I)$, and let define $\xi^\varepsilon = (p^\varepsilon - p)/\varepsilon$ and
		$\eta^\varepsilon = (d^\varepsilon - d)/\varepsilon$.\\
		We aim to show that $(\xi^\varepsilon, \eta^\varepsilon) \to (\xi, \eta)$ in
		$X$ as $\varepsilon \to 0$. We have for difference quotients
		$(\xi^\varepsilon, \eta^\varepsilon)$:
		\begin{align*}
			\frac{\partial \xi^\varepsilon}{\partial t} +
			\operatorname{div}\left(\frac{\mathrm{V}\left[w_{p^\varepsilon}
				\right]p^\varepsilon - \mathrm{V}\left[w_p\right]p}{\varepsilon}\right) &=
			\frac{F(p^\varepsilon) - F(p)}{\varepsilon} -
			\frac{C(d^\varepsilon,p^\varepsilon) - C(d,p)}{\varepsilon}, \\
			\frac{\partial \eta^\varepsilon}{\partial t} - \operatorname{div}(D(x)\nabla
			\eta^\varepsilon)  &= \dfrac{\Gamma(I^{\varepsilon},d^{\varepsilon})-\Gamma(I,d)}{\varepsilon}, \\
			\xi^\varepsilon(x,0) = 0, \quad \eta^\varepsilon(x,0) &= 0.
		\end{align*}
		We have $w_{p^{\varepsilon}}=w_p+\varepsilon w_{\xi^{\varepsilon}}$. By assumptions \textbf{(A9)} and \textbf{(A10)}, Taylor's theorem with integral remainder for the nonlinear terms gives:
		\begin{align*}
			\frac{\mathrm{V}\left[w_{p^\varepsilon} \right]p^\varepsilon -
				\mathrm{V}\left[w_p\right]p}{\varepsilon} &=
			\mathrm{V}\left[w_p\right]\xi^\varepsilon +
			\mathrm{V}'\left[w_p\right]w_{\xi^\varepsilon}p + R_1^\varepsilon ,\\
			\frac{F(p^\varepsilon) - F(p)}{\varepsilon} &= F'(p)\xi^\varepsilon +
			R_2^\varepsilon, \\
			\frac{C(d^\varepsilon,p^\varepsilon) - C(d,p)}{\varepsilon} &=\partial_1 C(d,p)\eta^\varepsilon+ \partial_2C(d,p)\xi^\varepsilon+			R_3^\varepsilon,\\
			\dfrac{\Gamma(I^{\varepsilon},d^{\varepsilon})-\Gamma(I,d)}{\varepsilon} &=\partial_{1}\Gamma(I,d)h+\partial_{2}\Gamma(I,d)\eta^{\varepsilon}+R_4^\varepsilon,
		\end{align*}
		where $\left\|R_i^\varepsilon\right\|_{L^\infty(0,T;L^\infty(\mathbb{R}^N))}= o(\varepsilon)$, as $\varepsilon \rightarrow 0$, for $i=1,\dots,4$. Therefore we have:
			{\footnotesize
			\begin{equation}\label{linearized_system_epsilon}
				\begin{cases}
					\vspace{0.15cm}
						\dfrac{\partial \xi^\varepsilon}{\partial t} +
					\operatorname{div}\left(	\mathrm{V}\left[w_p\right]\xi^\varepsilon\right) +\operatorname{div}\left(
					\mathrm{V}'\left[w_p\right]w_{\xi^\varepsilon}p\right)  =F'(p)\xi^\varepsilon 
					 -\partial_1 C(d,p)\eta^\varepsilon -\partial_2C(d,p)\xi^\varepsilon		-\operatorname{div}\left(R_1^\varepsilon\right)+R_2^\varepsilon-R_3^\varepsilon, \quad &(a)\\
					\vspace{0.15cm}
						\dfrac{\partial \eta^\varepsilon}{\partial t} - \operatorname{div}(D(x)\nabla
					\eta^\varepsilon)=\partial_{1}\Gamma(I,d)h+\partial_{2}\Gamma(I,d)\eta^{\varepsilon}+R_4^\varepsilon, &(b)\\
					\xi^\varepsilon(0,x) = 0, \quad \eta^\varepsilon(0,x) = 0.
				\end{cases}
		\end{equation}}
			Now let $\tilde{\xi} = \xi^\varepsilon - \xi$ and $\tilde{\eta} =
		\eta^\varepsilon - \eta$. By subtracting \eqref{linearized_system} from \eqref{linearized_system_epsilon} we get the following perturbed system:
			\begin{equation}\label{linearized_system_tilde}
				\begin{cases}
					\vspace{0.15cm}
						\dfrac{\partial \tilde{\xi}}{\partial t} +
					\operatorname{div}(\mathrm{V}\left[w_p\right]\tilde{\xi}) +
					\operatorname{div}(\mathrm{V}^\prime\left[w_p \right]p w_{\tilde{\xi}})=
				g(d,p)\tilde{\xi} - \partial_1C(d,p)\tilde{\eta}  -\operatorname{div}\left(R_1^\varepsilon\right)+R_2^\varepsilon-R_3^\varepsilon, \quad &(a)\\
					\vspace{0.15cm}
						\dfrac{\partial \tilde{\eta}}{\partial t} - \operatorname{div}(D(x)\nabla
					\tilde{\eta})= \partial_{2}\Gamma(I,d)\tilde{\eta}+R_{4}^{\varepsilon}, &(b)\\
				\tilde{\xi}(x,0) = 0, \quad \tilde{\eta}(x,0) = 0.
				\end{cases}
		\end{equation}
			By using a similar technique as in the uniqueness proof of linearized system, we obtain:
		\[
		\left\|(\tilde{\xi}, \tilde{\eta})\right\|_X \leq
		C_1\left\|R^\varepsilon\right\|_{L^\infty(0,T;L^\infty(\mathbb{R}^N))},
		\]
		where $R^\varepsilon = R_2^\varepsilon+ \operatorname{div}(R_1^\varepsilon) + R_3^\varepsilon+R_4^\varepsilon$ and $C_1$ is some positive constant independent of $\varepsilon$. Therefore,
		\[
		\lim_{\varepsilon \to 0} \left\|\frac{G(I + \varepsilon h) - G(I)}{\varepsilon}
		- (\xi, \eta)\right\|_X = \lim_{\varepsilon \to 0} \left\|(\tilde{\xi},
		\tilde{\eta})\right\|_X = 0.
		\]
		This establishes the Gâteaux differentiability of $G$ at $I$ in the direction
		$h$, with $G'(I)h = (\xi, \eta)$.
			\end{proof}
	Now, we establish a result on the differentiability of the cost functional:
	\begin{theorem}[On the Gâteau-differentiability of the Cost Functional]
		\label{thm:cost_diff}
		Let Assumptions \textbf{(A1)-(A10)} hold. Then the cost functional $J$ is
		Gâteaux differentiable at any $I \in \mathcal{U}_{ad}$. Furthermore, for any
		direction $h \in L^\infty(0,T)$, the Gâteaux derivative of $J$ at $I$ in the
		direction $h$ is given by:
		\begin{equation}
			\label{dJ}
			\langle J'(I), h \rangle = \int_0^T \left[ \alpha \int_{\mathbb{R}^N}
			\xi(t,x)dx + 2 \beta I(t)h(t)\right]dt + \gamma \int_{\mathbb{R}^N}
			\xi(T,x)dx
		\end{equation}
		where $(\xi, \eta)$ is the solution to the linearized system
		\eqref{linearized_system}.
	\end{theorem}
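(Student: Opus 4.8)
The plan is to compute the directional limit directly from the definition of the Gâteaux derivative, that is, to evaluate $\lim_{\varepsilon\to 0}\varepsilon^{-1}\bigl(J(I+\varepsilon h)-J(I)\bigr)$ and to identify it with the right-hand side of \eqref{dJ}. First I would fix $I \in \mathcal{U}_{ad}$ and $h \in L^\infty(0,T)$, take $\varepsilon>0$ small enough that $I^\varepsilon := I+\varepsilon h \in \mathcal{U}_{ad}$, and set $(p^\varepsilon,d^\varepsilon)=G(I^\varepsilon)$, $(p,d)=G(I)$, together with $\xi^\varepsilon := (p^\varepsilon-p)/\varepsilon$ — precisely the difference quotient already controlled in Lemma~\ref{lem:gateau_diff}. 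Decomposing $J$ as in \eqref{Jcost} into its cumulative, control-penalty and terminal parts, the difference quotient splits as
\begin{equation*}
\frac{J(I^\varepsilon)-J(I)}{\varepsilon} = \alpha\int_0^T\!\!\int_{\R^N}\xi^\varepsilon(t,x)\,dx\,dt + \beta\int_0^T\frac{(I+\varepsilon h)^2-I^2}{\varepsilon}\,dt + \gamma\int_{\R^N}\xi^\varepsilon(T,x)\,dx,
\end{equation*}
so that the task reduces to passing to the limit in each of the three terms separately.

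The control-penalty term is elementary: $\varepsilon^{-1}\bigl((I+\varepsilon h)^2-I^2\bigr)=2Ih+\varepsilon h^2$ converges to $2Ih$ in $L^1(0,T)$ as $\varepsilon\to 0$, since $I,h\in L^\infty(0,T)$; hence this term tends to $2\beta\int_0^T I(t)h(t)\,dt$. For the two state-dependent terms I would invoke Lemma~\ref{lem:gateau_diff}, which furnishes the strong convergence $\xi^\varepsilon\to\xi$ in $C(0,T;L^1(\R^N))$, where $\xi$ is the first component of the solution to the linearized system \eqref{linearized_system}. Because the constant function $1$ lies in $L^\infty(\R^N)$, for each $t$ one has
\begin{equation*}
\left|\int_{\R^N}\bigl(\xi^\varepsilon(t,\cdot)-\xi(t,\cdot)\bigr)\,dx\right|\le \|\xi^\varepsilon(t,\cdot)-\xi(t,\cdot)\|_{L^1(\R^N)}\le \|\xi^\varepsilon-\xi\|_{C(0,T;L^1(\R^N))},
\end{equation*}
so that $\int_{\R^N}\xi^\varepsilon(t,\cdot)\,dx\to\int_{\R^N}\xi(t,\cdot)\,dx$ \emph{uniformly} in $t\in[0,T]$. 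This uniform convergence justifies passing to the limit under the time integral in the cumulative term and directly in the terminal term at $t=T$, producing $\alpha\int_0^T\int_{\R^N}\xi\,dx\,dt$ and $\gamma\int_{\R^N}\xi(T,x)\,dx$ respectively.

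Combining the three limits reproduces exactly \eqref{dJ}. To confirm that this limit is genuinely the Gâteaux derivative, I would observe that $(\xi,\eta)$ depends linearly on $h$, since the linearized system \eqref{linearized_system} is linear with the forcing term $\partial_1\Gamma(I,d)\,h$ entering linearly, and that $2\beta I h$ is manifestly linear in $h$; the resulting functional $h\mapsto\langle J'(I),h\rangle$ is therefore linear and bounded on $L^\infty(0,T)$. The only genuinely delicate point — and the step I expect to require the most care — is the passage to the limit in the spatial integrals over the unbounded domain $\R^N$: pairing $\xi^\varepsilon$ against the non-decaying constant weight $1$ is continuous only with respect to the $L^1$ norm, which is precisely why the argument relies on the strong $C(0,T;L^1(\R^N))$ convergence supplied by Lemma~\ref{lem:gateau_diff}, rather than on the weaker weak-$*$ convergences used earlier in the existence proof.
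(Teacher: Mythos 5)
Your proposal is correct and follows essentially the same route as the paper's proof: decompose the difference quotient of $J$ into the cumulative, control-penalty and terminal parts, handle the quadratic term elementarily, pass to the limit in the state-dependent terms via the strong convergence $\xi^\varepsilon\to\xi$ in $C(0,T;L^1(\R^N))$ from the Gâteaux differentiability of the control-to-state map, and verify linearity and boundedness of the resulting functional (the paper does this verification first and the limit computation second, but the content is identical). Your explicit justification of the pairing against the constant weight $1$ over $\R^N$ via the $L^1$ norm is a welcome point of care that the paper leaves implicit.
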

{\begin{proof}
		Let $I \in \mathcal{U}_{ad}$ and define the functional $L_I: L^{\infty}(0,T) \to \mathbb{R}$ by:
		\begin{equation}
			\langle L_I,h\rangle = \int_0^T \left[ \alpha \int_{\mathbb{R}^N} \xi(t,x)dx + 2\beta I(t)h(t)\right]dt + \gamma \int_{\mathbb{R}^N} \xi(T,x)dx
		\end{equation}
		where $(\xi,\eta)$ solves the linearized system \eqref{linearized_system} with the direction $h$. Now let $h_1, h_2 \in L^{\infty}(0,T)$ and $\lambda \in \mathbb{R}$, and let us consider $h = h_1 + \lambda h_2$, with $(\xi_1,\eta_1)$ and $(\xi_2,\eta_2)$ are the solutions to the linearized system corresponding to directions $h_1$ and $h_2$ respectively. By the linearity of system \eqref{linearized_system}, the solution $(\xi,\eta)$ corresponding to $h$ satisfies $\xi = \xi_1 + \lambda \xi_2$ and $\eta = \eta_1 + \lambda \eta_2$. Consequently:
		\begin{align*}
			\langle L_I,h_1 + \lambda h_2\rangle &= \int_0^T \left[ \alpha \int_{\mathbb{R}^N} (\xi_1 + \lambda \xi_2)dx + 2\beta I(t)(h_1(t) + \lambda h_2(t))\right]dt  + \gamma \int_{\mathbb{R}^N} (\xi_1 + \lambda \xi_2)(T,x)dx \\
			&= 	\langle L_I,h_1\rangle + \lambda	\langle L_I,  h_2\rangle.
		\end{align*}
		Therefore $L_I$ is linear. Furthermore, for any $h \in L^\infty(0,T)$, by using the estimates from Lemma \ref{lem:linearized_wellposed} we obtain:
		\begin{equation*}
			|\langle L_I,h\rangle| \leq \alpha \int_0^T \int_{\mathbb{R}^N} |\xi(t,x)|dx dt + 2\beta \|I\|_{L^\infty}\|h\|_{L^\infty}T + \gamma \int_{\mathbb{R}^N} |\xi(T,x)|dx \leq C_1 \|h\|_{L^\infty(0,T)},
		\end{equation*}
		where $C_1$ is a positive constant depending on $\alpha$, $\beta$, $\gamma$, $T$, and the bounds from Lemma \ref{lem:linearized_wellposed}. Therefore $L_I$ is continuous.
		 
		To show that $L_I$ represents the Gâteaux derivative of $J$, let $h \in L^{\infty}(0,T)$, $\varepsilon > 0$, and let define $I^\varepsilon = I + \varepsilon h$ and let $(p^\varepsilon, d^\varepsilon) = G(I^\varepsilon)$ and $(p,d) = G(I)$. Let define $\Phi_I(\varepsilon)=	\dfrac{J(I^\varepsilon) - J(I)}{\varepsilon}$, we have:
		\begin{align*}
			\Phi_I(\varepsilon) &= \int_0^T \left[\alpha \int_{\mathbb{R}^N} \frac{p^\varepsilon - p}{\varepsilon}dx + \beta\frac{(I^\varepsilon)^2 - I^2}{\varepsilon}\right]dt  + \gamma \int_{\mathbb{R}^N} \frac{p^\varepsilon(T,x) - p(T,x)}{\varepsilon}dx \\
			&= \int_0^T \left[\alpha \int_{\mathbb{R}^N} \xi^\varepsilon dx + \beta(2Ih + \varepsilon h^2)\right]dt + \gamma \int_{\mathbb{R}^N} \xi^\varepsilon(T,x)dx
		\end{align*}
		where $\xi^\varepsilon = (p^\varepsilon - p)/\varepsilon$. By Lemma \ref{lem:linearized_wellposed}, we have $\xi^\varepsilon \to \xi$ in $X$ as $\varepsilon \to 0$, where $\xi$ is the solution to the linearized system \eqref{linearized_system}. Therefore:
		\begin{equation}
			\lim_{\varepsilon \to 0} \Phi_I(\varepsilon)= \int_0^T \left[\alpha \int_{\mathbb{R}^N} \xi(t,x)dx + 2\beta I(t)h(t)\right]dt + \gamma \int_{\mathbb{R}^N} \xi(T,x)dx = \langle L_I,h\rangle.
		\end{equation}
		This establishes that $J$ is Gâteaux differentiable at $I$ with derivative $J'(I) = L_I$, which complete the proof.
	\end{proof}
\subsection{Necessary conditions}
	To simplify the expression of the gradient of $J$, we introduce an adjoint
	system. This system is designed to eliminate the dependence on $\xi$ in the
	expression of $J^\prime(I)$. We introduce the following system:
	\begin{equation}\label{adjoint_system}
		\begin{cases}
			\vspace{0.2cm}
			\displaystyle	-\dfrac{\partial q}{\partial t}(t,x) -
			\mathrm{V}\left[w_p(t,x)\right] \cdot \nabla q(t,x) -
			\int_{\Omega}K(y,x)\left( p\mathrm{V}^{\prime}\left[w_p\right]\cdot\nabla
			q\right) (t,y)  d y  +  g(d,p) (t,x) q(t,x) = \alpha, \quad&(a)\\
			\vspace{0.2cm}
			-\dfrac{\partial r}{\partial t}(t,x) - \operatorname{div}(D(x)\nabla r(t,x)) +
		\partial_2\Gamma(I, d)(t,x)r(t,x) = -\partial_2 C(d,p)q(t,x) , &(b)\\
			q(T,x) = \gamma, \quad r(T,x) = 0.
		\end{cases}
	\end{equation}
	We first establish the following lemma: 
	\begin{lemma}[Well-posedness of the Adjoint System]
		Let Assumptions \textbf{(A1)-(A12)} hold. Then the adjoint system \eqref{adjoint_system}
		has a unique solution $(q,r)$ in $C(0,T; L^{\infty}(\mathbb{R}^N)) \times C(0,T; H^1(\mathbb{R}^N))$.
	\end{lemma}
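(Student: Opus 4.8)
The plan is to exploit two structural features of the adjoint system \eqref{adjoint_system}: its \emph{backward-in-time} character and its \emph{triangular coupling}. First I would reverse time by setting $\tau = T-t$ and $\hat q(\tau,\cdot)=q(T-\tau,\cdot)$, $\hat r(\tau,\cdot)=r(T-\tau,\cdot)$; this turns the terminal data $q(T,\cdot)=\gamma$, $r(T,\cdot)=0$ into initial data at $\tau=0$ and converts \eqref{adjoint_system} into a forward evolution system in which the coefficients $w_p$, $g(d,p)$, $\partial_2\Gamma(I,d)$ and $\partial_2 C(d,p)$ are merely evaluated at $T-\tau$ and retain the regularity established in Theorem~\ref{thm:wellposed}. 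Since equation \eqref{adjoint_system}-(a) does not involve $r$, the system is triangular: I would solve (a) first for $q$, then insert the resulting $q$ as a known source into the linear parabolic equation (b) and solve for $r$.

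For the nonlocal transport equation (a), the obstacle is the term $\int_\Omega K(y,x)\,(p\,\mathrm{V}'[w_p]\cdot\nabla q)(\cdot,y)\,dy$, which couples $q$ at $x$ to $\nabla q$ elsewhere. Here I would use Assumption \textbf{(A11)}: integrating by parts in $y$ and using $K(y,x)=0$ for $y\in\partial\Omega$ removes the boundary term and transfers the derivative off $q$, rewriting this term as $-\int_\Omega \nabla_y\!\cdot\!\bigl(K(y,x)\,p\,\mathrm{V}'[w_p]\bigr)\,q(\cdot,y)\,dy$, a \emph{bounded} nonlocal operator acting on $q$ rather than $\nabla q$. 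Its coefficients are controlled by $K\in C^2_b$ \textbf{(A8)}, $\mathrm{V}\in C^2_b$ \textbf{(A9)} and the higher regularity $p\in C(0,T;W^{1,1}(\R^N))$ already obtained from Theorem~4.2 of \cite{keimer2018existence} in Lemma~\ref{lem:linearized_wellposed}. Equation (a) then reads as a linear balance law with reaction term $g(d,p)$ and source $\alpha$ plus this nonlocal operator, so I would follow the characteristic method of \cite{keimer2018existence} (exactly as in Lemma~\ref{lem:linearized_wellposed}) to write a Duhamel representation along the characteristics \eqref{eq:char}, and set up a Banach fixed point in a ball of $C(0,T;L^\infty(\R^N))$. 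Because the nonlocal operator is bounded on $L^\infty$ and enters under the time integral, the fixed-point map contracts for $T$ small; the continuation argument of \cite{keimer2018existence} then extends existence and uniqueness to arbitrary $T>0$, yielding $q\in C(0,T;L^\infty(\R^N))$.

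With $q$ fixed, equation \eqref{adjoint_system}-(b) is a linear parabolic equation for $r$ driven by the operator $\mathcal{A}=-\operatorname{div}(D\nabla\,\cdot\,)$, the bounded zero-order coefficient $\partial_2\Gamma(I,d)$ and the source $-\partial_2 C(d,p)\,q$. Since $\partial_2\Gamma$ and $\partial_2 C$ are bounded by \textbf{(A10)}, this is precisely the situation treated in the parabolic part of Theorem~\ref{thm:wellposed}: I would invoke the strongly continuous semigroup $\{e^{-\tau\mathcal A}\}$, write the mild solution through the variation-of-constants formula, and close the $H^1$ estimate with Gronwall's inequality to obtain $r\in C(0,T;H^1(\R^N))$. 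Uniqueness follows by subtracting two solutions and running the same $L^1$/Gronwall argument used for $\tilde\xi$ in the uniqueness part of Lemma~\ref{lem:linearized_wellposed}, which forces the difference to vanish.

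The genuinely delicate step is the well-posedness of (a): the gradient sitting inside the nonlocal integral is what prevents a direct $L^\infty$ fixed point, and it is exactly Assumption \textbf{(A11)} together with the $W^{1,1}$-regularity of $p$ that allows me to move the derivative onto the smooth data and recover a bounded, $L^\infty$-stable nonlocal operator. Everything else — the time reversal, the parabolic solve for $r$, and the Gronwall-based uniqueness — is routine and parallels arguments already carried out in Theorem~\ref{thm:wellposed} and Lemma~\ref{lem:linearized_wellposed}.
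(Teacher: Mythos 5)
Your proposal is correct and follows essentially the same route as the paper: reverse time to convert the terminal conditions into initial data, use the vanishing of $K$ on $\partial\Omega$ (together with $\mathrm{V}'(0)=0$) to integrate by parts in the nonlocal term and turn the $\nabla q$-coupling into a bounded nonlocal operator acting on $q$ itself, and then observe that the transformed system has the same structure as the linearized system so that the fixed-point and parabolic arguments of Lemma~\ref{lem:linearized_wellposed} apply verbatim. The paper simply invokes that lemma at this point rather than re-running the characteristic/Banach fixed-point construction, but the content is identical, including your correct identification of the $W^{1,1}$-regularity of $p$ as the ingredient that makes the integrated-by-parts kernel admissible.
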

	\begin{proof}
		Let $s = T-t$ and define:
		\begin{equation}
			\bar{q}(s,x) = q(T-s,x), \quad \bar{r}(s,x) = r(T-s,x).
		\end{equation}
		The transformed system becomes:
		\begin{equation}
			\left\{
			\begin{array}{ll}
				\vspace{0.15cm}
			\displaystyle	\dfrac{\partial \bar{q}}{\partial s} - \mathrm{V}[w_p]\cdot\nabla \bar{q} - \int_{\Omega}K(y,x)\left(p\mathrm{V}'[w_p]\cdot\nabla \bar{q}\right)(T-s,y)dy + g(d,p)\bar{q} = \alpha, &(a)\\
				\vspace{0.15cm}
				\dfrac{\partial \bar{r}}{\partial s} - \operatorname{div}(D(x)\nabla \bar{r}) + 
				\partial_2\Gamma(I, d)\bar{r} = -\partial_2C(d,p)\bar{q}, &(b)\\
				\bar{q}(0,x) = \gamma, \quad \bar{r}(0,x) = 0.
			\end{array}
			\right.
		\end{equation}
		By using assumptions \textbf{(A11)} and \textbf{(A12)}, the integration by parts in the nonlocal term leads to:
		\begin{equation}
				\int_{\Omega}K(y,x)\left(p\mathrm{V}'[w_p]\cdot\nabla \bar{q}\right)(T-s,y)dy = -\int_{\Omega}\bar{q}(s,y)\operatorname{div}_y\Bigl(p(y)\mathrm{V}'[w_p(y)]K(y,x)\Bigr)dy,
		\end{equation}
		where the boundary terms vanish since $K(y,x) = 0$ for $y \in \partial\Omega$ by \textbf{(A10)} and $\mathrm{V}'(0) = 0$ by \textbf{(A11)}.
	Therefore, $(\bar{q},\bar{r})$ satisfies:
	\begin{equation}\label{tr_adj}
		\left\{
		\begin{array}{ll}
			\displaystyle \dfrac{\partial \bar{q}}{\partial s} - \operatorname{div}\left( \mathrm{V}[w_p]\bar{q}\right)  + \int_{\Omega}\bar{q}(y)\operatorname{div}_y\left(p(y)\mathrm{V}'[w_p(y)]K(y,x)\right)dy = -\left(g(d,p) + \operatorname{div}(\mathrm{V}[w_p])\right)\bar{q} + \alpha, \\[0.3cm]
			\dfrac{\partial \bar{r}}{\partial s} - \operatorname{div}(D(x)\nabla \bar{r}) + 	\partial_2\Gamma(I, d)\bar{r} = -\partial_2C(d,p)\bar{q}, \\[0.3cm]
			\bar{q}(0,x) = \gamma, \quad \bar{r}(0,x) = 0.
		\end{array}
		\right.
	\end{equation}
		The transformed system has a similar structure as the linearized system \eqref{linearized_system}. Therefore, by the same argument as in Lemma~ \ref{lem:linearized_wellposed}, the system \eqref{tr_adj} has a unique solution $(\bar{q},\bar{r})$, which imply the existence and uniqueness of $(q,r)$ for the adjoint system \eqref{adjoint_system}.
	\end{proof} 
	Now we can state and prove the necessary optimality conditions:
	\begin{theorem}[Necessary Optimality Conditions]
		\label{thm:optimality_conditions}
		Let $I^* \in \mathcal{U}_{ad}$ be an optimal control for the problem with
		corresponding state $(p^*, d^*)$. Then there exists an adjoint state $(q,r)$
		satisfying the adjoint system \eqref{adjoint_system} such that:
		\begin{equation}
			\int_0^T \left( 2\beta I^*(t) +  \int_{\mathbb{R}^N} \partial_1\Gamma(I^*, d^*) r(t,x) dx\right)
			\left( v(t) - I^*(t)\right) dt \geq 0
		\end{equation}
		for all $v \in \mathcal{U}_{ad}$.
	\end{theorem}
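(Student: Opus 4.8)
The plan is to combine the first-order optimality condition on the convex set $\mathcal{U}_{ad}$ with the duality encoded in the adjoint system \eqref{adjoint_system}, the purpose of the latter being to eliminate the state sensitivity $(\xi,\eta)$ from the derivative \eqref{dJ} of $J$.

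First I would record the variational inequality. Since $\mathcal{U}_{ad}$ defined in \eqref{Uad} is convex, for every $v\in\mathcal{U}_{ad}$ the point $I^{*}+\varepsilon(v-I^{*})$ belongs to $\mathcal{U}_{ad}$ for all $\varepsilon\in[0,1]$. As $I^{*}$ minimizes $J$ and $J$ is Gâteaux differentiable by Theorem~\ref{thm:cost_diff}, the difference quotient $\varepsilon^{-1}\big(J(I^{*}+\varepsilon(v-I^{*}))-J(I^{*})\big)$ is nonnegative and tends to $\langle J'(I^{*}),v-I^{*}\rangle$, whence
\[
\langle J'(I^{*}),v-I^{*}\rangle\ge 0\qquad\text{for all }v\in\mathcal{U}_{ad}.
\]
Setting $h:=v-I^{*}$ and substituting \eqref{dJ}, this reads
\[
\int_{0}^{T}2\beta I^{*}(t)h(t)\,dt+\alpha\int_{0}^{T}\!\!\int_{\R^{N}}\xi\,dx\,dt+\gamma\int_{\R^{N}}\xi(T,x)\,dx\ \ge\ 0,
\]
where $(\xi,\eta)$ solves the linearized system \eqref{linearized_system} in the direction $h$.

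Second, I would use the adjoint system to remove the two $\xi$--integrals. The mechanism is to pair \eqref{linearized_system}-(a) with $q$ and \eqref{linearized_system}-(b) with $r$, integrate over $[0,T]\times\R^{N}$, and integrate by parts. In time, the initial data $\xi(0,\cdot)=\eta(0,\cdot)=0$ and the terminal data $q(T,\cdot)=\gamma$, $r(T,\cdot)=0$ produce exactly the boundary contribution $\gamma\int_{\R^{N}}\xi(T,x)\,dx$ and otherwise transpose $\partial_{t}$ into $-\partial_{t}$. In space, $\operatorname{div}(\mathrm{V}[w_{p}]\xi)$ transposes to $-\mathrm{V}[w_{p}]\cdot\nabla q$, the self-adjoint term $-\operatorname{div}(D\nabla\eta)$ to $-\operatorname{div}(D\nabla r)$, and, using $w_{\xi}(t,x)=\int_{\Omega}K(x,y)\xi(t,y)\,dy$ and Fubini, the nonlocal term $\operatorname{div}(\mathrm{V}'[w_{p}]w_{\xi}p)$ transposes to the nonlocal operator $-\int_{\Omega}K(y,x)\big(p\,\mathrm{V}'[w_{p}]\cdot\nabla q\big)(t,y)\,dy$ appearing in \eqref{adjoint_system}-(a). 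Because \eqref{adjoint_system} is built to be the formal transpose of \eqref{linearized_system}, the $\xi$--pairing collapses the quantity $\alpha\int_{0}^{T}\!\int\xi+\gamma\int\xi(T)$ into a single cross term coupling $\eta$ and $q$ through the derivative of $C$, and the $\eta$--pairing (whose adjoint source in \eqref{adjoint_system}-(b) is precisely that cross term) converts it into $\int_{0}^{T}\!\int_{\R^{N}}\partial_{1}\Gamma(I^{*},d^{*})\,r\,h\,dx\,dt$. Substituting both identities back removes $\xi$ and $\eta$ and, since $h=v-I^{*}$ depends on $t$ only so that the spatial integral factors out, yields the asserted inequality.

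The step I expect to be the main obstacle is the rigorous justification of these integrations by parts in the available regularity classes, especially for the nonlocal transport equation. The pairing $\int q\,\xi$ is an $L^{\infty}$--$L^{1}$ duality that is finite because $\xi\in C(0,T;L^{1}(\R^{N}))$ and $q\in C(0,T;L^{\infty}(\R^{N}))$ by Lemma~\ref{lem:linearized_wellposed} and the adjoint well-posedness, while the pairings involving $\eta$ and $r$ are controlled in $H^{1}$. Transferring the convolution onto $q$ by Fubini and showing that no boundary term survives requires the higher regularity $p\in C(0,T;W^{1,1}(\R^{N}))$ (furnished under \textbf{(A5)}, \textbf{(A8)}, \textbf{(A9)}, \textbf{(A10)} by the regularity theorem of \cite{keimer2018existence}) together with the structural conditions $K=0$ on $\partial\Omega$ and $\mathrm{V}(0)=\mathrm{V}'(0)=0$ in \textbf{(A11)}--\textbf{(A12)}, exactly as already invoked in the well-posedness proof for \eqref{adjoint_system}. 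Once these manipulations are legitimate, the remaining bookkeeping of signs and the factoring of the $t$--only direction are routine.
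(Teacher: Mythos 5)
Your proposal follows essentially the same route as the paper: derive the variational inequality $\langle J'(I^{*}),v-I^{*}\rangle\ge 0$ from convexity of $\mathcal{U}_{ad}$ and Theorem~\ref{thm:cost_diff}, then pair the linearized system with $(q,r)$, integrate by parts, transpose the nonlocal term via Fubini, and use the two resulting identities to eliminate $\xi$ and $\eta$ from \eqref{dJ}. Your reading of the duality is the intended one (the adjoint source in \eqref{adjoint_system}-(b) must cancel the cross term $\int q\,\partial_{1}C\,\eta$, which incidentally suggests the $\partial_{2}C$ in the displayed adjoint equation is a typo for $\partial_{1}C$), and your flagged concern about justifying the integrations by parts is the same regularity issue the paper handles via \textbf{(A10)}--\textbf{(A12)}.
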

	\begin{proof}
		Let $I^*$ be an optimal control. For any $v \in \mathcal{U}_{ad}$ and
		$\varepsilon \in (0,1)$, define $I^\varepsilon = I^* + \varepsilon(v - I^*)$.
		Note that $I^\varepsilon \in \mathcal{U}_{ad}$ due to the convexity of
		$\mathcal{U}_{ad}$.\\
		By the optimality of $I^*$, we have:
		\begin{equation}
			0 \leq \lim_{\varepsilon \to 0^+} \frac{J(I^\varepsilon) -
				J(I^*)}{\varepsilon} = \langle J'(I^*), v - I^* \rangle.
		\end{equation}
		From Theorem \ref{thm:cost_diff}, we have:
		\begin{equation}
			\langle J'(I^*), v - I^* \rangle = \int_0^T \left[\alpha \int_{\mathbb{R}^N}
			\xi(t,x)dx + 2 \beta I^*(t)(v(t) - I^*(t))\right]dt + \gamma
			\int_{\mathbb{R}^N} \xi(T,x)dx
		\end{equation}
		where $(\xi, \eta)$ solves the linearized system with $h = v - I^*$.\\
		We multiply the first equation of the linearized system
		(\ref{linearized_system}-a) by q, and the second equation (\ref{linearized_system}-b) by r, then integrate
		over space and time:
			\begin{align*}
			&\int_0^T \int_{\mathbb{R}^N} q\left(\frac{\partial \xi}{\partial t} +
			\operatorname{div}(\mathrm{V}[w_{p^*}]\xi) +
			\operatorname{div}(\mathrm{V}'[w_{p^*}](w_\xi)p^*) - g(d^{*},p^{*})\xi + \partial_1 C(d^{*},p^{*})\eta\right) dxdt = 0 ,\\
			&\int_0^T \int_{\mathbb{R}^N} r\left(\frac{\partial \eta}{\partial t} -
			\operatorname{div}(D(x)\nabla \eta) - \partial_{2}\Gamma(I,d)\eta-\partial_{1}\Gamma(I,d)(v -
			I^*)\right) dxdt = 0.
		\end{align*}
		After integration by parts we get for $\xi$ equation:
		\begin{align*}
			&\gamma\int_{\mathbb{R}^N} \xi(T,x)dx - \int_0^T \int_{\mathbb{R}^N}
			\frac{\partial q}{\partial t}\xi dxdt - \int_0^T \int_{\mathbb{R}^N} \nabla q
			\cdot (\mathrm{V}[w_{p^*}]\xi) dxdt \\
			&- \int_0^T \int_{\mathbb{R}^N} \nabla q \cdot (\mathrm{V}'[w_{p^*}]w_\xi p^*
			)dxdt \\
			&= \int_0^T \int_{\mathbb{R}^N} qF'(p^*)\xi dxdt - \int_0^T \int_{\mathbb{R}^N}
			q\partial_2 C\xi dxdt - \int_0^T \int_{\mathbb{R}^N}
			q\partial_1 C\eta dxdt.
		\end{align*}
		We have by using Fubini theorem:
		{\footnotesize
			\begin{align*}
				\int_0^T \int_{\mathbb{R}^N} \nabla q(t,x) \cdot
				(\mathrm{V}'[w_{p^*}(t,x)]w_\xi(t,x) p^*(t,x) )dxdt&=\int_0^T
				\int_{\mathbb{R}^N}\int_{\Omega}\nabla q(t,x) \cdot
				\mathrm{V}'[w_{p^*}(t,x)]k(x,y)\xi(t,y)dx dy dt \\
				&=\int_0^T \int_{\mathbb{R}^N}\xi(t,x)\int_{\Omega}
				\nabla q(t,y) \cdot (\mathrm{V}'[w_{p^*}(t,y)]k(y,x))dy dx dt.
			\end{align*}
		}
		Hence we get:
		{\footnotesize
			\begin{align*}
				\gamma\int_{\mathbb{R}^N} \xi(T,x)dx&+\int_0^T \int_{\mathbb{R}^N}\xi\left[ -
				\frac{\partial q}{\partial t}- \nabla q \cdot (\mathrm{V}[w_{p^*}]) 
				- \int_{\mathbb{R}^N} \nabla q \cdot (\mathrm{V}'[w_{p^*}]p^*k(x,\cdot))dx -
				g(d^{*},p^{*})\; q\right]  dxdt \\&- \int_0^T
				\int_{\mathbb{R}^N} q\partial_1 C(d^{*},p^{*})\eta dxdt=0.
			\end{align*}
		}
		By substituting \eqref{adjoint_system} we get:
		\begin{equation}\label{eq1}
			\gamma\int_{\mathbb{R}^N} \xi(T,x)dx+\alpha \int_0^T  \int_{\mathbb{R}^N}
		\xi(t,x)dx dt+ \int_0^T \int_{\mathbb{R}^N} q\partial_{1} C(d^{*},p^{*})\eta dxdt=0.
		\end{equation} 
		In the other hand, we have for the second equation after integrating by parts: 
		\begin{align*}
			&\int_{\mathbb{R}^N} r(T,x)\eta(T,x)dx - \int_{\mathbb{R}^N} r(x,0)\eta(x,0)dx
			- \int_0^T \int_{\mathbb{R}^N} \frac{\partial r}{\partial t}\eta dxdt \\
			&+ \int_0^T \int_{\mathbb{R}^N} \nabla r \cdot (D(x)\nabla \eta) dxdt -
			\int_0^T \int_{\mathbb{R}^N} r\partial_2\Gamma(I^*, d^*)\eta dxdt \\
			&= \int_0^T \int_{\mathbb{R}^N} r\partial_1\Gamma(I^*, d^*)(v - I^*) dxdt,
		\end{align*}
		after substituting \eqref{adjoint_system} we get:
		\begin{equation}\label{eq2}
			\int_0^T \int_{\mathbb{R}^N} q\partial_1 C(d^{*},p^{*})\eta dxdt=-\int_0^T
			\int_{\mathbb{R}^N} r\partial_1\Gamma(I^*, d^*)(v - I^*) dxdt.
		\end{equation}
		By substituting \eqref{eq1} and \eqref{eq2} back into \eqref{dJ} we obtain:
		\begin{equation}
			\int_0^T \Bigl(2\beta I^*(t) + \int_{\mathbb{R}^N} \partial_1\Gamma(I^*, d^*)r(x,t)dx\Bigr)\Bigl(v(t) -
			I^*(t)\Bigr)dt \geq 0
		\end{equation}
		for all $v \in \mathcal{U}_{ad}$, which completes the proof.
	\end{proof}
		\begin{cor}[Characterization of Optimal Control]
		Let Assumptions \textbf{(A1)-(A12)} hold and let $I^* \in \mathcal{U}_{ad}$ be
		an optimal control with corresponding adjoint state $r$. Then:
		\begin{equation}\label{I_opt}
			I^*(t) = \max\left\lbrace 0,\min\left\lbrace
			M,-\frac{1}{2\beta}\int_{\mathbb{R}^N} \partial_1\Gamma(I^*, d^*)r(t,x)dx\right\rbrace
			\right\rbrace  \quad \text{a.e. } t \in [0,T].
		\end{equation}
	\end{cor}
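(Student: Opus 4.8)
The plan is to read off the pointwise projection formula \eqref{I_opt} directly from the variational inequality proved in Theorem~\ref{thm:optimality_conditions}. Recall that $\mathcal{U}_{ad}$ is the box $\{v \in L^\infty(0,T): 0 \le v(t) \le M \text{ a.e.}\}$ of \eqref{Uad}, where we write $M$ for the bound $M_{tol}$. Introduce the switching function
\begin{equation*}
	\Psi(t) := \int_{\R^N} \partial_1\Gamma(I^*, d^*)(t,x)\, r(t,x)\, dx, \qquad \Lambda(t) := 2\beta\, I^*(t) + \Psi(t),
\end{equation*}
so that Theorem~\ref{thm:optimality_conditions} states precisely that $\int_0^T \Lambda(t)\bigl(v(t)-I^*(t)\bigr)\,dt \geq 0$ for every $v \in \mathcal{U}_{ad}$. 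Here $\Psi$, and therefore $\Lambda$, is a well-defined element of $L^1(0,T)$ under the hypotheses of that theorem, which is all the regularity the argument requires.

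The central step is to upgrade this integral inequality to the pointwise statement
\begin{equation*}
	\Lambda(t)\bigl(w - I^*(t)\bigr) \geq 0 \qquad \text{for all } w \in [0,M] \text{ and a.e. } t \in [0,T].
\end{equation*}
I would prove this by localization. Fix $w \in [0,M]$ and, for a small interval $J_\delta = (t_0-\delta, t_0+\delta)\cap[0,T]$, take the competitor $v$ equal to $w$ on $J_\delta$ and to $I^*$ elsewhere; since $w \in [0,M]$ and $I^* \in \mathcal{U}_{ad}$, we have $v \in \mathcal{U}_{ad}$, so the variational inequality reduces to $\int_{J_\delta}\Lambda(t)\bigl(w-I^*(t)\bigr)\,dt \geq 0$. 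Dividing by $|J_\delta|$ and letting $\delta \to 0$, the Lebesgue differentiation theorem yields $\Lambda(t_0)\bigl(w-I^*(t_0)\bigr)\geq 0$ at every Lebesgue point $t_0$ of the integrand. Running this over a countable dense set of values $w \in [0,M]$ produces a single full-measure set of admissible $t$, and since the left-hand side is affine, hence continuous, in $w$, the inequality extends to all $w\in[0,M]$ at each such $t$.

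The remainder is a routine case analysis on the position of $I^*(t)$ in the box $[0,M]$. If $0 < I^*(t) < M$, competitors $w$ on both sides of $I^*(t)$ force $\Lambda(t)=0$, so $I^*(t) = -\tfrac{1}{2\beta}\Psi(t)$ and this value lies in $(0,M)$; if $I^*(t)=0$, only $w\geq 0$ is admissible, forcing $\Lambda(t)\geq 0$, hence $-\tfrac{1}{2\beta}\Psi(t)\leq 0$; and if $I^*(t)=M$, only $w\leq M$ is admissible, forcing $\Lambda(t)\leq 0$, hence $-\tfrac{1}{2\beta}\Psi(t)\geq M$. In all three cases the quantity $\max\bigl\{0,\min\{M,-\tfrac{1}{2\beta}\Psi(t)\}\bigr\}$ equals $I^*(t)$, which is exactly \eqref{I_opt}.

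The only genuine obstacle is the localization passage from the integral inequality to its pointwise form: it requires constructing admissible local perturbations of $I^*$ that respect the box constraint and invoking the Lebesgue differentiation theorem uniformly in $w$ via the density-plus-continuity argument above. Once this is in place, the projection characterization follows mechanically from the case analysis.
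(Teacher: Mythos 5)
Your proposal is correct and follows essentially the same route as the paper: both start from the variational inequality of Theorem~\ref{thm:optimality_conditions}, introduce the switching function $2\beta I^*(t) + \int_{\R^N}\partial_1\Gamma(I^*,d^*)r\,dx$, and recover the projection formula by a case analysis. The only difference is technical: the paper passes to pointwise conclusions by contradiction with the global competitors $\max\{0,I^*-\varepsilon\}$ and $\min\{M_{tol},I^*+\varepsilon\}$, whereas you localize the perturbation and invoke the Lebesgue differentiation theorem --- your version is in fact slightly more careful, since the paper's competitor is not restricted to the set where the switching function has a definite sign, so its claimed strict inequality needs exactly the localization you supply.
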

	\begin{proof}
		Let $I^*$ be an optimal control for problem (\eqref{OC_problem}). From Theorem~\ref{thm:optimality_conditions}, we have the following variational inequality:
		\begin{equation}
			\int_0^T \left(2\beta I^*(t) + \int_{\mathbb{R}^N} \partial_1\Gamma(I^*, d^*)r(t,x)dx\right)(v(t) - I^*(t))dt \geq 0, \quad \forall v \in \mathcal{U}_{ad}.
		\end{equation}
		
		We introduce the switching function $\displaystyle \psi(t) := 2\beta I^*(t) + \int_{\mathbb{R}^N} \partial_1\Gamma(I^*, d^*)r(t,x)dx$. The variational inequality can thus be rewritten as:
		\begin{equation}
			\int_0^T \psi(t)(v(t) - I^*(t))dt \geq 0, \quad \forall v \in \mathcal{U}_{ad}.
		\end{equation}
		Let us consider first the case where $\psi(t) > 0$ for some $t \in [0,T]$. Let suppose, by contradiction, that there exists a set of positive measure where $\psi(t) > 0$ and $I^*(t) > 0$. For sufficiently small $\varepsilon > 0$, we can define an admissible control
		$v_\varepsilon(t):=
				\max\{0, I^*(t) - \varepsilon\}.$\\
		For this choice of $v_\varepsilon$, we have $v_\varepsilon(t) - I^*(t) = -\varepsilon < 0$, which leads to
	$\displaystyle
			\int_0^T \psi(t)(v_\varepsilon(t) - I^*(t))dt < 0
		$,
which contradicts the optimality condition, thus $I^*(t) = 0$ whenever $\psi(t) > 0$.
		
		For the case where $\psi(t) < 0$, we can apply a similar argument. Let suppose there exists a set of positive measure where $\psi(t) < 0$ and $I^*(t) < M_{tol}$. We define $v_\varepsilon(t) =
				\min\{M_{tol}, I^*(t) + \varepsilon\}$.
		This choice leads to a similar contradiction, establishing that $I^*(t) = M_{tol}$ whenever $\psi(t) < 0$.
		
		When $\psi(t) = 0$, we have directly:
		\begin{equation*}
			2\beta I^*(t) + \int_{\mathbb{R}^N} \partial_1\Gamma(I^*, d^*)r(t,x)dx = 0.
		\end{equation*}
		Therefore:
		\begin{equation*}
			I^*(t) = -\frac{1}{2\beta}\int_{\mathbb{R}^N} \partial_1\Gamma(I^*, d^*)r(t,x)dx.
		\end{equation*}
			The constraints $0 \leq I^*(t) \leq M_{tol}$ must hold for all $t \in [0,T]$. Combining these constraints with our previous analysis, we obtain the complete characterization:
		\begin{equation}
			I^*(t) = \max\left\{0, \min\left\{M_{tol}, -\frac{1}{2\beta}\int_{\mathbb{R}^N} \partial_1\Gamma(I^*, d^*)r(t,x)dx\right\}\right\} \quad \text{a.e. } t \in [0,T].
		\end{equation}
	\end{proof}
	\section{Numerical Simulations}\label{sec:5}
	
	For numerical implementation of the optimal control strategy characterized by equation \eqref{I_opt}, we formulate a computational approach based on the variational structure of the control problem and the adjoint system derived in Section~\ref{sec4}. The iterative scheme is  as follows:
	\begin{center}
		\begin{itemize}
			\item At iteration $k$, given the current control approximation $I^k \in U_{ad}$.
			\begin{itemize}
				\item Compute the state variables $(p^k, d^k)$ by solving the coupled system \eqref{system1}.
				\item Compute the adjoint variables $(q^k, r^k)$ through backward integration of system \eqref{adjoint_system}.
				\item Evaluate the gradient of the cost functional $\nabla J(I^k)$.
			\end{itemize}
		\item Update $I^{k+1} = \Pi_{U_{ad}}\left(I^k - \alpha_k \nabla J(I^k)\right)$, where $\Pi_{U_{ad}}$ denotes the projection operator onto the admissible set $U_{ad}$ and $\alpha_k > 0$ represents the step size.
		\end{itemize}
	\end{center}
 While this iterative procedure provides a practical realization of the theoretical framework established in preceding sections, a formal convergence analysis of the numerical scheme lies beyond the scope of the present work. The numerical simulations were implemented using Python $3.13$ with NumPy library.
	
	\subsection{Discretization and Parameter Configuration}
	
	The computational domain $\Omega = [0,1]$ was discretized with a uniform spatial grid of $N_x = 200$ points, while the time domain $[0,T]$ with $T = 1$ was discretized according to the Courant-Friedrichs-Lewy (CFL) condition with $\text{CFL} = 0.025$. Let $\{x_i\}_{i=1}^{N_x}$ and $\{t_n\}_{n=1}^{N_t}$ denote the spatial and temporal discretization points, respectively. The initial condition $p_0(x)$ was specified as a normalized Gaussian function centered at $x = 0.5$ with standard deviation $0.1$,	while $d_0(x) \equiv 0$ throughout the domain. For the abstract functions introduced in system \eqref{system1}, we specify the nonlocal velocity field $\mathrm{V}[w_p] = \kappa \cdot w_p(t,x)$, the logistic growth function $F(p) = r \cdot p \cdot (1-p/K_p)$, the drug-induced cell death term $C(d,p) = \delta \cdot d \cdot p$, and the drug exchange and clearance mechanism $\Gamma(I,d) = \Gamma \cdot (I-d) - \lambda \cdot d$ (see \cite{trachette1999mathematical}). Where $w_p(t, x) = \int_{\Omega} K(x, y)p(t, y) \, dy$ represents the nonlocal term with $K$ defined as a Gaussian kernel with standard deviation $\sigma$.
	
	For the spatial discretization, the advection term in the equation \eqref{system1}-(b) was approximated using a Lax-Friedrichs scheme, the diffusion term in the equation \eqref{system1}-(c) was discretized using a standard second-order central difference approximation, and the nonlocal convolution integral \eqref{nonlocalterm} was computed using the Fast Fourier Transform (FFT). The resulting semi-discrete system was integrated in time using an explicit Euler method with time step constrained by the CFL condition.
	
	\subsection{Results and Analysis}
The parameter values used in simulations were: $r = 1.2$, $K_p = 1.0$, $\delta = 0.8$, $\kappa = 0.05$, $\sigma = 0.01$, $D = 0.05$, $\Gamma = 2.5$, and $\lambda = 0.3$. For the cost functional defined in equation \eqref{Jcost}, the weights were $\alpha = 1.0$, $\beta = 0.1$, and $\gamma = 1.0$, with the maximum tolerable drug concentration constrained to $M_{\text{tol}} = 4.0$.
		\begin{figure}[H]
		\centering
		\includegraphics[width=0.5\textwidth]{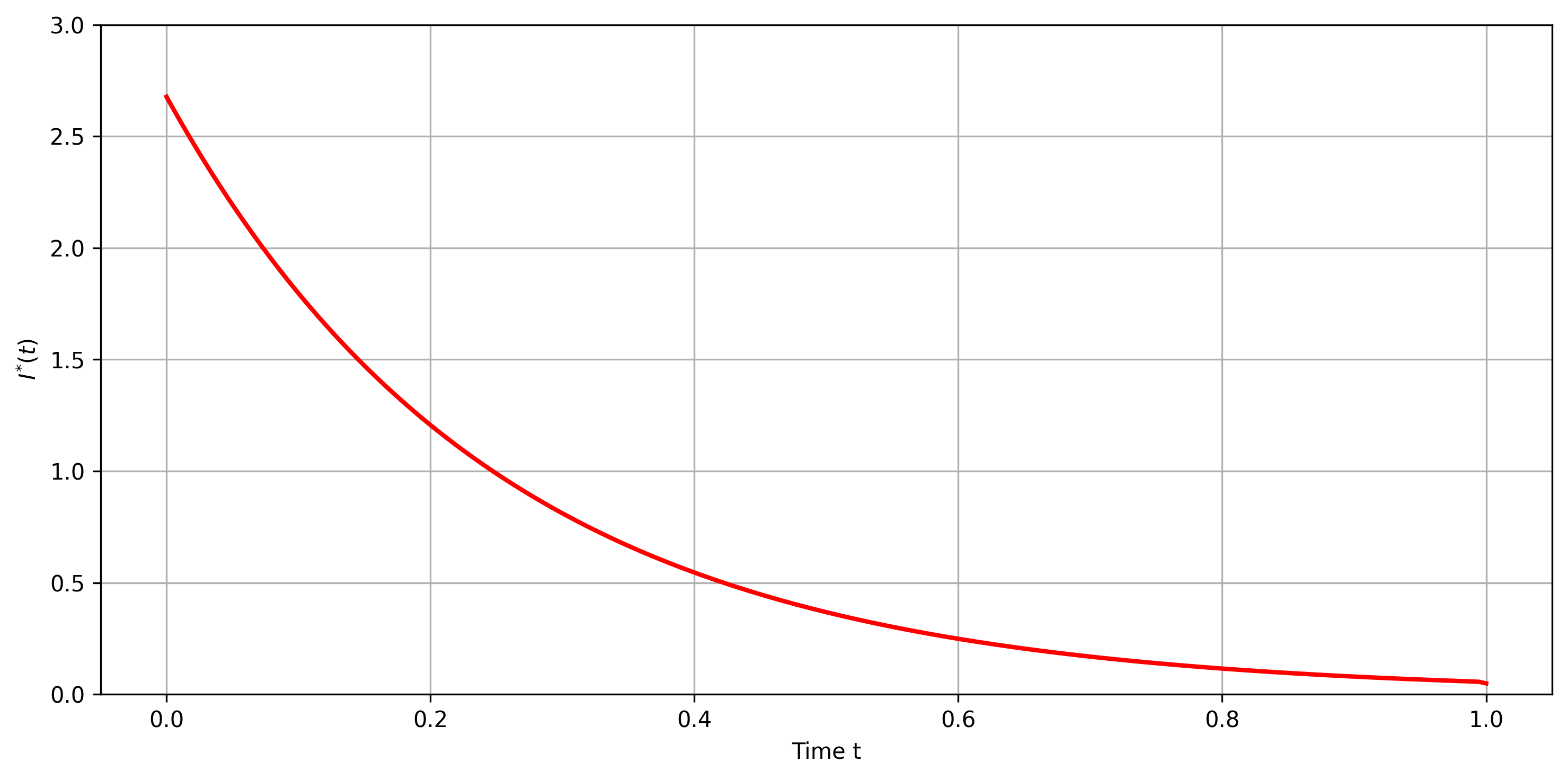}
		\caption{Optimal control profile $I^*(t)$ over the treatment interval $[0,T]$.}
		\label{fig:optimal-control}
	\end{figure}
	The computed optimal control $I^*(t)$ is a monotonically decreasing function over the treatment interval, as shown in Figure \ref{fig:optimal-control}. This decay is pronounced during the initial phase $t \in [0,0.3]$, followed by a more gradual reduction, suggesting that the optimal strategy entails substantial initial drug administration followed by progressive tapering. The asymptotic approach of $I^*(t)$
	toward zero as $t \rightarrow T$ reflects the balance between tumor reduction efficacy and control cost in the objective functional.

	The evolution of tumor density under optimal control shows a differences compared to the uncontrolled case, as illustrated in Figure \ref{fig:tumor-3d} and \ref{fig:tumor-contour}. Let $p_c(t,x)$ and $p_u(t,x)$ denote the tumor density functions under controlled and uncontrolled conditions, respectively. The relation $\sup_{x \in \Omega} p_c(t,x) < \sup_{x \in \Omega} p_u(t,x)$
	holds for all $t \in (0,T]$. The spatial profiles at discrete time points (Figure \ref{fig:tumor-profiles}) reveal progressive divergence between the controlled and uncontrolled distributions, having in approximately $27\%$ reduction in peak tumor density by $t=0.9$.
	\begin{figure}[H]
	\centering
	\begin{subfigure}{0.48\textwidth}
		\includegraphics[width=0.8\textwidth]{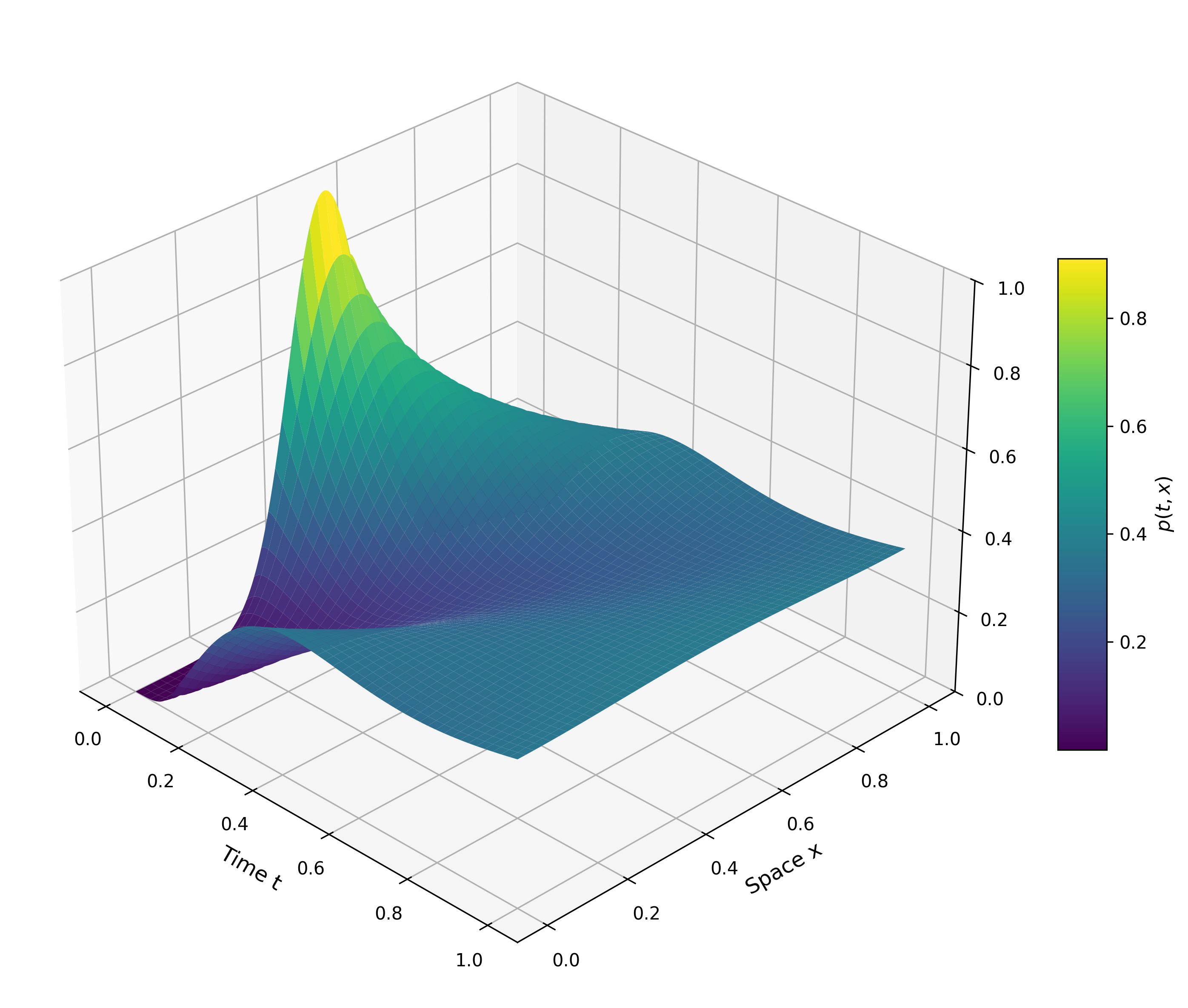}
		\caption{With optimal control}
	\end{subfigure}
	\hfill
	\begin{subfigure}{0.48\textwidth}
		\includegraphics[width=0.8\textwidth]{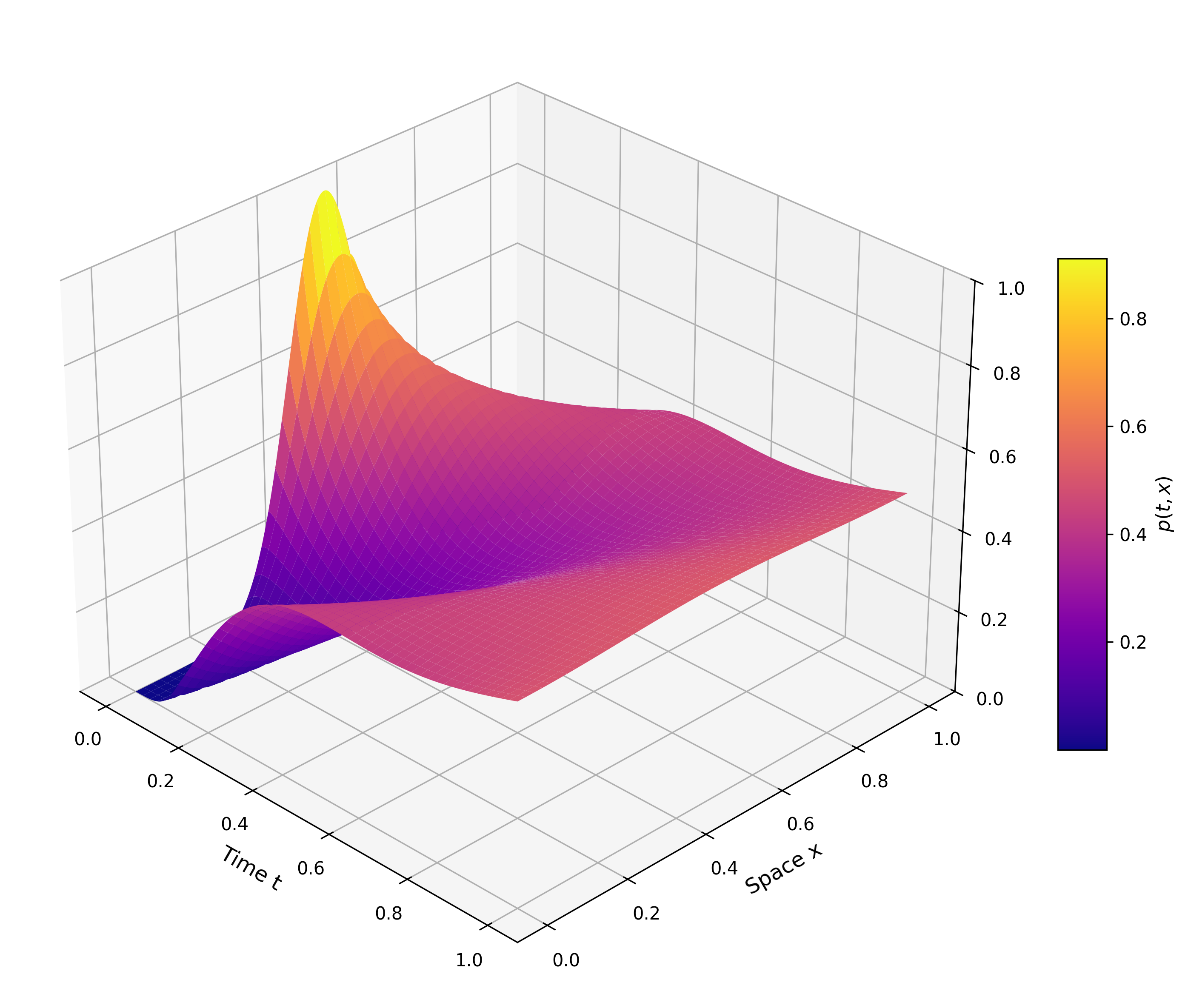}
		\caption{Without control}
	\end{subfigure}
	\caption{Three-dimensional visualization of tumor density evolution $p(t,x)$.}
	\label{fig:tumor-3d}
\end{figure}
\begin{figure}[H]
	\centering
	\begin{subfigure}{0.48\textwidth}
		\includegraphics[width=0.8\textwidth]{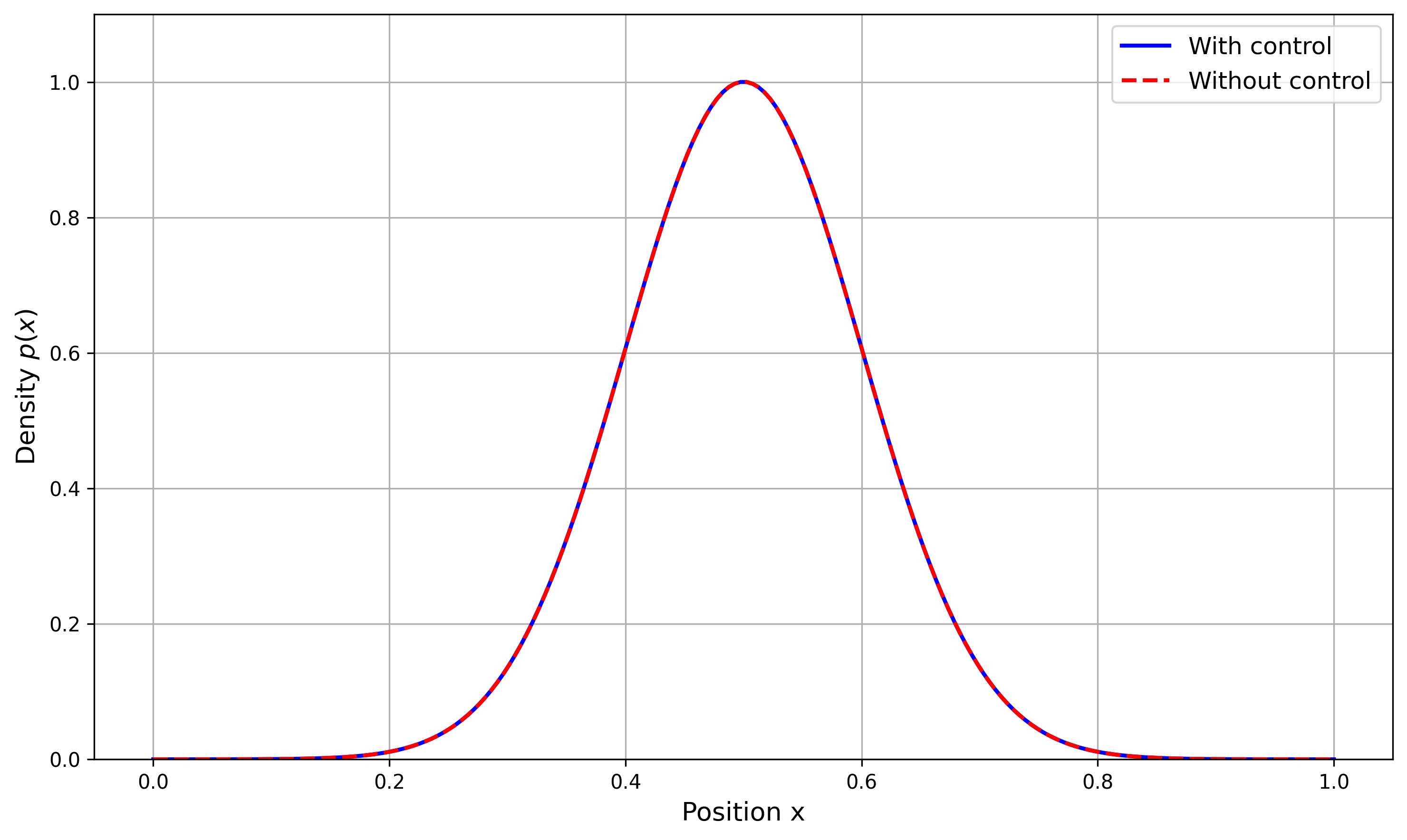}
		\caption{$t=0$ (initial distribution)}
	\end{subfigure}
	\hfill
	\begin{subfigure}{0.48\textwidth}
		\includegraphics[width=0.8\textwidth]{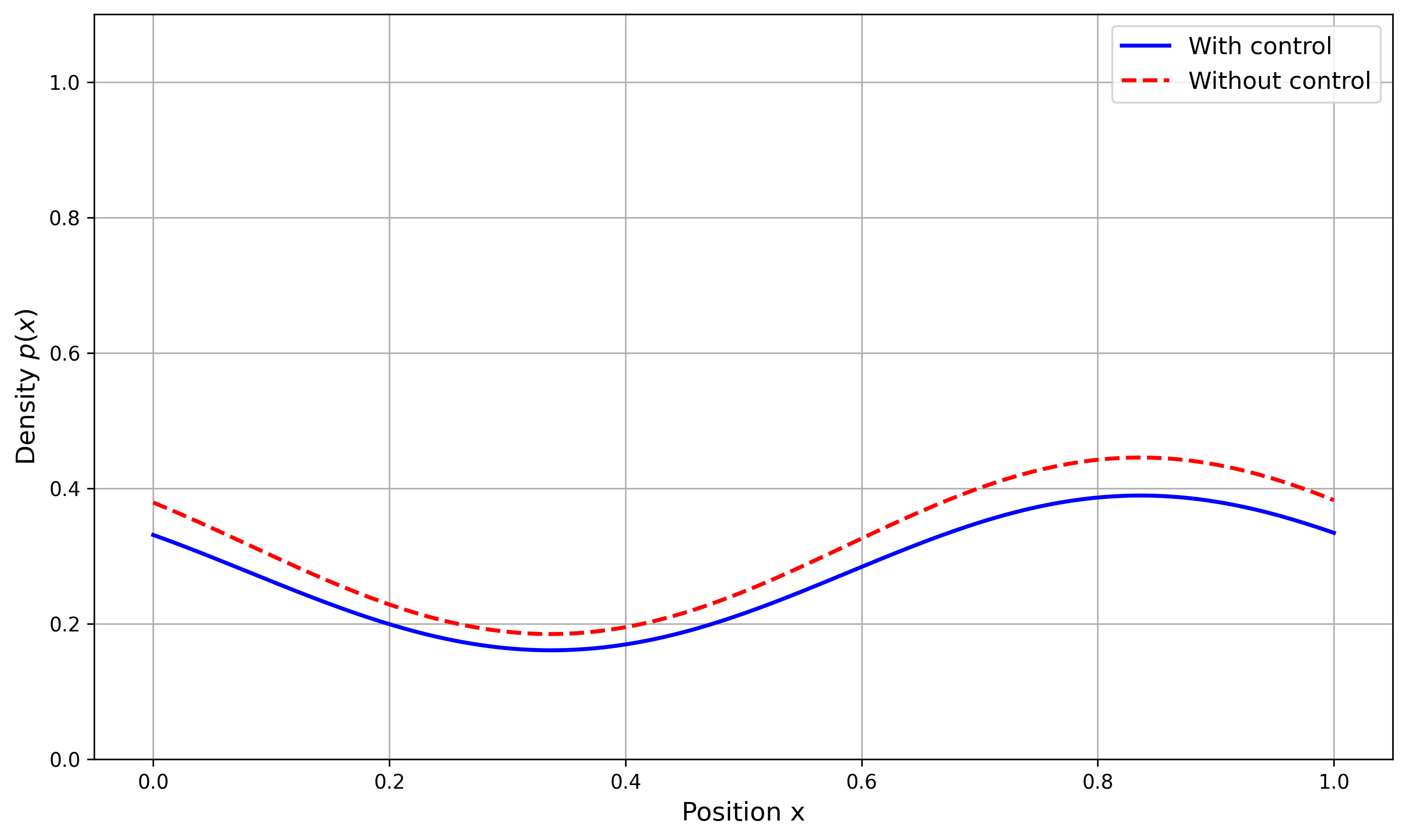}
		\caption{$t=0.3$}
	\end{subfigure}
	
	\vspace{0.25cm}
	
	\begin{subfigure}{0.48\textwidth}
		\includegraphics[width=0.8\textwidth]{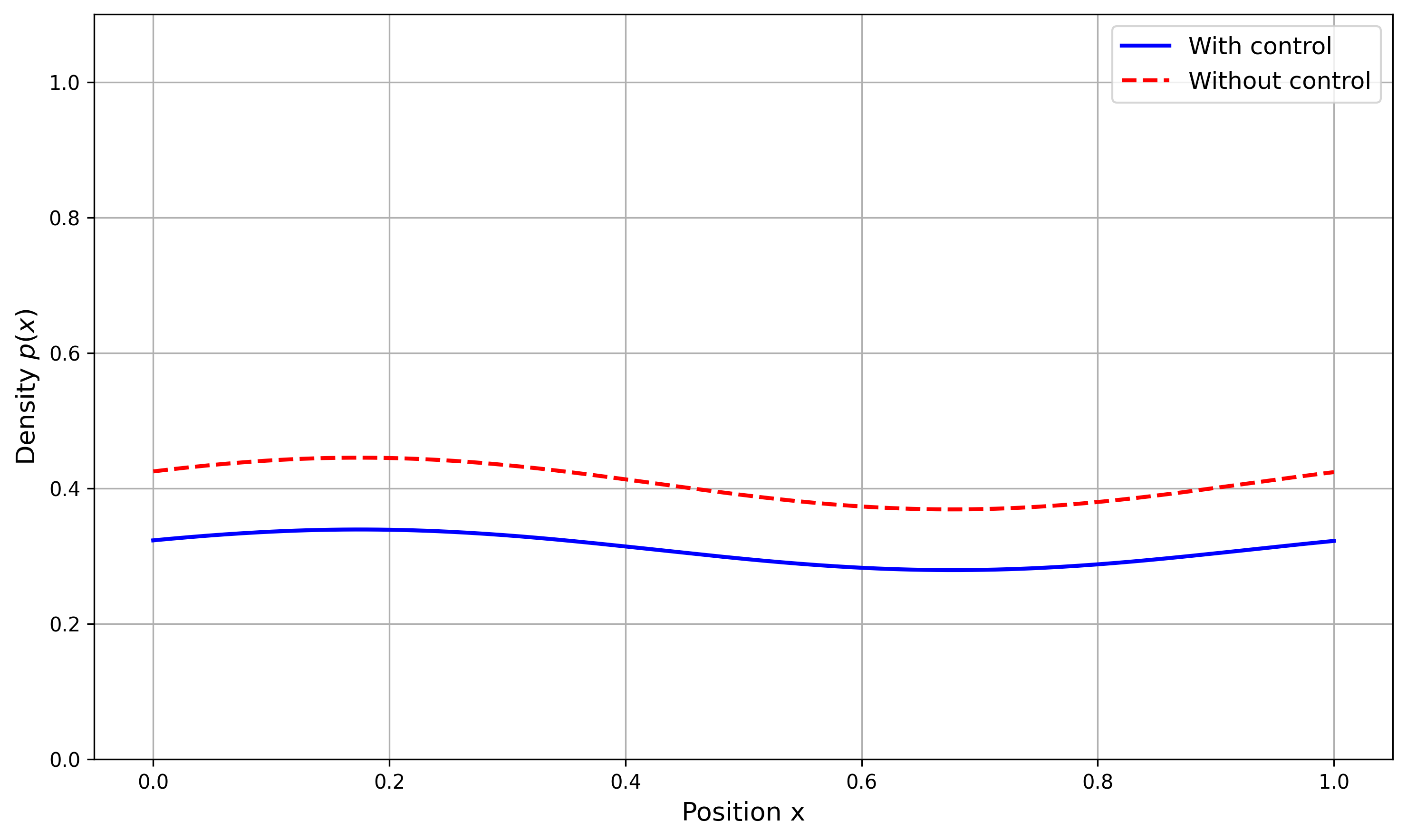}
		\caption{$t=0.7$}
	\end{subfigure}
	\hfill
	\begin{subfigure}{0.48\textwidth}
		\includegraphics[width=0.8\textwidth]{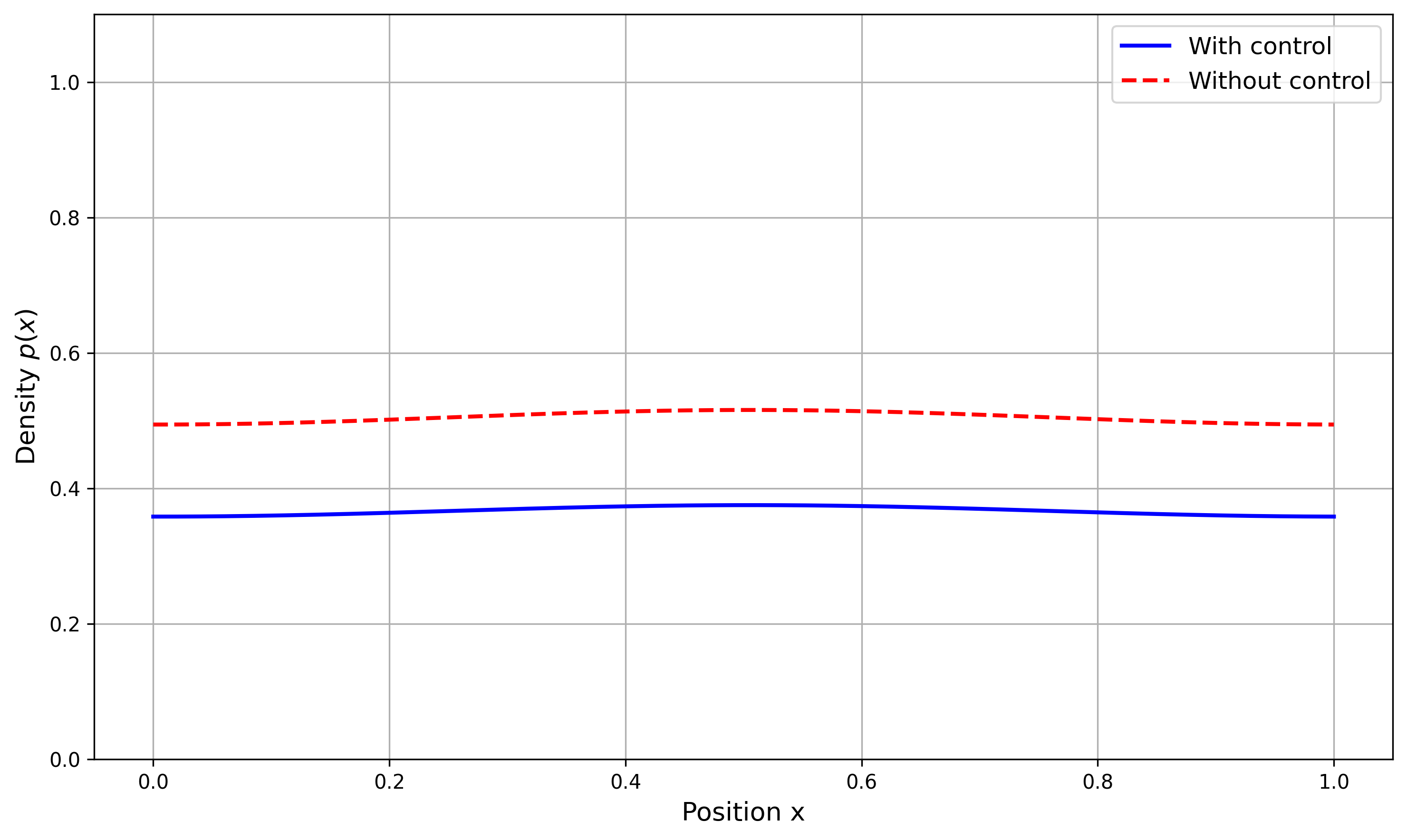}
		\caption{$t=0.9$}
	\end{subfigure}
	\caption{Tumor density profiles $p(t,x)$ at selected time points comparing controlled (solid blue) and uncontrolled (dashed red) cases.}
	\label{fig:tumor-profiles}
\end{figure}
\begin{figure}[H]
	\centering
	\begin{subfigure}{0.48\textwidth}
		\includegraphics[width=0.8\textwidth]{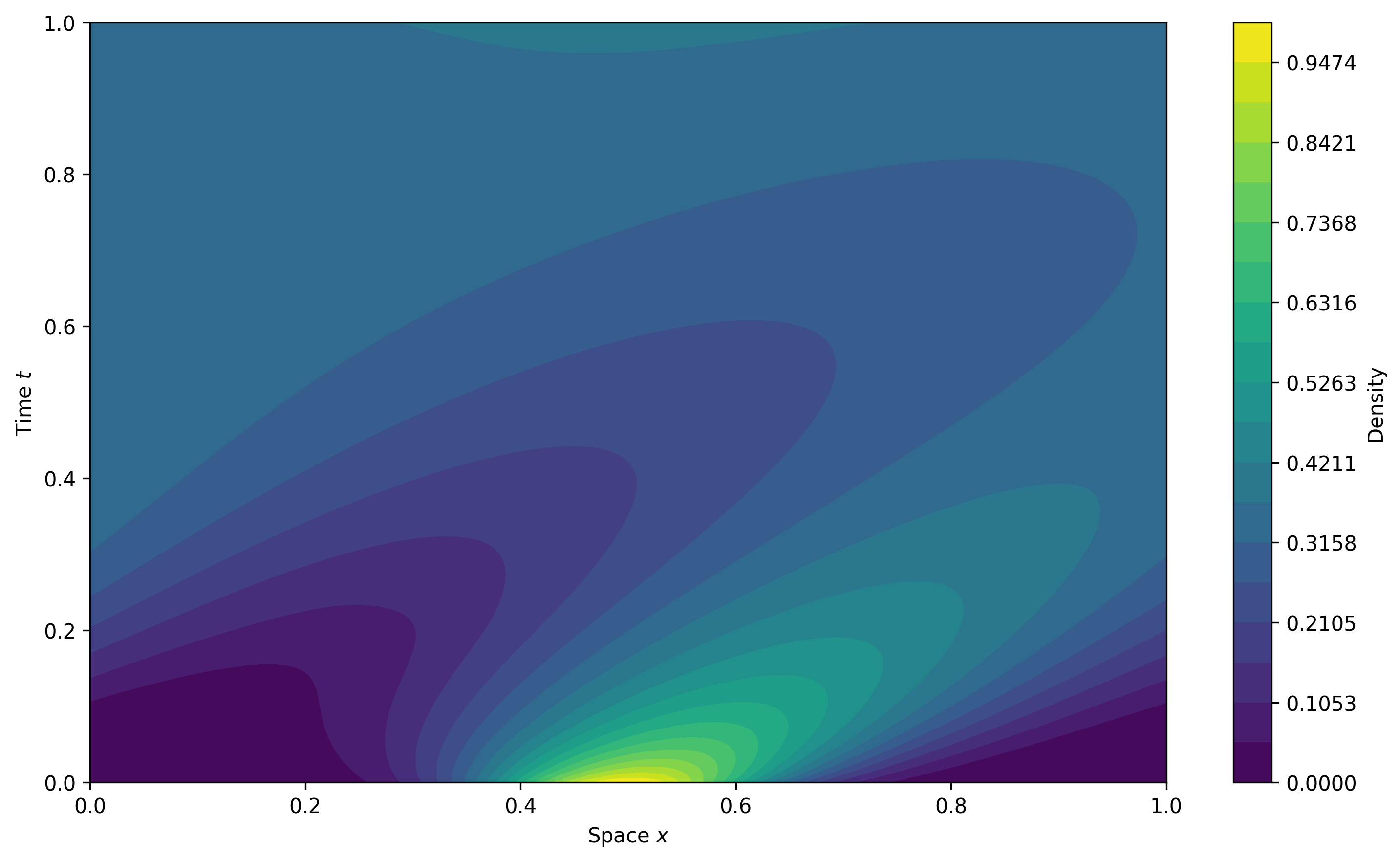}
		\caption{With optimal control}
	\end{subfigure}
	\hfill
	\begin{subfigure}{0.48\textwidth}
		\includegraphics[width=0.8\textwidth]{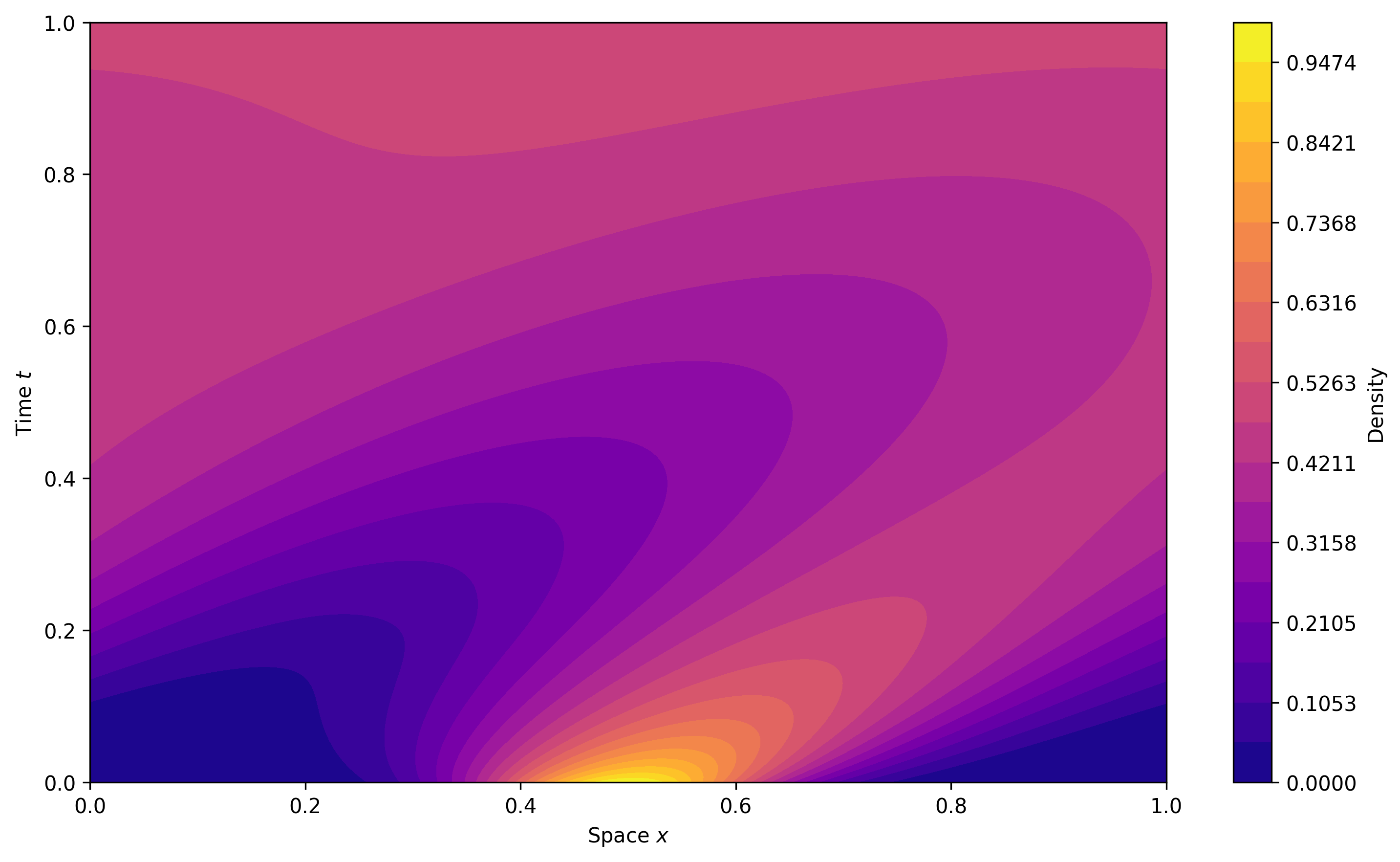}
		\caption{Without control}
	\end{subfigure}
	\caption{Contour plots of tumor density $p(t,x)$ in the $(x,t)$ plane.}
	\label{fig:tumor-contour}
\end{figure}

	The distribution of drug concentration $d(t,x)$
(Figure \ref{fig:drug-3d}) shows that this concentration is reaching a maximum value at $t \approx T/2$ despite the monotonically decreasing control input. This phase lag phenomenon emerges from the interaction between the source term and the diffusion operator, with the spatial homogeneity of $d(t,x)$ in contrast to the heterogeneous distribution of tumor density.
\begin{figure}[H]
	\centering
	\begin{subfigure}{0.48\textwidth}
		\includegraphics[width=0.8\textwidth]{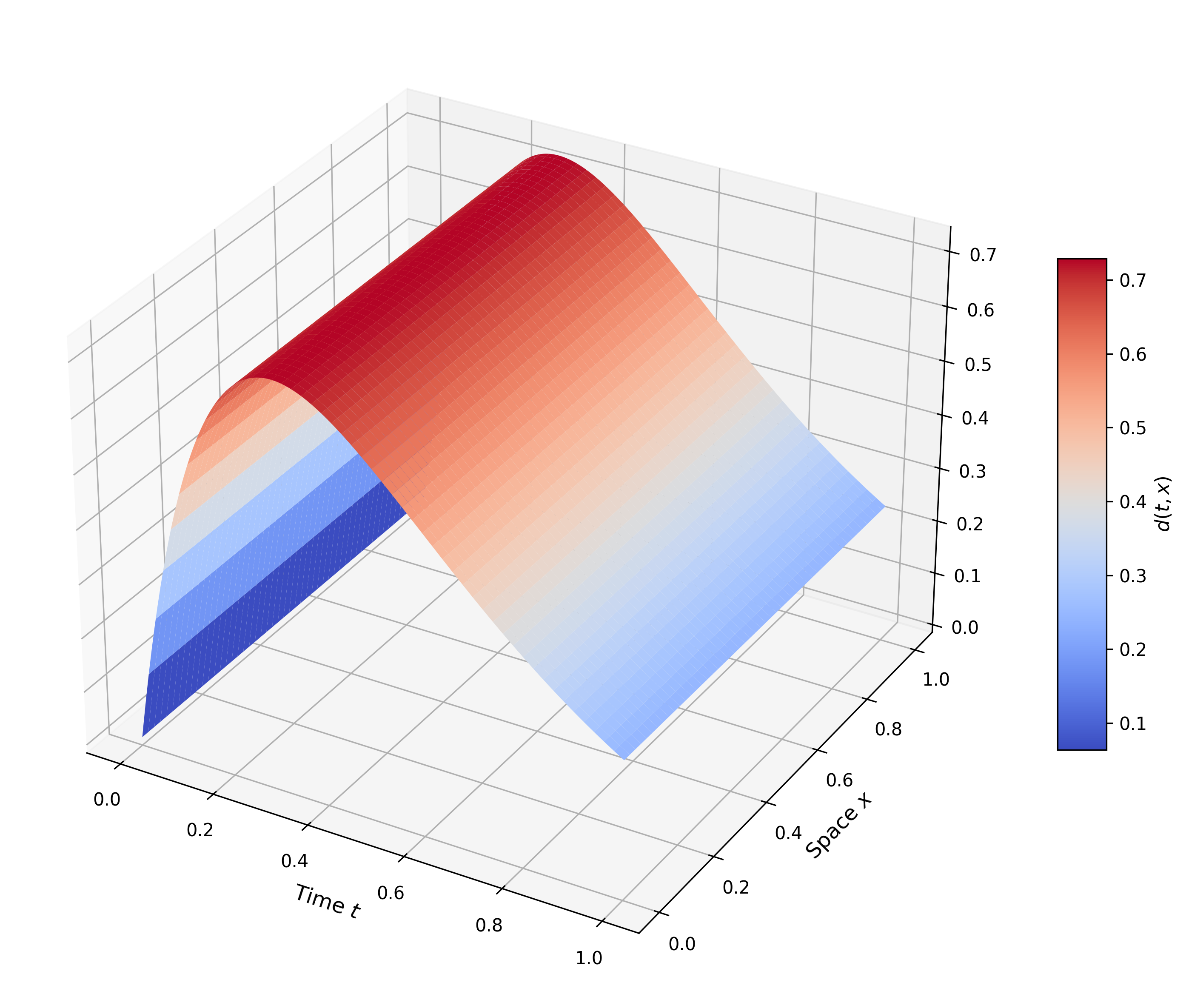}
		\caption{With optimal control}
	\end{subfigure}
	\hfill
	\begin{subfigure}{0.48\textwidth}
		\includegraphics[width=0.8\textwidth]{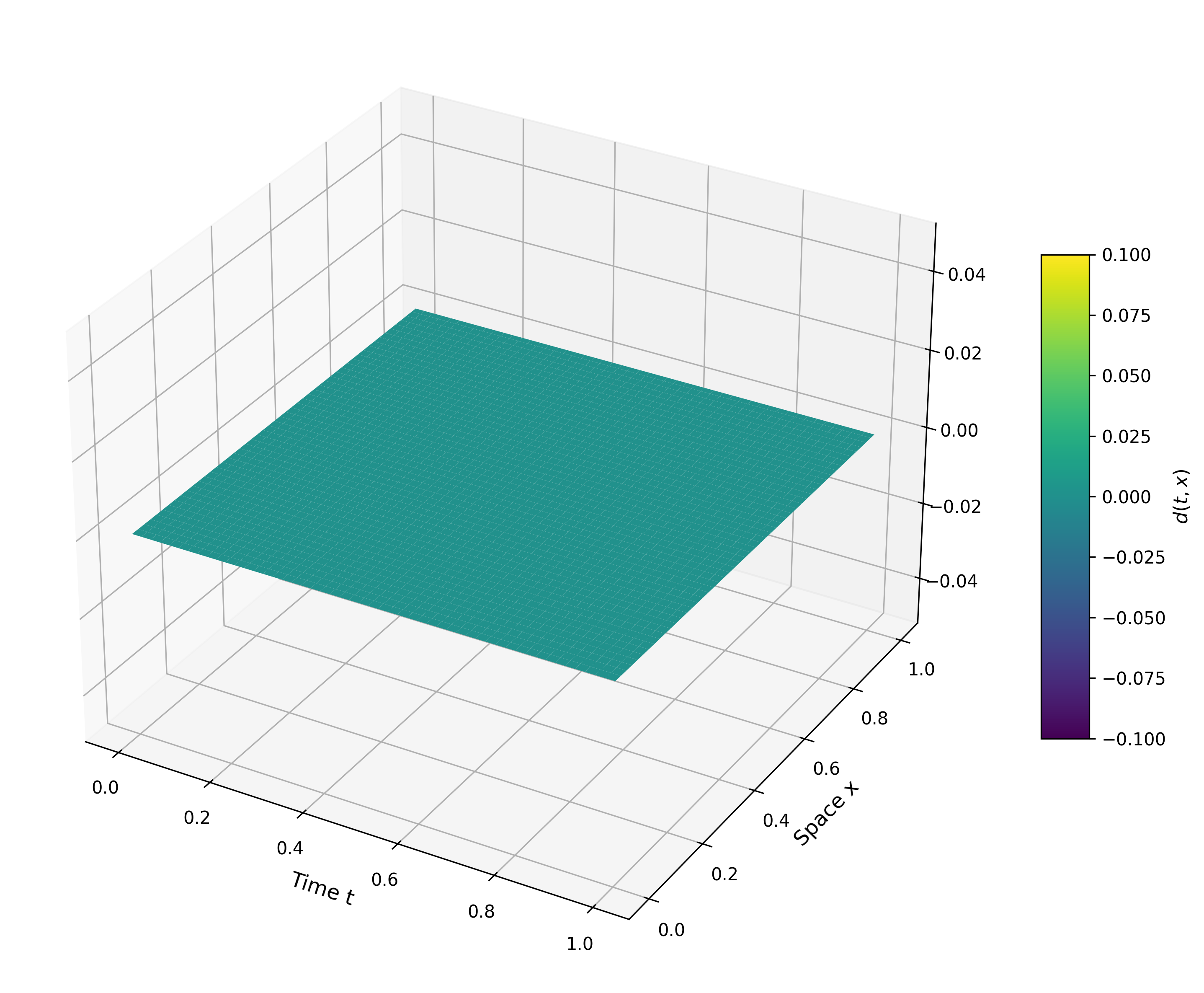}
		\caption{Without control}
	\end{subfigure}
	\caption{Three-dimensional visualization of drug concentration $d(t,x)$. }
	\label{fig:drug-3d}
\end{figure}
	To quantify the efficacy of the optimal control strategy, we define functionals $M_c(t) = \int_{\Omega} p_c(t,x) \, dx$
	and $M_u(t) = \int_{\Omega} p_u(t,x) \, dx$, representing the tumor mass under optimal control and without control, respectively. The relative improvement metric $ \Delta(t) = \frac{M_u(t) - M_c(t)}{M_u(t)} \times 100\%$
shows an increasing behavior for $t \in [0,T]$, with $\lim_{t \rightarrow T} \Delta(t) \approx 27\%$ (Figure \ref{fig:relative-improvement}), which quantifies the reduction achieved through the implementation of optimal control.
\begin{figure}[H]
	\centering
	\includegraphics[width=0.5\textwidth]{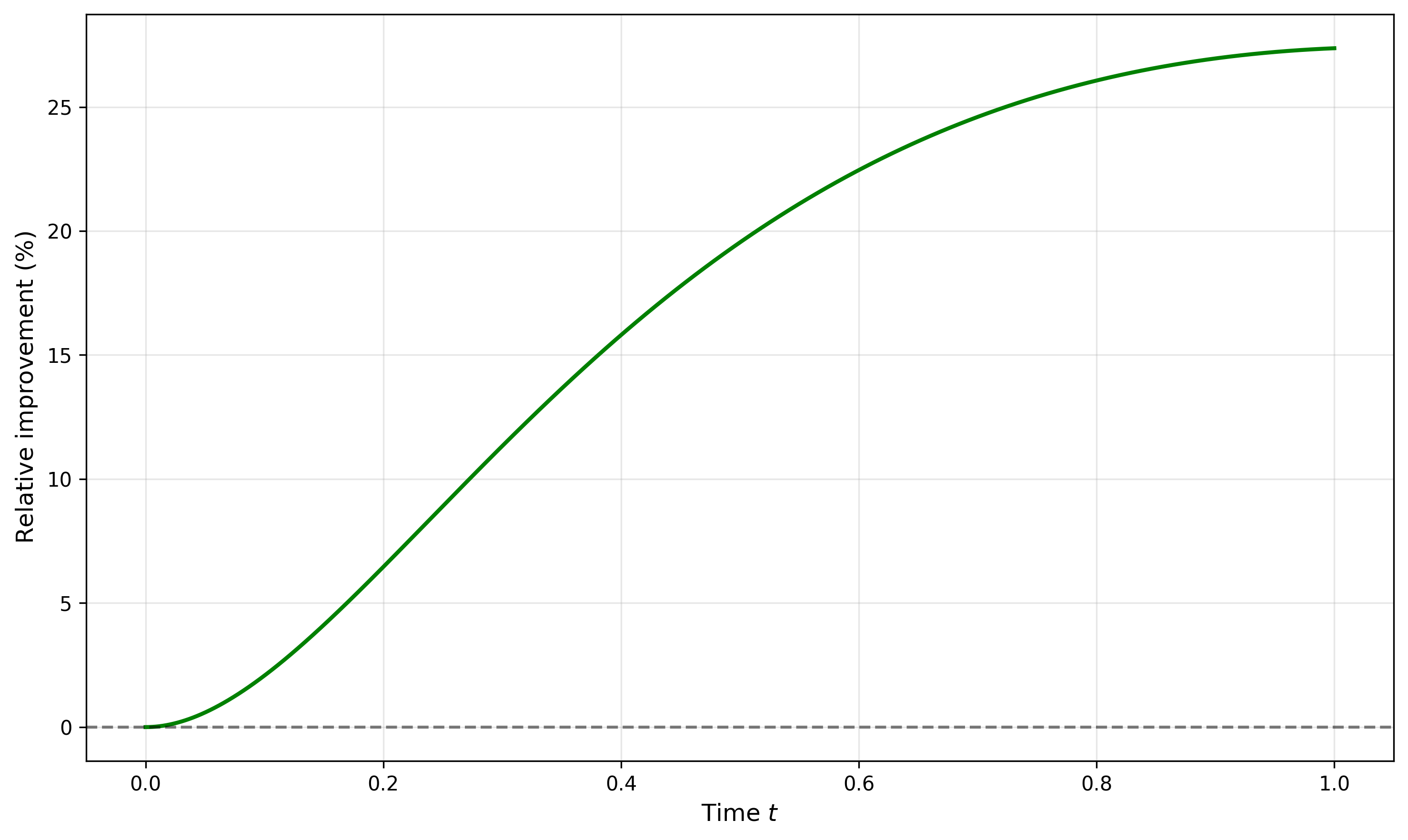}
	\caption{Relative improvement $\Delta(t)$ in tumor reduction due to optimal control intervention over time.}
	\label{fig:relative-improvement}
\end{figure}

\begin{figure}[H]
	\centering
	\includegraphics[width=0.5\textwidth]{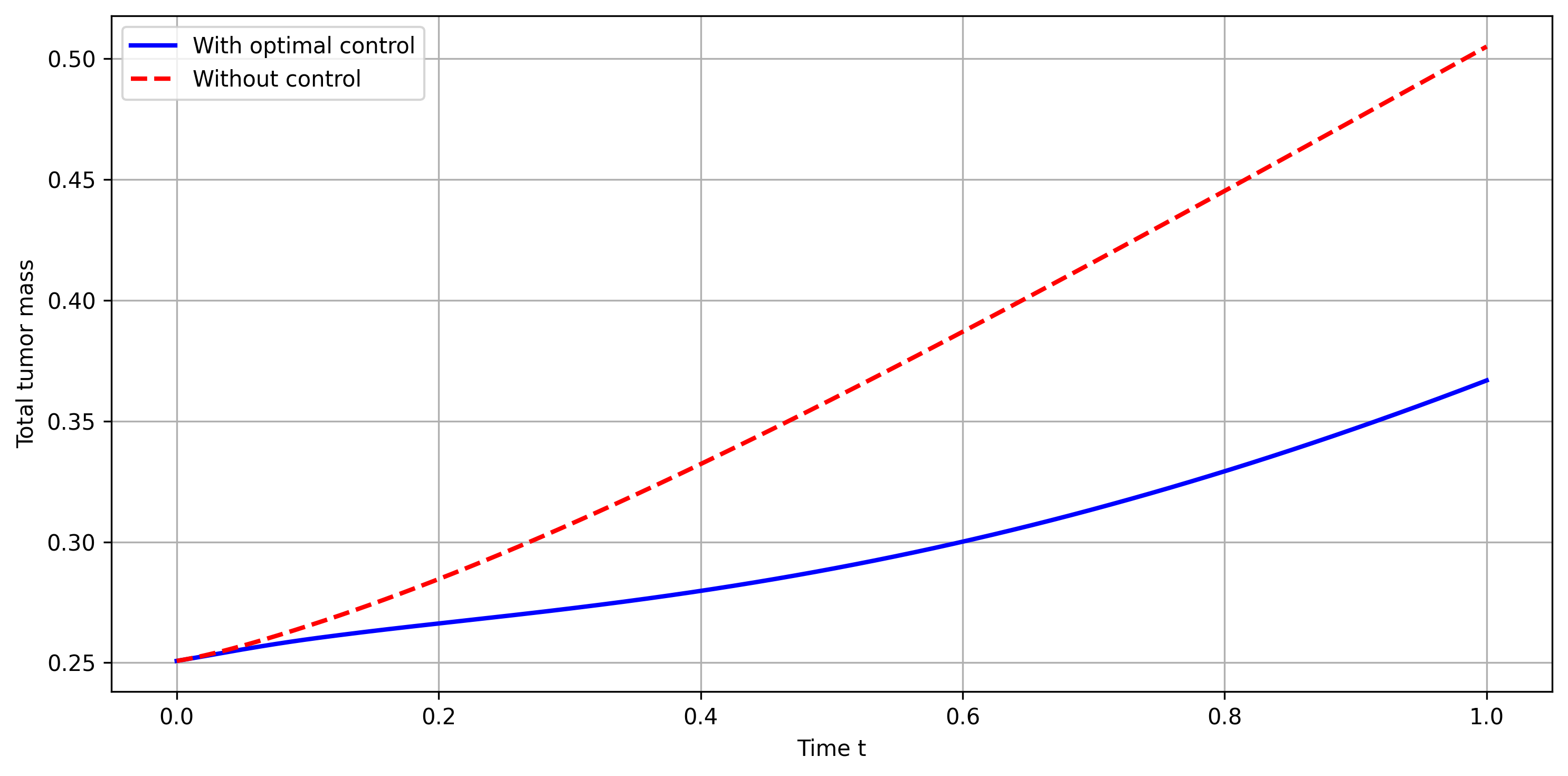}
	\caption{Comparison of tumor mass evolution with optimal control (solid blue) and without control (dashed red).}
	\label{fig:mass-comparison}
\end{figure}
	The numerical optimization procedure shows stable convergence characteristics, shown by the monotonic decrease in both the cost functional $J(I^k)$ and the gradient norm $\|\nabla J(I^k)\|_{L^2(0,T)}$ (Figure \ref{fig:convergence}). The algorithm was executed for a maximum of 50 iterations with the convergence criteria $|J(I^{k+1}) - J(I^k)|/|J(I^k)| < 5 \times 10^{-5}$.
\begin{figure}[H]
	\centering
	\begin{subfigure}{0.48\textwidth}
		\includegraphics[width=0.8\textwidth]{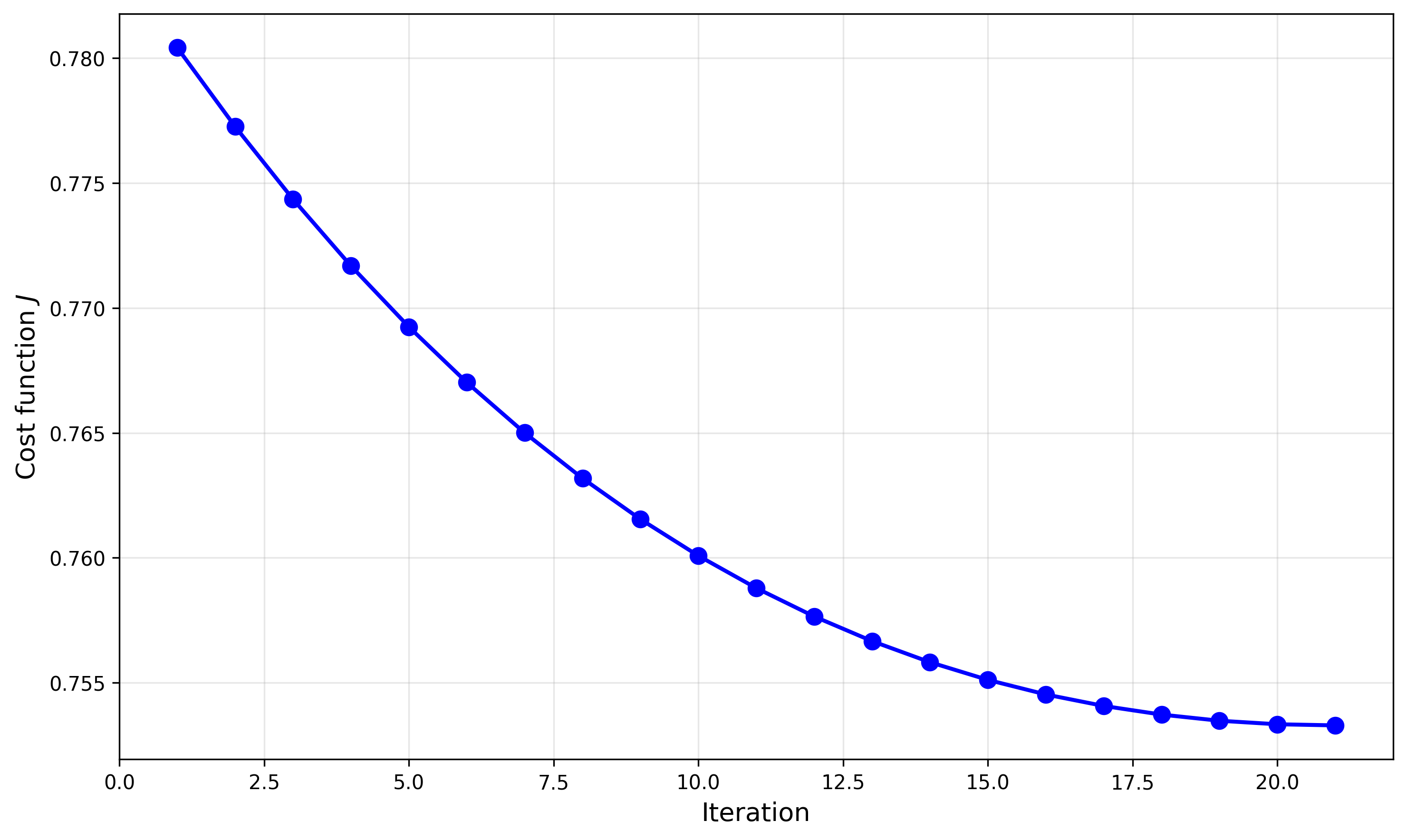}
		\caption{Cost function convergence}
	\end{subfigure}
	\hfill
	\begin{subfigure}{0.48\textwidth}
		\includegraphics[width=0.8\textwidth]{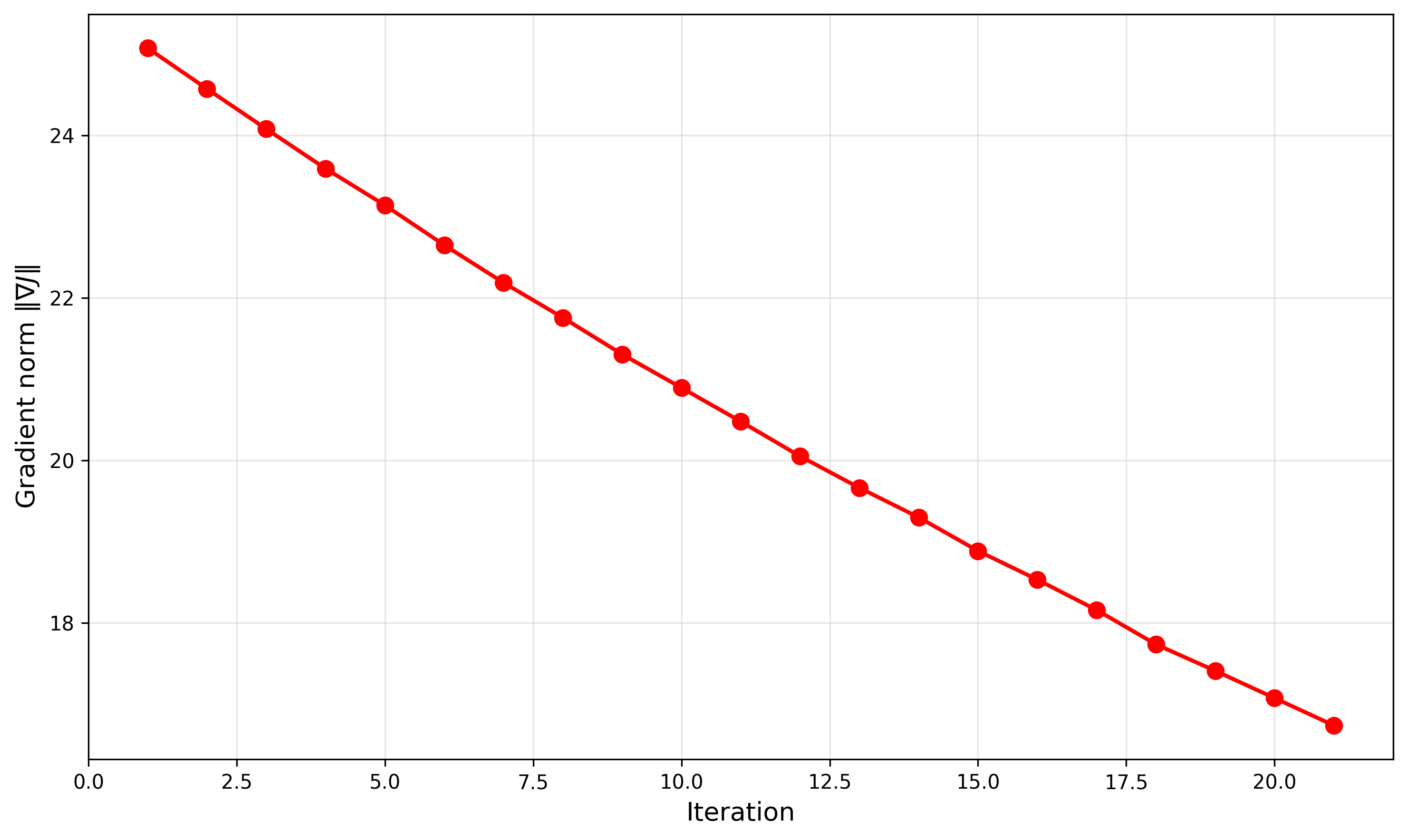}
		\caption{Gradient norm reduction}
	\end{subfigure}
	\caption{Evolution of optimization metrics across iterations: (a) monotonic decrease in cost functional $J(I^k)$ and (b) corresponding reduction in gradient norm $\|\nabla J(I^k)\|_{L^2(0,T)}$.}
	\label{fig:convergence}
\end{figure}
	
\begin{figure}[H]
	\centering
	\includegraphics[width=0.5\textwidth]{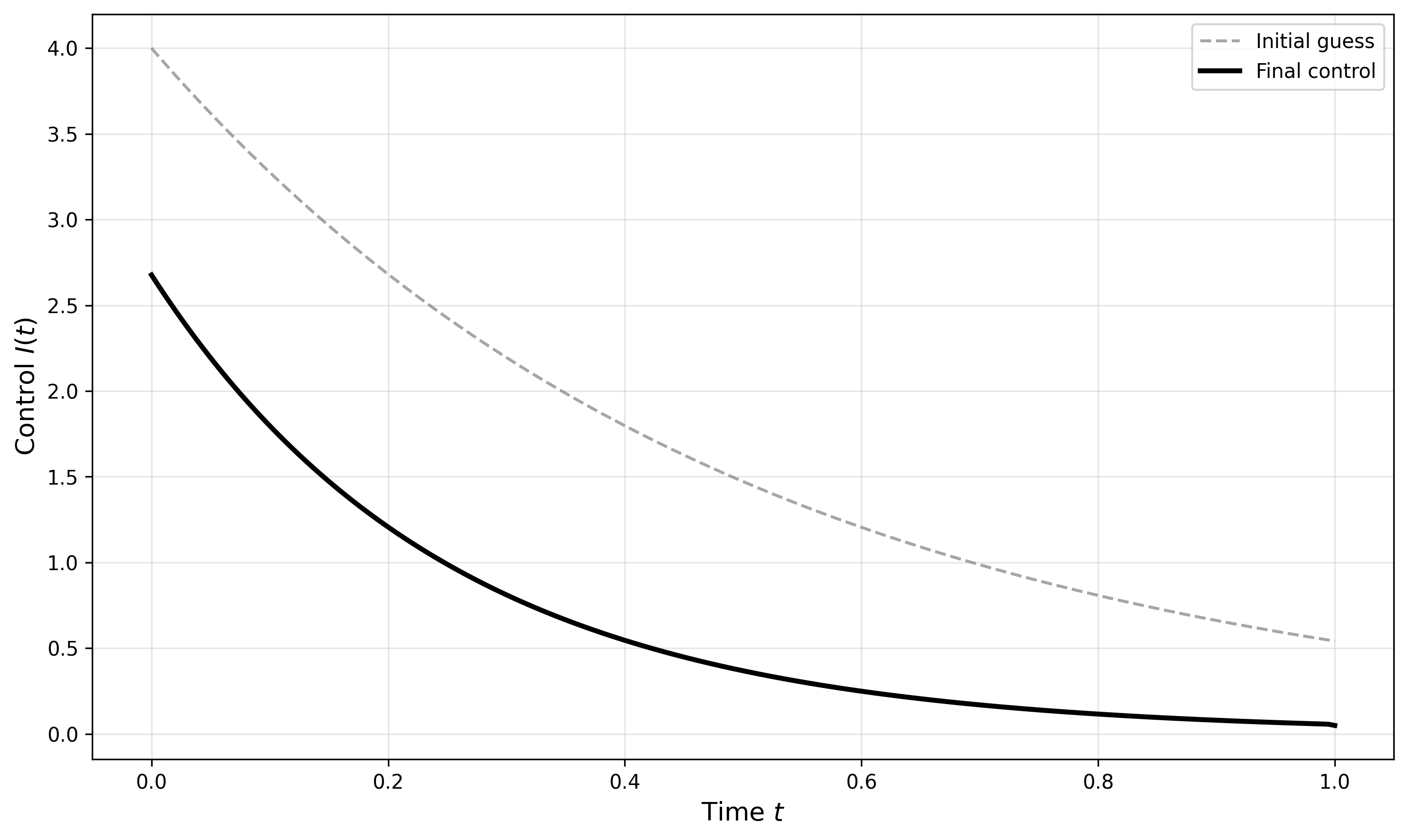}
	\caption{Evolution of control profile from initial guess (dashed gray) to optimal solution (solid black).}
	\label{fig:control-evolution}
\end{figure}
The initial control approximation, an exponential decay profile
$I^0(t) = M_{tol}e^{-4t}$, was chosen to align with the decreasing structure predicted by the necessary optimality conditions in equation \eqref{I_opt} while starting at the maximum tolerable dose. As illustrated in Figure \ref{fig:control-evolution}, this initial guess was progressively refined through optimization, converging to a profile with a steeper initial decrease and a smoother decrease.
\section*{Conclusion}
The development of effective cancer treatment strategies remains a critical challenge in modern medicine, requiring sophisticated mathematical frameworks that can capture the complex dynamics of tumor growth while providing practical guidance for therapy optimization. In this work, we have presented a mathematical analysis of a nonlocal tumor growth model incorporating chemotherapy, making some contributions to the theoretical understanding of optimal drug delivery.

Our analysis began by establishing a theoretical framework for studying nonlocal tumor growth models with chemotherapy, proving the existence and uniqueness of a solution under some assumptions. The incorporation of nonlocal terms, representing long-range cellular interactions, provides a more realistic model of tumor dynamics than traditional local approaches. We then developed an optimal control theory for this system, deriving the optimality conditions through a novel adjoint formulation that accounts for the nonlocal effects.

We obtained an explicit characterization of the optimal control strategy that provides practical guidance for drug dosing protocols. Our analysis reveals that optimal treatment schedules follow a precise structure determined by the interplay between tumor dynamics and drug effects. This structure, characterized by a switching function involving the adjoint state, suggests that effective chemotherapy regimens should adapt to both the current tumor state and its predicted evolution.

The mathematical techniques developed in this work extend beyond oncology, with potential applications in numerous biological systems where nonlocal interactions govern emergent behaviors. These include neural tissue dynamics, bacterial biofilm formation, embryogenesis, and even ecosystem population distributions where long-range interactions shape spatial patterns. Within cancer research specifically, several promising extensions warrant investigation. Future work could incorporate immune system responses by coupling our model with immune cell dynamics, allowing exploration of combined chemo-immunotherapy protocols. The framework could also be extended to account for tumor heterogeneity and phenotypic plasticity, including drug resistance mechanisms that evolve during treatment.
\section*{Acknowledgment}
This work was supported by the Centre National pour la Recherche Scientifique et Technique (CNRST) of Morocco and the Laboratoire de Mathématiques Pures et Appliquées (LMPA) at the Université du Littoral Côte d'Opale (ULCO), France.

	\bibliography{ref2}
	\bibliographystyle{abbrv}
	
	\include{annex}
	
\end{document}